\documentclass{article}

\usepackage{arxiv}

\usepackage{amsfonts}
\usepackage{mathrsfs}
\usepackage{amsmath}
\usepackage{amsthm}
\usepackage{amssymb}
\usepackage{setspace}
\usepackage{bbm}
\usepackage{dsfont}
\usepackage{enumerate}
\usepackage{mathtools}
\usepackage{hyperref}
\usepackage[color,matrix,arrow]{xy}
\usepackage[numbers,sort&compress]{natbib}

\usepackage[utf8]{inputenc} 
\usepackage[T1]{fontenc}    
\usepackage{hyperref}       
\usepackage{url}            
\usepackage{booktabs}       
\usepackage{amsfonts}       
\usepackage{nicefrac}       
\usepackage{microtype}      
\usepackage{lipsum}		

\numberwithin{equation}{section}

\theoremstyle{definition}
\newtheorem{thm}{Theorem}[section]
\newtheorem{lem}[thm]{Lemma}
\newtheorem{prop}[thm]{Proposition}
\newtheorem{cor}[thm]{Corollary}


\theoremstyle{definition}
\newtheorem{defn}{Definition}[section]

\newcommand{\R}{\mathbb{R}}
\newcommand{\N}{\mathbb{N}}

\newcommand{\Pb}{\mathbb{Pb}}
\newcommand{\E}{\mathbb{E}}
\newcommand{\C}{\mathbb{C}}
\newcommand{\HS}{\mathcal{H}}

\theoremstyle{remark}
\newtheorem*{rem}{Remark}

\title{On Azuma-type inequalities for Banach space-valued martingales}


\author{
  Sijie Luo\\
  Yau Mathematical Sciences Center \\
  Tsinghua University\\
  \texttt{luosijie@mail.tsinghua.edu.cn} \\
}

\begin{document}
\maketitle

\begin{abstract}
In this paper, we will study concentration inequalities for Banach space-valued martingales. Firstly, we prove that a Banach space $X$ is linearly isomorphic to a $p$-uniformly smooth space ($1<p\leq 2$) if and only if an Azuma-type inequality holds for $X$-valued martingales. This can be viewed as a generalization of Pinelis' work on Azuma inequality for martingales with values in $2$-uniformly smooth space. Secondly, Azuma-type inequality for self-normalized sums will be presented. Finally, some further inequalities for Banach space-valued martingales, such as moment inequalities for double indexed dyadic martingales and the De la Pe\~{n}a-type inequalities for conditionally symmetric martingales, will also be discussed.
\keywords{Azuma inequality \and conditionally symmetric martingales \and self-normalized sums \and uniformly smooth Banach spaces}
\end{abstract}

\section{Introduction}
Concentration inequalities for sums of independent random variables and their extension to martingales have been extensively studied by many authors, such as Bernstein (1927), Kolomogrov (1929), Bennett (1962), Hoeffding (1963), Azuma (1967) etc. To state one, we begin with the following Hoeffding-Azuma inequality, which was proved by Hoeffding \cite{Ho} for sums of
independent bounded random variables and was extended to martingales by Azuma \cite{Az}.

\begin{thm}[Hoeffding-Azuma]\label{Azuma inequality}
Let $(f_{j})_{j=0}^{n}$ be a real-valued martingale such that $|f_{j}-f_{j-1}|\leq a_{j}~a.s.$ for all $j=1,2,\cdots n$. Then, for all $r\geq 0$,
\begin{equation}\nonumber
\Pb\{|f_{n}-f_{0}|\geq r\}\leq 2\exp\left\{-\frac{r^{2}}{2(a_{1}^{2}+\cdots+a_{n}^{2})}\right\}.
\end{equation}
\end{thm}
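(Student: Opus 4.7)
The plan is to invoke the Chernoff--Cram\'er bound and exploit the martingale difference structure through conditioning. I would begin with the exponential Markov inequality: for any $\lambda>0$,
\[
\Pb\{f_n-f_0\geq r\}\leq e^{-\lambda r}\,\E[e^{\lambda(f_n-f_0)}].
\]
Writing $d_j=f_j-f_{j-1}$ for the martingale differences and using the tower property to peel off the last term gives
\[
\E[e^{\lambda(f_n-f_0)}]=\E\!\left[e^{\lambda\sum_{j=1}^{n-1}d_j}\,\E[e^{\lambda d_n}\mid \mathcal{F}_{n-1}]\right].
\]

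The key ingredient is Hoeffding's lemma: if $\E X=0$ and $|X|\leq a$ almost surely, then $\E[e^{\lambda X}]\leq e^{\lambda^2 a^2/2}$. I would establish this by convexity of $x\mapsto e^{\lambda x}$ on $[-a,a]$, which gives $e^{\lambda x}\leq \frac{a-x}{2a}e^{-\lambda a}+\frac{a+x}{2a}e^{\lambda a}$; taking expectations yields $\cosh(\lambda a)$, which in turn is bounded by $e^{\lambda^2 a^2/2}$ via the Taylor series. Since $\E[d_j\mid\mathcal{F}_{j-1}]=0$ and $|d_j|\leq a_j$ a.s., this estimate applies conditionally with $a=a_j$, and iterating downward from $j=n$ to $j=1$ produces
\[
\E[e^{\lambda(f_n-f_0)}]\leq \exp\!\left\{\tfrac{\lambda^2}{2}\sum_{j=1}^n a_j^2\right\}.
\]

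Optimizing $-\lambda r+\lambda^2\sum_{j=1}^{n}a_j^2/2$ over $\lambda>0$ by taking $\lambda=r/\sum_{j=1}^n a_j^2$ then yields the one-sided estimate
\[
\Pb\{f_n-f_0\geq r\}\leq \exp\!\left\{-\frac{r^2}{2\sum_{j=1}^n a_j^2}\right\}.
\]
Applying the identical argument to the conjugate martingale $(-f_j)_{j=0}^{n}$, whose differences satisfy the same bounds, and combining the two tail estimates via a union bound produces the two-sided conclusion. I do not expect any genuine obstacle: the only non-trivial step is Hoeffding's lemma, and even that reduces to a one-line convexity estimate. The structural content of the argument is merely that conditioning on $\mathcal{F}_{j-1}$ converts each martingale increment into a bounded mean-zero random variable, which is exactly the hypothesis needed for the scalar Hoeffding bound to propagate through the sum.
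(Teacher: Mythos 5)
Your proof is correct: it is the classical Chernoff--Cram\'er argument with Hoeffding's lemma applied conditionally to each increment, and every step (the convexity bound $e^{\lambda x}\leq \frac{a-x}{2a}e^{-\lambda a}+\frac{a+x}{2a}e^{\lambda a}$, the estimate $\cosh(\lambda a)\leq e^{\lambda^{2}a^{2}/2}$, the backward iteration via the tower property, the optimization $\lambda=r/\sum_j a_j^2$, and the union bound for the two-sided statement) is sound. The paper itself does not prove this theorem; it is quoted as a classical background result with citations to Hoeffding and Azuma, so there is no in-paper proof to compare against. It is worth noting, though, that the paper's own machinery in Section 3 (Theorem \ref{Hilbert valued martingale concentration inequality} specialized to $\HS=\R$ and $w_j=a_j$) reproves a strengthening of this statement by a different route: instead of bounding $\E e^{\lambda(f_n-f_0)}$ directly, it shows that $\cosh(\lambda|f_n-f_0|)/\prod_j e^{\lambda^2 a_j^2/2}$ is a nonnegative supermartingale and applies optional sampling, which yields the same constant $2$ but for the maximal function $\sup_j|f_j-f_0|$ rather than just the terminal value. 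Your argument is shorter and entirely elementary; the paper's buys the maximal inequality and generalizes to predictable dominating sequences and to smooth Banach spaces.
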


The Hoeffding-Azuma inequality now becomes a basic methodology for proving concentration inequalities for martingales with bounded differences. For some interesting extensions of the method of bounded differences we refer to the survey paper by Godbole and Hitczenko \cite{G-H}. On the other hand, the Hoeffding-Azuma inequality has been used as a powerful tool in various areas such as probability theory, graph theory, information theory, computer science and related fields (see e.g. \cite{{C-L},{Ha},{Mc},{R-S}}).

Recently, motivated by the study of random matrices, concentration inequalities for sums of independent matrix-valued random variables and their extension to matrix-valued martingales have attracted more and more attention. Numerous concentration inequalities have been extended to random variables with values in the space of matrices equipped with operator norm. Several work in this direction have been done by Oliveira \cite{Ol} Ahlswede and Winter \cite{A-W}, Tropp \cite{Tr1} etc. With the help of the non-commutative generalizations of classical concentration inequalities, fundamental theorems of random matrices have been established successfully. In the matrix-valued setting, deep theory from operator theory, such as the Lieb's concavity theorem \cite{Li} and the Golden-Thompson inequality, will be used to derived concentration inequalities. For further information on matrix-valued concentration inequalities we refer to \cite{Tr2} and the extensive references therein.

Motivated by the study of probability in Banach spaces, it is natural to consider concentration inequalities for Banach space-valued random variables. However, while dealing inequalities in this setting, the main obstacle for proving such inequalities is that the moment generating function and the cumulant generating function methods are not available. To overcome the difficulty, some new techniques must be needed and it will be no surprised that the techniques are not from probabilistic but come from analysis. A great deal of effort has been expended on the study of moment inequalities and integrability of sums of independent Banach space-valued random variables and remarkable results have been established by Ledoux, Talagrand, Maurey, Pisier and many others (see e.g.\cite{{L-T},{Pis2}}). However, to the best of our knowledge, the study of concentration inequalities for Banach space-valued random variables is not so widely explored.

In his fundamental work, Naor \cite[Theorem 1.5]{Na} established the following Azuma inequality for martingales with values in $p$-uniformly smooth Banach space ($1<p\leq 2$) and then he applied the inequality to obtain an improved estimate of the Alon-Roichman theorem for random Cayley graphs of Abelian groups. Before stating the Naor's result precisely, we shall now recall the \emph{$p$-uniform smoothness} of Banach spaces.

For a Banach space $X$, the following quantity is called the {\emph {modulus of uniform smoothness}} of $X$, for $\tau>0$
\begin{equation}\label{uniform smoothness}
\rho_{X}(\tau)=\sup\left\{\frac{\|x+\tau y\|+\|x-\tau y\|}{2}-1:x,~y\in X,~\|x\|=\|y\|=1  \right\}.
\end{equation}
Then $X$ is said to be \emph{uniformly smooth} if $\lim\limits_{\tau\to 0^{+}}\frac{\rho_{X}(\tau)}{\tau}=0$ and if moreover there exists a constant $s>0$ such that for all $\tau>0$ we have $\rho_{X}(\tau)\leq s \tau^{p}$, then $X$ is said to be \emph{$p$-uniformly smooth}. It is clear that the $p$-uniform smoothness are only available for $p\in(1,2]$ and, by the parallelogram law (i.e. $|x+y|^{2}+|x-y|^{2}=2(|x|^{2}+|y|^{2})$ for all $x,~y\in \HS$), Hilbert space $(\HS,|\cdot|)$ is of $2$-uniformly smooth.

The following theorem, due to Naor, is an Azuma-type inequality for Banach space-valued martingales.
\begin{thm}(\cite{Na})\label{Naor's Azuma inequality}
There exists a universal constant $c\in(0, \infty)$ with the following property. Fix $s>0$ and assume that a Banach space $(X,\|\cdot\|)$ satisfies $\rho_{X}(\tau)\leq s\tau^{2}$ for all $\tau>0$. Fix also a sequence of positive numbers $(a_{j})_{j=1}^{\infty}\subseteq (0,\infty)$. Let $(f_{j})_{j=1}^{\infty}$ be an $X$-valued martingale satisfying $\|f_{j}-f_{j-1}\|\leq a_{j}~a.s.$ for all $j\in \N$. Then, for every $r\geq 0$ and $n\in \N$, we have

\begin{equation}\label{Naor's inequality}
\Pb\{\|f_{n}-f_{0}\|\geq r\}\leq e^{s+2}\cdot\exp\left\{-\frac{cr^{2}}{a_{1}^{2}+\cdots+a_{n}^{2}} \right\}.
\end{equation}
\end{thm}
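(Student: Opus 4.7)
The plan is to follow Pinelis's classical strategy for Hilbert-space-valued martingales, adapted to the 2-uniformly smooth Banach space setting. Since no scalar moment generating function is available on $X$, the idea is to replace the MGF with a carefully chosen nonnegative function $\Phi_{\lambda}:X\to\R$ that behaves in a supermartingale-like fashion along $(f_n)$, and then conclude by a Markov-type inequality applied to $\cosh$.

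The first step is to unpack the hypothesis $\rho_{X}(\tau)\leq s\tau^{2}$ into the operational algebraic inequality
\begin{equation*}
\|x+y\|^{2}+\|x-y\|^{2}\leq 2\|x\|^{2}+K(s)\,\|y\|^{2}\qquad (x,y\in X),
\end{equation*}
i.e.\ the classical statement that the squared norm of a 2-smooth space is itself 2-smooth. Applied to the martingale difference $d_j=f_j-f_{j-1}$, which satisfies $\E[d_j\mid \mathcal F_{j-1}]=0$ and $\|d_j\|\leq a_j$ a.s., a single conditional expectation immediately yields
\begin{equation*}
\E\bigl[\|f_{j-1}+d_j\|^{2}\,\big|\, \mathcal F_{j-1}\bigr]\leq \|f_{j-1}\|^{2}+K(s)\,a_j^{2}.
\end{equation*}

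The heart of the argument, and the step I expect to be hardest, is to upgrade this $L^{2}$ estimate to an exponential one. I would introduce $\Phi_{\lambda}(x)=\cosh\bigl(\lambda\sqrt{\|x\|^{2}+c}\bigr)$, the shift by a small constant $c>0$ serving to smooth out the non-differentiability of the norm at the origin, and aim for the one-step recursion
\begin{equation*}
\E\bigl[\Phi_{\lambda}(f_{j}-f_{0})\,\big|\, \mathcal F_{j-1}\bigr]\leq \Phi_{\lambda}(f_{j-1}-f_{0})\cdot \exp\bigl(C(s)\lambda^{2}a_{j}^{2}\bigr).
\end{equation*}
This should follow by Taylor-expanding $\cosh$ as an even power series in $\lambda$, controlling each term through an iterated form of the 2-smoothness inequality, and using the centering $\E[d_j\mid \mathcal F_{j-1}]=0$ to kill the odd-order contributions.

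Iterating the recursion over $j=1,\dots,n$ gives $\E\Phi_{\lambda}(f_n-f_0)\leq \Phi_{\lambda}(0)\exp\bigl(C(s)\lambda^{2}\sum_j a_j^{2}\bigr)$, and then Markov's inequality in the form
\begin{equation*}
\Pb\bigl(\|f_n-f_0\|\geq r\bigr)\leq \frac{\E\Phi_{\lambda}(f_n-f_0)}{\cosh(\lambda r)},
\end{equation*}
followed by optimization in $\lambda>0$ (taking $\lambda$ proportional to $r/\sum a_j^{2}$), produces the desired Gaussian tail. Tight bookkeeping of the constants $K(s)$, $C(s)$ and of the initial value $\Phi_{\lambda}(0)$ should deliver the explicit prefactor $e^{s+2}$ rather than a vague universal constant, which is really the crux of producing Naor's clean statement.
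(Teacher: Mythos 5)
Your overall strategy (a $\cosh$-based supermartingale in the spirit of Pinelis) is a legitimate way to obtain an Azuma-type bound on a $2$-uniformly smooth space --- it is in fact the route this paper takes for its own Theorem~\ref{predictable sequence dominated case} --- but it is not how the stated theorem is proved, and more importantly it cannot produce the inequality in the stated form. The recursion $\E_{j-1}\Phi_\lambda(f_j-f_0)\le \Phi_\lambda(f_{j-1}-f_0)\,e^{C(s)\lambda^2a_j^2}$, iterated and optimized in $\lambda$, yields a bound of the shape $2\exp\{-r^2/(4C(s)\sum_j a_j^2)\}$: the smoothness parameter $s$ unavoidably sits inside the exponent, as part of the variance proxy. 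Naor's statement has a different shape: the constant $c$ in the exponent is universal and the entire $s$-dependence is confined to the prefactor $e^{s+2}$. These two forms are not interconvertible (for $r^2/\sum_j a_j^2\gg s$ Naor's bound is strictly stronger, while for $s$ large with $r$ fixed it is weaker), so no amount of ``tight bookkeeping'' in your scheme will deliver the prefactor $e^{s+2}$ together with a universal $c$. The actual argument is the moment method sketched around \eqref{lemma for 2-uniformly smooth spaces}--\eqref{Naor's lemma}: one first proves $(\E\|f_n-f_0\|^q)^{1/q}\le 8\sqrt{s+q}\,\big(\sum_{j}(\E\|d_{j,f}\|^q)^{2/q}\big)^{1/2}$ for every $q\ge 2$, using the pointwise inequality $\tfrac{\|x+y\|^q+\|x-y\|^q}{2}\le(\|x\|^2+8(s+q)\|y\|^2)^{q/2}$, then applies Markov's inequality at level $q$ and optimizes over $q\ge 2$; the prefactor $e^{s+2}$ is precisely the price of the constraint $q\ge 2$ combined with the $\sqrt{s+q}$ growth of the moment constant.

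Two steps inside your sketch are also genuine gaps. First, the symmetric inequality $\|x+y\|^2+\|x-y\|^2\le 2\|x\|^2+K(s)\|y\|^2$ together with $\E[d_j\mid\mathscr F_{j-1}]=0$ does not ``immediately'' give $\E[\|f_{j-1}+d_j\|^2\mid\mathscr F_{j-1}]\le\|f_{j-1}\|^2+K(s)a_j^2$: discarding $\|x-d_j\|^2\ge(\|x\|-\|d_j\|)^2$ leaves a cross term $2\|x\|\,\|d_j\|$ that centering does not kill. One needs the one-sided formulation $\|x+y\|^2\le\|x\|^2+2\langle J_2(x),y\rangle+c\|y\|^2$ (Theorem~\ref{equivalent smooth and dual mapping}), whose \emph{linear} term is what the martingale property annihilates; the symmetric version suffices only for conditionally symmetric increments. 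Second, the one-step exponential recursion is the entire difficulty of Pinelis's theorem and does not follow from a termwise Taylor expansion of $\cosh$: controlling $\E\|x+d\|^{2k}$ for each $k$ separately would require higher-order smoothness information you do not have. The working argument is a second-order differential inequality for $t\mapsto\E\cosh(\lambda\|x+td\|)$, as in the proof of Theorem~\ref{Hilbert valued martingale concentration inequality} here (via the parallelogram law in the Hilbert case, and via the smoothing argument of \cite[Lemma 2.2]{Pin2} in the general $2$-smooth case).
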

As stated in \cite[p. 626]{Na}, Naor further pointed out: ``\emph{ All our results carry over, with obvious modifications, to general uniformly smooth spaces (of course, in this more general setting, the probabilistic bounds that we get will no longer sub-Gaussian)}''. Indeed, the main ingredient for proving \eqref{Naor's inequality} is the \cite[Lemma 2.2]{Na}, which sates that if a $2$-uniformly smooth space $X$ with the modulus of uniform smoothness satisfies that $\rho_{X}(\tau)\leq s \tau^{2}$ for all $\tau>0$, then, for every $x$, $y\in X$ and every $q\geq 2$ we have
\begin{equation}\label{lemma for 2-uniformly smooth spaces}
\frac{\|x+y\|^{q}+\|x-y\|^{q}}{2}\leq (\|x\|^{2}+8(s+q)\|y\|^{2})^{q/2}.
\end{equation}
The proof of this lemma can be generalized to $p$-uniformly smooth spaces by replacing $2$ of \eqref{lemma for 2-uniformly smooth spaces} with $p$ for all $q\geq p$. Combining with this modification and the Naor's proof of Theorem \ref{Naor's Azuma inequality}, one could deduce the corresponding Azuma-type inequality for $p$-uniform smooth space-valued martingales ($1<p\leq 2$) by replacing $r^{2}$ and $a_{1}^{2}+\cdots+a_{n}^{2}$ on the right hand side of \eqref{Naor's inequality} by $r^{p}$ and $a_{1}^{p}+\cdots+a_{n}^{p}$ respectively.

However, constants in \eqref{Naor's inequality} are worse than those in \eqref{Azuma inequality} and it seems that Naor's method could not lead to better constants even for Hilbert space-valued martingales or certain sub-classes of martingales. Hence, a natural question we consider most is about providing a more precise version of the Azuma-type inequality for martingales with values in $p$-uniformly smooth space $X$, such that, when the Banach space $X$ is a Hilbert space, our results recover the classical Azuma inequality. On the other hand, it is also worthwhile to ask that whether the $p$-uniform smoothness is necessary for Azuma-type inequality to hold. All these questions are natural and worth considering, which could be stated as follows.
\begin{itemize}
\item Can we provide an Azuma-type inequality for Banach space-valued martingales with some improvements?
\item Is the $p$-uniform smoothness of the image space necessary for Azuma-type inequality?
\end{itemize}

To answer these questions we found that some improvements of Naor's work must be needed. However, as it will be seen below, the Naor's proof is based on some geometric arguments from Banach space theory and estimation of an implicity constant in Pisier's inequality \cite{Pis1}, both of which are hard to be improved. Precisely, Naor proved that for martingale $(f_{j})_{j=0}^{\infty}$ taking values in $2$-uniformly smooth space $X$, then there exists a constant $s>0$ only depending on $X$ such that for every $q\geq 2$ and $n\in \N$ the following inequality holds

\begin{equation}\label{Naor's lemma}
\big{(}\E\|f_{n}-f_{0}\|^{q}\big{)}^{\frac{1}{q}}\leq 8\sqrt{s+q}\cdot\large\sqrt{\sum\limits_{j=1}^{n}(\E\|d_{j,f}\|^{q})^{\frac{2}{q}}},
\end{equation}
where $d_{j,f}=f_{j}-f_{j-1}$ for $j\in \N$. Then, the optimal choice of $q\geq 2$ in \eqref{Naor's lemma} will yield the desired inequality. However, in order to make optimal choice of $q\geq 2$ of \eqref{Naor's lemma} to derive Azuma inequality, one needs to involve the $\ell_{\infty}$-norm of the martingale differences $d_{j,f}$. And it seems that even for certain sub-classes of martingales (such as conditionally symmetric martingales) the \eqref{Naor's lemma} could not lead to a better inequality.

To tackle the problem, our approach is based on a method of Pinelis, who have established some fundamental inequalities for $2$-uniformly smooth space-valued martingales (see \cite{{Pin1},{Pin2}}) and then we apply a ``martingale dimension reduction'' argument (inspired by \cite[Lemma 4.2]{D-L-P}) to reduce martingales from $p$-uniformly smooth space-valued to $\R^{2}$-valued. More precisely, we prove the following theorem (corresponds to Corollary \ref{refinement of Azuma inequality})
\begin{thm}
Let $X$ be a $p$-uniformly smooth space ($1<p\leq 2$) and $f=(f_{j})_{j=0}^{\infty}$ be an $X$-valued martingale such that there exists a non-negative predictable sequence $w=(w_{j})_{j=1}^{\infty}$ satisfying $\|f_{j}-f_{j-1}\|\leq w_{j}~a.s.$ for all $j\in \N$. Then, there exists a constant $K$ only depending on $X$ such that for all $r\geq 0$,
\[
\Pb\{f^{*}\geq r\}\leq 2\exp\left\{-\frac{r^{p}}{2K\Big{\|}\sum_{j=1}^{\infty}w_{j}^{p}\Big{\|}_{\infty}} \right\},
\]
where $f^{*}=\sup\limits_{j\in \N}\|f_{j}-f_{0}\|$.
\end{thm}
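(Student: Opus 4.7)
The plan is to implement the strategy outlined in the introduction: combine Pinelis' exponential-supermartingale technique with a two-dimensional reduction inspired by \cite[Lemma 4.2]{D-L-P}. The analytic starting point is Pisier's two-point characterization of $p$-uniform smoothness: $\rho_X(\tau)\leq s\tau^p$ implies, up to a universal adjustment of constants,
\[
\|x+y\|^p+\|x-y\|^p\leq 2\|x\|^p+2K\|y\|^p,\qquad x,y\in X,
\]
with $K=K(X,p)$. Because this inequality involves only two vectors at a time, it encodes a genuinely two-dimensional geometric condition and motivates the reduction.

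The first substantive step is the reduction itself. At step $j$, the current value $u_{j-1}=f_{j-1}-f_0$ and the increment $d_j$ span a subspace of $X$ of dimension at most two, and both $\|u_j\|$ and the smoothness inequality applied to $(u_{j-1},d_j)$ are invariant under linear isometries of that subspace into $(\R^2,\|\cdot\|_*)$ for a suitable norm inheriting the smoothness constant $K$. Assembling these embeddings step-by-step produces an $\R^2$-valued companion martingale $(\tilde f_n)$ with $\|\tilde d_j\|_*\leq w_j$ and $\|f_n-f_0\|\leq\|\tilde f_n-\tilde f_0\|_*$ a.s., thereby reducing the desired tail bound on $f^*$ to one for the corresponding $\R^2$-valued process.

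In this concrete two-dimensional setting I would then construct an exponential supermartingale
\[
M_n=\phi(\|\tilde f_n-\tilde f_0\|_*)\exp\Big(-\kappa\lambda^{\alpha}\sum_{j\leq n}w_j^p\Big)
\]
for an even convex function $\phi$ and exponents $\kappa,\alpha>0$ to be chosen according to $p$. The crux is the one-step estimate
\[
\E[\phi(\|\tilde f_j-\tilde f_0\|_*)\mid\mathcal F_{j-1}]\leq\phi(\|\tilde f_{j-1}-\tilde f_0\|_*)\exp(\kappa\lambda^{\alpha}w_j^p),
\]
which follows from the two-point smoothness inequality together with a Rademacher-type symmetrization of the conditional law of $\tilde d_j$. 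Doob's maximal inequality applied to $M_n$, combined with the a.s.\ bound $\sum_j w_j^p\leq\|\sum_j w_j^p\|_\infty$, then yields the stated tail inequality after optimizing in $\lambda>0$.

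The principal obstacle is the design of $\phi$ for general $p\in(1,2]$. For $p=2$, Pinelis' $\phi(t)=\cosh(\lambda t)$ works because the identity $\cosh(\sqrt{a^2+b^2})\leq\cosh(a)\,e^{b^2/2}$ dovetails cleanly with the parallelogram-like $2$-smoothness inequality. For $p<2$ one must find a substitute with an analogous decoupling in $p$-norms, which seems to require either a regularization of $e^{\lambda t^p}$ near the origin or a piecewise construction; verifying the supermartingale property will then rest on an elementary but delicate one-dimensional calculation. It is precisely the two-dimensional reduction that makes this design problem tractable, since after reduction all of the key computations occur in the explicit setting $(\R^2,\|\cdot\|_*)$.
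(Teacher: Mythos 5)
Your high-level plan (Pinelis-type exponential supermartingale plus a Ding--Lee--Peres-style reduction to $\R^{2}$) is the right one, but there are two genuine gaps, and the second is exactly the point where the paper's argument does something you have missed. First, the reduction you describe --- isometrically embedding, at each step, the two-dimensional subspace spanned by $f_{j-1}-f_{0}$ and $d_{j,f}$ into $(\R^{2},\|\cdot\|_{*})$ --- does not assemble into a single $\R^{2}$-valued martingale: the two-dimensional section changes from step to step, so there is no one norm $\|\cdot\|_{*}$ and no consistent way to glue the embeddings; moreover your intended comparison $\|f_{n}-f_{0}\|\le\|\tilde f_{n}-\tilde f_{0}\|_{*}$ with $\|\tilde d_{j}\|_{*}\le w_{j}$ preserves the exponent $p$, so after the reduction you would still face a $p$-smooth concentration problem in the plane. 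That is why you are left with the unresolved ``design of $\phi$ for $p<2$'' --- which is not a technicality but the missing core of the proof as you have set it up. The paper's reduction is not an isometry but an inequality-preserving construction: using the subdifferential form of $p$-uniform smoothness, $\|x+y\|^{p}\le\|x\|^{p}+p\langle J_{p}(x),y\rangle+c\|y\|^{p}$, one builds a \emph{Euclidean} $\R^{2}$-valued martingale $N=(N_{j})$ on an enlarged filtration whose increment at step $j$ has a radial component reproducing the linear term $p\langle J_{p}(f_{j-1}),d_{j,f}\rangle$ (suitably truncated on $\{w_{j}^{p}\le|N_{j-1}|^{2}\}$) and an orthogonal component carrying an independent Rademacher sign of size $\sqrt{c+p}\,w_{j}^{p/2}$. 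This yields $\|f_{j}-f_{0}\|^{p}\le|N_{j}-N_{0}|^{2}$ and $|N_{j}-N_{j-1}|^{2}\le Kw_{j}^{p}$; the exponent has been converted from $p$ to $2$, so the target inequality for $N$ is the ordinary Hilbert-space Azuma bound with $\phi=\cosh$, and no new $\phi$ is needed.

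Second, your one-step supermartingale estimate relies on the symmetric two-point inequality $\|x+y\|^{p}+\|x-y\|^{p}\le2\|x\|^{p}+2K\|y\|^{p}$ together with ``a Rademacher-type symmetrization of the conditional law of $\tilde d_{j}$.'' The theorem is stated for general martingales, not conditionally symmetric ones, and symmetrizing the increments changes the process in a way that does not transfer tail bounds on the maximal function back to the original martingale. The correct mechanism is the one-sided inequality with the $J_{p}$ term: that linear term vanishes under $\E_{j-1}$ by the martingale property, with no symmetry assumption. (The Rademacher variables in the paper play a different role: they make the orthogonal component of $d_{j,N}$ a genuine martingale difference.) To repair your argument you would need to replace both the embedding step and the two-point inequality by this construction, at which point you recover the paper's proof.
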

This theorem is an improvement of Naor's Azuma inequality for martingales with values in $p$-uniformly smooth space ($1<p\leq 2$). By the method of ``\emph{good-$\lambda$ inequality}'' of Burkholder, we can further deduce that the $p$-uniform smoothness of the image space is necessary for Azuma-type inequalities to hold. Moreover, we found that some other types of inequalities, such as Azuma-type inequality for self-normalized sums, De la Pe\~{n}a inequalities etc. are also true for $p$-uniformly smooth space-valued martingales.

Our paper is organized as follows.

In Section 2 (the next section), preliminary results on martingales theory and geometric properties of uniformly smooth spaces will be collected.

In Section 3, we will focus on the Azuma-type inequalities for martingales with values in $p$-uniformly smooth space. For example, we prove that a martingale $f=(f_{j})_{j=0}^{\infty}$ with values in $p$-uniformly smooth space, then an Azuma-type inequality holds for $f=(f_{j})_{j=0}^{\infty}$. This can be viewed as an improvement of Naor's Azuma inequality \eqref{Naor's Azuma inequality} and a generalization of Pinelis' Azuma-type inequality for martingales with values in $2$-uniformly smooth space (see \cite{Pin1}). Furthermore, by applying the ``good-$\lambda$ inequality'', we deduce that the $p$-uniform smoothness of the image space is necessary for this type of inequality to hold. Hence, we obtain a characterization of $p$-uniformly smooth spaces in terms of Azuma-type inequalities. At the end of this section, Azuma-type inequality for self-normalized sums will also be discussed.

Section 4 will contain some further inequalities for Banach space-valued martingales. In particular, moment inequalities for Banach space-valued double dyadic martingales and the De la Pe\~{n}a-type inequalites for $p$-uniformly smooth space-valued conditionally symmetric martingales will be presented.

Throughout this paper, $X$ (resp. $\HS$) stands for separable Banach (resp. Hilbert) space and $X^*$ is the dual space of $X$. For constants $K_{1}$, $K_{2}$,  the notation $K_{1}=O( K_{2})$ we mean that $K_{1}\leq D K_{2}$ with some universal constant $D>0$. We assume that $\inf\emptyset=\infty$ and $\prod_{j\in \emptyset}u_{j}=1$ for convenience. For random variables $f$, $g$ on some probability space, we will simply use ``$f\leq g$'' to stand that ``$f\leq g$ almost surely''. For $p\geq 1$, the notation $(L^{p}(X),\|\cdot\|_{p})$ stands for the space of all $p$-Bochner integrable $X$-valued functions equipped with the $L^{p}$-norm, that is, $\|f\|_{p}=\big{(}\int \|f(\omega)\|^{p}d\Pb(\omega) \big{)}^{1/p}$ for all $f\in L^{p}(X)$. For a random process $f=(f_{j})_{j=0}^{\infty}$ we simply denote $\sigma(f_{0},\cdots,f_{n})$ the $\sigma$-algebra generated by $f_{0},\cdots,f_{n}$.

\section{Preliminaries}
This section will be divided into two parts. The first part includes basic concepts and notations from martingale theory. The second part is a digression to Banach space theory.

\subsection{Basic concepts and results from martingale theory}

The following concepts can be found in any books on probability theory (see e.g. \cite{Kl}). A sequence of sub-$\sigma$-algebras $(\mathscr{F}_{j})_{j=0}^{\infty}$ of a probability space $(\Omega,\mathscr{F},\Pb)$ is said to be a filtration if 
$\mathscr{F}_{j-1}\subseteq \mathscr{F}_{j}$ for all $j\in \N$. A sequence $(f_{j})_{j=0}^{\infty}$ is called adapted to $(\mathscr{F}_{j})_{j=0}^{\infty}$ if $f_{j}$ is $\mathscr{F}_{j}$-measurable for each $j\in\N\cup\{0\}\coloneqq \N_{0}$. A random variable $\tau:(\Omega,\mathscr{F},\Pb)\to \N\cup\{\infty\}$ is called a stopping time \big{(}relatively to the filtration $(\mathscr{F}_{j})_{j=0}^{\infty}$\big{)} if $\{\tau=n\}\in\mathscr{F}_{n}$ for each $n\in \N$. Throughout this paper we assume that $\mathscr{F}_{0}=\big{\{}\{\emptyset\}, \Omega \big{\}}$ for convenience.

A sequence of $X$-valued Bochner-integrable random process $(f_{j})_{j=0}^{\infty}$ is called a martingale relatively to the filtration $(\mathscr{F}_{j})_{j=0}^{\infty}$, if $(f_{j})_{j=0}^{\infty}$ is adapted relatively to the filtration $(\mathscr{F}_{j})_{j=0}^{\infty}$ such that $\E(f_{j}|\mathscr{F}_{s})=f_{s}$ for all $s\leq j$ and $j\in\N$. If it does not cause any confusion, we shall simply call $(f_{j})_{j=0}^{\infty}$ a martingale and write $\E_{j}=\E(\cdot|\mathscr{F}_{j})$ for short. If $\E(\|f_{j}\|^{p})<\infty$ for all $j\in \N$ ($p\geq 1$) then $(f_{j})_{j=0}^{\infty}$ is said to be a $L^{p}$-martingale.

The following theorem is well known fact in probability theory called the ``\emph{Optional Sampling Theorem}''(see \cite[p. 209]{Kl}). Recall that a real-valued adapted random process $f=(f_{j})_{j=0}^{\infty}$ is called  sub-martingale (resp. super-martingale) if $f_{s}\leq \E_{s}(f_{j})$ (resp. $f_{s}\geq \E_{s}(f_{j})$) for all $s\leq j$.
\begin{thm}[Optional Sampling Theorem]\label{optional sampling theorem}
Let $f=(f_{j})_{j=0}^{\infty}$ be super-martingale and $\sigma\leq\tau$ be stopping times.
\begin{enumerate}[i)]
\item Assume there exists $T\in\N$ such that $\tau\leq T~a.s.$. Then
\begin{equation}\nonumber
f_{\sigma}\geq \E(f_{\tau}|\mathscr{F}_{\sigma}),
\end{equation}
and if $f$ is a martingale then the equality holds.

\item If $f$ is nonnegative and $\tau< \infty~a.s.$, then we have $\E(f_{\tau})\leq \E(f_{0})<\infty$,
$\E(f_{\sigma})\leq\E(f_{0})<\infty$ and $f_{\sigma}\geq \E(f_{\tau}|\mathscr{F}_{\sigma})$.

\item Assume that, more generally, $f$ is only adapted and integrable. Then $f$ is a martingale if and only if $\E(f_{\tau})=\E(f_{0})$ for any bounded stopping time $\tau$.
\end{enumerate}
\end{thm}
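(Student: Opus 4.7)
The plan is to derive all three parts from a single master identity obtained by telescoping. For any stopping time $\tau$ bounded above by some $T\in\N$, write
\[
f_\tau - f_0 = \sum_{j=1}^{T} \mathbf{1}_{\{\tau \geq j\}}(f_j - f_{j-1}),
\]
where the indicator is $\mathscr{F}_{j-1}$-measurable because $\{\tau \geq j\} = \{\tau \leq j-1\}^{c}\in\mathscr{F}_{j-1}$. Conditioning each summand on $\mathscr{F}_{j-1}$ and applying the supermartingale property $\E_{j-1}(f_j - f_{j-1})\leq 0$ yields $\E(f_\tau)\leq \E(f_0)$, with equality in the martingale case. This will be the engine for everything else.

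For part i), to upgrade this scalar bound to the conditional inequality, I would use a splicing trick: given $A\in\mathscr{F}_\sigma$, the hybrid
\[
\tilde{\sigma}:=\sigma\,\mathbf{1}_{A}+\tau\,\mathbf{1}_{A^{c}}
\]
is again a stopping time satisfying $\tilde{\sigma}\leq \tau\leq T$. Applying the master inequality to $\tilde{\sigma}$ and to $\tau$ and subtracting gives $\int_{A}f_\sigma\,d\Pb\geq \int_{A}f_\tau\,d\Pb$ for every $A\in\mathscr{F}_\sigma$, which is exactly $f_\sigma\geq \E(f_\tau\mid\mathscr{F}_\sigma)$. In the martingale case the same argument applied to $-f$ converts the inequality into equality.

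For part ii), I would approximate by bounded stopping times. Applying part i) to $\sigma\wedge n\leq \tau\wedge n\leq n$ gives both $\E(f_{\tau\wedge n})\leq \E(f_0)$ and $f_{\sigma\wedge n}\geq \E(f_{\tau\wedge n}\mid\mathscr{F}_{\sigma\wedge n})$. Because $\sigma,\tau<\infty$ a.s., one has $f_{\tau\wedge n}\to f_\tau$ and $f_{\sigma\wedge n}\to f_\sigma$ pointwise; nonnegativity then makes Fatou's lemma (both the unconditional and conditional versions) available to pass to the limit in $n$, using the tower property together with $\bigcup_{n}\mathscr{F}_{\sigma\wedge n}=\mathscr{F}_\sigma$ to recover the final conditional statement. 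The moment bound $\E(f_\sigma)\leq \E(f_0)$ then follows by taking expectations.

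For part iii), one direction is immediate: in the martingale case both $f$ and $-f$ are supermartingales, so applying i) to each gives $\E(f_\tau)=\E(f_0)$ for every bounded stopping time $\tau$. For the converse, fix $s<t$ and $A\in\mathscr{F}_s$; then $\tau:=s\,\mathbf{1}_{A}+t\,\mathbf{1}_{A^{c}}$ is a bounded stopping time, and the hypothesis $\E(f_\tau)=\E(f_0)=\E(f_t)$ rearranges to $\int_{A}f_s\,d\Pb=\int_{A}f_t\,d\Pb$, i.e.\ $\E(f_t\mid\mathscr{F}_s)=f_s$. The only genuinely delicate point is the passage to the limit in part ii), where one must carefully invoke the conditional Fatou lemma; this is the step where the nonnegativity hypothesis is essential, and is the main obstacle to writing a fully rigorous argument.
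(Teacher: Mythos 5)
The paper does not prove this theorem at all: it is quoted as a standard fact with a citation to Klenke, so there is no in-paper argument to compare against. Your proposal is the standard textbook proof (telescoping/martingale-transform identity for bounded stopping times, the splicing trick for the conditional version, truncation plus Fatou for the nonnegative unbounded case, and the two-valued stopping time for the converse in iii)), and it is sound in outline.

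One step in part i) is not valid as literally written. From the master inequality you only get $\E(f_{\tilde{\sigma}})\leq\E(f_{0})$ and $\E(f_{\tau})\leq\E(f_{0})$, and subtracting two inequalities with the same right-hand side tells you nothing about how $\E(f_{\tilde{\sigma}})$ and $\E(f_{\tau})$ compare. What you actually need is the comparison $\E(f_{\tilde{\sigma}})\geq\E(f_{\tau})$ for the two bounded stopping times $\tilde{\sigma}\leq\tau\leq T$, which follows from the same engine applied between them rather than from $0$: write
\begin{equation}\nonumber
f_{\tau}-f_{\tilde{\sigma}}=\sum_{j=1}^{T}\mathbf{1}_{\{\tilde{\sigma}<j\leq\tau\}}(f_{j}-f_{j-1}),
\end{equation}
note that $\{\tilde{\sigma}<j\leq\tau\}=\{\tilde{\sigma}\leq j-1\}\setminus\{\tau\leq j-1\}\in\mathscr{F}_{j-1}$, and condition each summand on $\mathscr{F}_{j-1}$. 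With that correction the identity $\E(f_{\tilde{\sigma}})-\E(f_{\tau})=\int_{A}(f_{\sigma}-f_{\tau})\,d\Pb$ gives exactly the claimed conditional inequality. The rest of the argument (the limit passage in ii) via monotone convergence on $\int_{A\cap\{\sigma\leq n\}}f_{\sigma}$ and Fatou on $\int_{A\cap\{\sigma\leq n\}}f_{\tau\wedge n}$, and both directions of iii)) is correct as sketched.
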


For an $X$-valued martingale $f=(f_{j})_{j=0}^{\infty}$, the maximal function $f^*$ (resp. $f_{n}^{*}$) is defined by $f^*=\sup_{j\in\N}\|f_{j}-f_{0}\|$ (resp. $f_{n}^*=\max_{1\leq j\leq n} \|f_{j}-f_{0}\|$). Let $(d_{j,f})_{j=1}^{\infty}$ be the martingale differences of $f$, that is, $d_{j,f}=f_{j}-f_{j-1}$ for all $j\in\N$, then define $S_{p}(f)=(\sum_{j=1}^{\infty}\|d_{j,f}\|^{p})^{\frac{1}{p}}$ \big{(}resp. $S_{p,n}(f)=(\sum_{j=1}^{n}\|d_{j,f}\|^{p})^{\frac{1}{p}}$\big{)}.

Recall that a martingale $f$ is said to be \emph{conditionally symmetric} if and only if
\begin{equation}\nonumber
\E\varphi(d_{1,f},\cdots,d_{j-1,f},d_{j,f})=\E\varphi(d_{1,f},\cdots,d_{j-1,f},-d_{j,f}),
\end{equation}
for all bounded continuous function $\varphi:X^{j}\to \R$ and all $j\in\N$. For example, all dyadic martingales are conditionally symmetric.

We now conclude this subsection with an important property of conditionally symmetric martingales (see e.g. \cite[p. 53]{Bu2}).

\begin{prop}\label{Burkholder's replace filtration argument}
Let $X$ be a Banach space and $f$ be an $X$-valued conditionally symmetric martingale. Then $f$ is also a martingale relatively to the filtration $(\mathscr{G}_{j})_{j=0}^{\infty}$ defined by
\begin{equation}\nonumber
\mathscr{G}_{j}=\sigma(d_{0,f},\cdots,d_{j,f},\|d_{j+1,f}\|),~\forall j\in\N_{0}
\end{equation}
where $d_{0,f}\coloneqq f_{0}$ for convenience.
\end{prop}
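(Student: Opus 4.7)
The plan is to verify the three defining properties of a martingale with respect to $(\mathscr{G}_j)$: adaptedness, increasingness of the filtration, and the conditional expectation identity. The first two are essentially bookkeeping: $f_j = \sum_{i=0}^{j} d_{i,f}$ is $\mathscr{G}_j$-measurable by construction, and $\mathscr{G}_{j-1} \subseteq \mathscr{G}_j$ follows because $\|d_{j,f}\|$ is a continuous function of $d_{j,f}$, which is already $\mathscr{G}_j$-measurable. So the whole content lies in proving
\[
\E(d_{j,f} \mid \mathscr{G}_{j-1}) = 0, \qquad j \in \N.
\]

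To establish this, I would first note the key symmetry: $\|d_{j,f}\| = \|-d_{j,f}\|$, so conditioning on the norm of $d_{j,f}$ (in addition to the earlier differences) does not break the sign flip exploited by conditional symmetry. Concretely, by the monotone class theorem it suffices to check, for every set of the form $A = B \cap \{\|d_{j,f}\| \in E\}$ with $B \in \sigma(d_{0,f},\dots,d_{j-1,f})$ and $E$ Borel, that $\E[d_{j,f}\,\mathbf{1}_A] = 0$.

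To reduce to the scalar hypothesis in the definition of conditional symmetry, I would test against an arbitrary $x^* \in X^*$. Define
\[
\varphi(y_1,\dots,y_j) \;=\; \langle x^*, y_j\rangle\,\mathbf{1}_B(y_1,\dots,y_{j-1})\,\mathbf{1}_E(\|y_j\|).
\]
First extend the conditional symmetry from bounded continuous $\R$-valued test functions to bounded Borel ones (a standard monotone class / approximation argument, since both sides of the defining identity are Borel measures on $X^{j}$). Applying this extended identity to the $\varphi$ above, together with $\|y_j\|=\|-y_j\|$, gives
\[
\E\bigl[\langle x^*, d_{j,f}\rangle\,\mathbf{1}_B\,\mathbf{1}_E\bigr] \;=\; \E\bigl[\langle x^*, -d_{j,f}\rangle\,\mathbf{1}_B\,\mathbf{1}_E\bigr] \;=\; -\E\bigl[\langle x^*, d_{j,f}\rangle\,\mathbf{1}_B\,\mathbf{1}_E\bigr],
\]
so this expectation vanishes. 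Since $x^*\in X^*$ was arbitrary and $d_{j,f}$ is Bochner-integrable, the Hahn--Banach characterization of the Bochner integral gives $\E[d_{j,f}\,\mathbf{1}_A]=0$, which is the desired identity.

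The only mildly subtle point is the extension of the conditional-symmetry hypothesis from bounded continuous to bounded Borel test functions, because the definition given in the paper is formulated only for continuous $\varphi$; but this is a routine $\pi$--$\lambda$ argument once one observes that the hypothesis forces the joint law of $(d_{1,f},\dots,d_{j,f})$ and of $(d_{1,f},\dots,-d_{j,f})$ to coincide on the full Borel $\sigma$-algebra. Everything else, including the vector-valued passage via $X^*$, is mechanical.
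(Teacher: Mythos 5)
Your argument is correct. The paper itself gives no proof of this proposition --- it is stated with a citation to Burkholder's Saint-Flour notes --- so there is nothing to compare against except the standard argument, which is exactly what you reconstruct: reduce the martingale identity to $\E[d_{j,f}\mathbf{1}_A]=0$ on the $\pi$-system of sets $A=B\cap\{\|d_{j,f}\|\in E\}$, test against $x^*\in X^*$, and exploit that $\|{-y_j}\|=\|y_j\|$ makes the test function odd in its last argument so that conditional symmetry forces the expectation to vanish; Hahn--Banach and Dynkin's lemma then finish the job. Two small points to tidy. First, your extension step produces equality of the laws of $(d_{1,f},\dots,d_{j,f})$ and $(d_{1,f},\dots,-d_{j,f})$ only for \emph{bounded} Borel test functions, but the $\varphi$ you then apply it to contains the unbounded factor $\langle x^*,y_j\rangle$; you should either truncate at level $M$ and pass to the limit by dominated convergence (using $\E\|d_{j,f}\|<\infty$), or simply observe that once the two laws coincide, every function integrable against that common law has equal integrals. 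Second, $\mathscr{G}_{j-1}$ also contains $d_{0,f}=f_0$, which does not appear in the paper's definition of conditional symmetry; this is harmless only because the paper's standing assumption $\mathscr{F}_0=\{\emptyset,\Omega\}$ forces $f_0$ to be a.s.\ constant, and it is worth saying so explicitly.
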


\subsection{Digress to Banach space theory}
Recall that a Banach space $X$ is said to be \emph{$p$-uniformly smooth} if there exists $s>0$ such that
\begin{equation}\label{uniform smooth constant}
\rho_{X}(\tau)\leq s\tau^{p}
\end{equation}
for all $\tau>0$ and denote by $s_{p}(X)$ the infimal constant such that \eqref{uniform smooth constant} holds.
It is clear that the $p$-uniform smoothness implies the $q$-uniform smoothness for $q\leq p$, hence, in order to avoid confusion, we say that a Banach space is $p$-uniformly smooth where the $p$ refers to the best possible for which $\rho_{X}(\tau)\leq s\tau^{p}$ holds for each $\tau>0$.

For a proper extend real-valued lower semicontinuous convex function $h:X\to\R\cup\{+\infty\}$, the subdifferential of $h$ is a set-valued mapping $\partial h:X\to 2^{X^*}$ defined by, for $x\in X$,
\begin{equation}\nonumber
\partial h(x)=\{x^*\in X^*:\langle x^*,y-x \rangle\leq f(y)-f(x),~\forall y\in X\}.
\end{equation}
\begin{defn}
For $p>1$, the \emph{generalized dual map} $J_{p}:X\to 2^{X^*}$ is defined by
\begin{equation}\label{definition of genrealized dual map}
J_{p}(x)=\{x^*\in X^*:\langle x^*,x \rangle=\|x\|\cdot\|x^*\|,~\|x^*\|=\|x\|^{p-1}\},~x\in X.
\end{equation}
\end{defn}
The following proposition provides a connection between subdifferential of a convex function and the generalized dual map. 
\begin{prop}(\cite[p. 32]{Ch})
For $p>1$, the generalized dual map $J_{p}$ is the subdifferential of convex function $\frac{1}{p}\|\cdot\|^{p}$.
\end{prop}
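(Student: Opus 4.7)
The plan is to establish the set equality $\partial h(x) = J_{p}(x)$ for $h \coloneqq \frac{1}{p}\|\cdot\|^{p}$ by verifying both inclusions; the workhorse will be Young's inequality $ab \leq \frac{a^{p}}{p} + \frac{b^{q}}{q}$ with conjugate exponent $q = p/(p-1)$, which in particular satisfies the identity $(p-1)q = p$.

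For the inclusion $J_{p}(x) \subseteq \partial h(x)$, I would fix $x^{*} \in J_{p}(x)$ and rewrite the subgradient inequality $\langle x^{*}, y-x\rangle \leq h(y) - h(x)$. Using $\langle x^{*}, x\rangle = \|x\|^{p}$, this is equivalent to $\langle x^{*}, y\rangle \leq \frac{1}{p}\|y\|^{p} + \frac{1}{q}\|x\|^{p}$, and from $\langle x^{*}, y\rangle \leq \|x^{*}\|\|y\| = \|x\|^{p-1}\|y\|$ Young's inequality delivers exactly this bound (with the exponents matching thanks to $(p-1)q = p$).

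For the reverse inclusion $\partial h(x) \subseteq J_{p}(x)$, I would first test the subgradient inequality on the ray through $x$ by taking $y = tx$ with $t \in \R$. The resulting scalar function $\phi(t) \coloneqq (t-1)\langle x^{*}, x\rangle - \frac{1}{p}(|t|^{p}-1)\|x\|^{p}$ is nonpositive with $\phi(1) = 0$, so the first-order condition at $t=1$ yields $\langle x^{*}, x\rangle = \|x\|^{p}$. Next, to control $\|x^{*}\|$, I would substitute $y = tz$ for arbitrary $z \in X$ and $t > 0$, obtaining $\langle x^{*}, z\rangle \leq \frac{t^{p-1}}{p}\|z\|^{p} + \frac{1}{qt}\|x\|^{p}$; optimizing in $t$ (the optimum occurs at $t = \|x\|/\|z\|$) collapses this to $\langle x^{*}, z\rangle \leq \|x\|^{p-1}\|z\|$, so $\|x^{*}\| \leq \|x\|^{p-1}$. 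Combined with $\|x^{*}\|\|x\| \geq \langle x^{*}, x\rangle = \|x\|^{p}$ this forces $\|x^{*}\| = \|x\|^{p-1}$, and the alignment condition in the definition of $J_{p}$ is already built into $\langle x^{*}, x\rangle = \|x\|^{p} = \|x^{*}\|\|x\|$.

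The degenerate point $x=0$ deserves a separate line: substituting $y = tz$ and sending $t\to 0^{+}$ in the subgradient inequality forces $\langle x^{*}, z\rangle \leq 0$ for every $z$, hence $x^{*} = 0$, which matches $J_{p}(0) = \{0\}$. I do not anticipate a genuine obstacle, since the proof is essentially careful bookkeeping around Young's inequality together with a one-variable optimization along the ray $\R x$. The only subtleties are maintaining the conjugate-exponent identity $(p-1)q = p$ when invoking Young and isolating the case $x=0$ to avoid dividing by $\|x\|$ or $\|z\|$ in the scaling step.
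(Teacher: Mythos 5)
Your proof is correct and complete: both inclusions are verified, the Young's-inequality step with $(p-1)q=p$ is the right mechanism for $J_p(x)\subseteq\partial h(x)$, and the scaling/optimization along rays correctly recovers $\langle x^*,x\rangle=\|x\|^p$ and $\|x^*\|=\|x\|^{p-1}$ for the converse, with the $x=0$ case handled separately. The paper itself offers no proof of this proposition — it is quoted with a citation to Chidume's book — and your argument is precisely the standard one found there, so there is nothing to contrast.
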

We would like to mention here that for a $p$-uniformly smooth Banach space $X$ ($1<p\leq 2$) the generalized dual map $J_{p}$ is indeed a mapping defined from $X$ to $X^{*}$. In fact, if $X$ is a $p$-uniformly smooth space, then its dual space $X^{*}$ is of $q$-uniformly convex with $\frac{1}{p}+\frac{1}{q}=1$ (see \cite[p. 46]{Ch}). Then, for any $x^{*}_{1}$ and $x^{*}_{2}\in J_{p}(x)$, we have
\[
\langle x^{*}_{1},x \rangle=\langle x^{*}_{2},x \rangle=\|x\|^{p},
\]
with $\|x^{*}_{1}\|=\|x^{*}_{2}\|=\|x\|^{p-1}$. Therefore,
\[
\langle\frac{x^{*}_{1}+x^{*}_{2}}{2},x\rangle=\|x\|^{p},
\]
which implies that $\big{\|}\frac{x^{*}_{1}+x^{*}_{2}}{2}\big{\|}\geq \|x\|^{p-1}$. On the other hand, by the triangle inequality, we have $\big{\|}\frac{x^{*}_{1}+x^{*}_{2}}{2}\big{\|}\leq \|x\|^{p-1}$, which yields that $\big{\|}\frac{x^{*}_{1}+x^{*}_{2}}{2} \big{\|}=\|x\|^{p-1}$. Hence, it follows from the uniform convexity of $X^{*}$ that
$x^{*}_{1}=x^{*}_{2}$.

The following theorem provides a characterization of $p$-uniformly smooth spaces by using the generalized dual map $J_{p}$.

\begin{thm}(\cite[p. 48]{Ch})\label{equivalent smooth and dual mapping}
Let $1<p\leq 2$ and $X$ be a Banach space, then the following are equivalent to each other.
\begin{enumerate}[i)]
\item $X$ is $p$-uniformly smooth;
\item There is a constant $c>0$ satisfies that for all $x,~y\in X$,
\begin{equation}\label{constant}
\|x+y\|^{p}\leq \|x\|^{p}+p\langle J_{p}(x),y\rangle+c\|y\|^{p}.
\end{equation}
\end{enumerate}
\end{thm}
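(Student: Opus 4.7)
The plan is to pass between (i) and (ii) through the symmetrized intermediate inequality
\[
\frac{\|x+y\|^p + \|x-y\|^p}{2} \leq \|x\|^p + C\|y\|^p, \qquad (\star)
\]
using the subdifferential identification $\partial\big(\tfrac{1}{p}\|\cdot\|^p\big) = J_p$ from Proposition 2.1 to trade $(\star)$ for the asymmetric bound in (ii).

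\textbf{Direction (ii) $\Rightarrow$ (i).} This is the easy direction. Applying (ii) once with $y$ and once with $-y$, the linear terms $\pm p\langle J_p(x), y\rangle$ cancel upon averaging, yielding $(\star)$ with the same constant $c$. Fix $\|x\| = \|y\| = 1$, replace $y$ by $\tau y$, and combine the power-mean inequality (convexity of $t \mapsto t^p$ for $p \geq 1$) with the elementary bound $(1+u)^{1/p} \leq 1 + u/p$ for $u \geq 0$:
\[
\frac{\|x+\tau y\| + \|x-\tau y\|}{2} \leq \left(\frac{\|x+\tau y\|^p + \|x-\tau y\|^p}{2}\right)^{1/p} \leq (1 + c\tau^p)^{1/p} \leq 1 + \tfrac{c}{p}\tau^p.
\]
Taking the supremum over unit $x, y$ gives $\rho_X(\tau) \leq (c/p)\tau^p$, so $X$ is $p$-uniformly smooth.

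\textbf{Direction (i) $\Rightarrow$ (ii).} The technical step is to establish $(\star)$ from $\rho_X(\tau) \leq s\tau^p$. This is the content of the extension of Naor's Lemma 2.2 (reproduced as \eqref{lemma for 2-uniformly smooth spaces}) from $p=2$ to general $1 < p \leq 2$, specialized to $q = p$: the same proof, with the exponent $2$ replaced by $p$ throughout, gives
\[
\frac{\|x+y\|^p + \|x-y\|^p}{2} \leq \|x\|^p + 8(s_p(X) + p)\|y\|^p,
\]
i.e., $(\star)$ with $C = 8(s_p(X) + p)$. Once $(\star)$ is in hand, the subgradient inequality for the convex function $\tfrac{1}{p}\|\cdot\|^p$ (whose subdifferential is $J_p$ by Proposition 2.1) applied at $x$ with increment $-y$ gives
\[
\|x-y\|^p \geq \|x\|^p - p\langle J_p(x), y\rangle.
\]
Substituting this lower bound into $(\star)$ and rearranging yields
\[
\|x+y\|^p \leq 2\|x\|^p + 2C\|y\|^p - \|x-y\|^p \leq \|x\|^p + p\langle J_p(x), y\rangle + 2C\|y\|^p,
\]
which is (ii) with $c = 2C$.

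\textbf{Main obstacle.} The heart of the argument is establishing $(\star)$ from the definition of $\rho_X$. A direct homogenization of the modulus-of-smoothness inequality produces only $\|x+y\| + \|x-y\| \leq 2\|x\| + 2s\|y\|^p/\|x\|^{p-1}$, a first-power statement with a singular dependence on $\|x\|$; promoting this to the $p$-homogeneous form $(\star)$ requires the more delicate integration-along-the-segment argument underlying Naor's Lemma 2.2, which tracks the second-order behaviour of $t \mapsto \|x+ty\|^p$ and integrates the $p$-smoothness bound. Once $(\star)$ is available, everything else is elementary bookkeeping.
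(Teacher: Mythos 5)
The paper does not actually prove this theorem; it is quoted verbatim from Chidume--Takens \cite[p.~48]{Ch}, so there is no internal proof to compare against. Judged on its own, your argument is correct. The direction (ii)~$\Rightarrow$~(i) is complete: averaging over $\pm y$ kills the linear term, and the power-mean and $(1+u)^{1/p}\leq 1+u/p$ steps are both valid for $p\geq 1$, giving $\rho_X(\tau)\leq (c/p)\tau^p$. The direction (i)~$\Rightarrow$~(ii) is also sound: the subgradient inequality $\|x-y\|^p\geq\|x\|^p-p\langle J_p(x),y\rangle$ (legitimate since $J_p=\partial(\tfrac1p\|\cdot\|^p)$ by the paper's Proposition~2.2) converts the symmetrized two-point inequality $(\star)$ into the asymmetric form with $c=2C$, and $2C=16(s_p(X)+p)=O(s_p(X))$ is consistent with the paper's remark on the size of $c$ (using that $s_p(X)$ is bounded below by an absolute constant for any infinite-dimensional, indeed any at-least-two-dimensional, space). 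The one point where your proof is not self-contained is exactly the one you flag: deducing $(\star)$ from $\rho_X(\tau)\leq s\tau^p$ is delegated to the $q=p$ case of the $p$-generalization of Naor's Lemma~2.2, which the paper asserts in its introduction but also does not prove. That is an acceptable level of outsourcing here, though it makes your derivation circular-looking in spirit, since Naor's lemma is itself usually proved by the same integration-along-the-segment analysis of $t\mapsto\|x+ty\|^p$ that underlies the standard Xu--Roach/Chidume proof of (i)~$\Rightarrow$~(ii), namely $\|x+y\|^p-\|x\|^p=p\int_0^1\langle J_p(x+ty),y\rangle\,dt$ followed by an estimate of $\langle J_p(x+ty)-J_p(x),y\rangle$ via the modulus of smoothness; that direct route avoids the detour through $(\star)$ altogether and is what the cited reference does. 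A final cosmetic caveat: in (ii)~$\Rightarrow$~(i) one should fix a single selection $x^*\in J_p(x)$ for both the $+y$ and $-y$ applications so that the linear terms genuinely cancel; single-valuedness of $J_p$ is only guaranteed \emph{after} smoothness is established.
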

\begin{rem}
From the proof of Theorem \ref{equivalent smooth and dual mapping} we know that the constant $c>0$ in \eqref{constant} can be chosen to satisfy that $c=O(s_{p}(X))$.
\end{rem}

At the end of this subsection, we recall the following renorming theorem, due to Pisier \cite{Pis1}, which provides a characterization of $p$-uniformly smooth spaces by martingale inequalities.

\begin{thm}(\cite[Theorem 3.1]{Pis1})\label{Pisier's renorming theorem}
Fix $1<p\leq 2$. Then for a Banach space $X$ the following are equivalent to each other.
\begin{enumerate}[i)]
\item $X$ is linear isomorphic to a $p$-uniformly smooth space;
\item There is a constant $c>0$ such that all $X$-valued $L^{p}$-martingale $f=(f_{n})_{n=0}^{\infty}$ the following inequality holds
\begin{equation}
\sup\limits_{n\in\N}\E \|f_{n}\|^{p}\leq C^{p}\sum\limits_{n=0}^{\infty}\E\|d_{n,f}\|^{p};
\end{equation}
\item Same as ii) for all $X$-valued dyadic martingales.
\end{enumerate}
\end{thm}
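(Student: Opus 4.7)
The plan is to establish the three implications (i)$\Rightarrow$(ii), (ii)$\Rightarrow$(iii), and (iii)$\Rightarrow$(i). The first two are direct; the third, which is Pisier's renorming theorem, carries the genuine content.

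For (i)$\Rightarrow$(ii), assume $X$ is $p$-uniformly smooth. By Theorem \ref{equivalent smooth and dual mapping}, there is a constant $c = O(s_p(X))$ with $\|a+b\|^p \leq \|a\|^p + p\langle J_p(a),b\rangle + c\|b\|^p$ for all $a,b \in X$. Applying this pointwise with $a = f_{n-1}$ and $b = d_{n,f}$ and taking the conditional expectation $\E_{n-1}$, note that $J_p : X \to X^*$ is single-valued and continuous on a $p$-uniformly smooth space, so $J_p(f_{n-1})$ is $\mathscr{F}_{n-1}$-measurable and pulls out of the Bochner pairing; the martingale property $\E_{n-1}d_{n,f}=0$ then kills the linear term. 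One obtains
$$\E_{n-1}\|f_n\|^p \;\leq\; \|f_{n-1}\|^p + c\,\E_{n-1}\|d_{n,f}\|^p,$$
and telescoping (with the convention $d_{0,f}:=f_0$) yields $\E\|f_n\|^p \leq \max\{1,c\}\sum_{k=0}^n \E\|d_{k,f}\|^p$, i.e.\ (ii) with $C^p = \max\{1,c\}$. The implication (ii)$\Rightarrow$(iii) is immediate since dyadic martingales form a subclass of $L^p$-martingales.

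The hard direction (iii)$\Rightarrow$(i) is Pisier's renorming theorem. The strategy is to construct an equivalent norm $|||\cdot|||$ on $X$ that is $p$-uniformly smooth. A natural Pisier-style candidate is
$$|||x|||^p \;=\; \sup\bigl\{\E\|g_\infty\|^p \,:\, g\text{ dyadic martingale},\ g_0=x,\ \textstyle\sum_{k\geq 1}\E\|d_{k,g}\|^p \leq \|x\|^p\bigr\},$$
or an appropriate variant ensuring positive homogeneity. Hypothesis (iii) yields $\|x\|^p \leq |||x|||^p \leq C^p\|x\|^p$, i.e.\ equivalence with the original norm. The heart of the argument is to derive a two-point inequality of the form
$$\tfrac{1}{2}\bigl(|||x+y|||^p + |||x-y|||^p\bigr) \;\leq\; |||x|||^p + c'|||y|||^p,$$
by pasting the near-extremal dyadic martingales for $x \pm y$ beneath a single initial $\pm 1$ Rademacher split; this two-point inequality is equivalent to $\rho_{|||\cdot|||}(\tau) = O(\tau^p)$. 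The main obstacle is twofold: (a) choosing the feasible set of martingales so that $|||\cdot|||$ is genuinely a norm --- subadditivity and exact positive homogeneity being delicate --- and (b) showing that the pasted martingale attains the extremal value up to a universal constant, so that the two-point inequality passes with a dimension-free constant. Pisier's original route sidesteps some of these issues by dualizing to $q$-uniform convexity of $X^*$ (where $\tfrac{1}{p}+\tfrac{1}{q}=1$) and invoking the fixed-point/interpolation machinery available in that setting.
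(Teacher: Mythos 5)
The paper does not prove this theorem at all: it is quoted verbatim as Pisier's renorming theorem \cite[Theorem 3.1]{Pis1} and used as a black box (in the proof of Theorem \ref{p-uniform smoothness is necessary}), so there is no in-paper argument to compare yours against. Judged on its own terms, your proposal proves the two easy implications but not the third. The implication (i)$\Rightarrow$(ii) is correct and standard: the inequality \eqref{constant}, conditional expectation killing the term $p\langle J_{p}(f_{n-1}),d_{n,f}\rangle$ (the pairing is integrable by H\"older since $\|J_{p}(f_{n-1})\|=\|f_{n-1}\|^{p-1}\in L^{p/(p-1)}$ for an $L^{p}$-martingale), and telescoping give exactly (ii); one should only add the remark that (i) asserts isomorphism to a $p$-uniformly smooth space, which changes nothing but the constant. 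The implication (ii)$\Rightarrow$(iii) is trivial, as you say.

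The genuine gap is (iii)$\Rightarrow$(i), which is the entire content of the theorem, and your treatment of it is a strategy outline rather than a proof. The two obstacles you name yourself are precisely the points where the argument lives: (a) the candidate $|||\cdot|||$ as written is positively homogeneous but there is no argument that it is subadditive (equivalently, that its unit ball is convex), and without that one does not even have a norm to compute a modulus of smoothness for; (b) the ``pasting'' of near-extremal dyadic martingales for $x+y$ and $x-y$ under an initial Rademacher split must be shown to stay in the feasible set and to lose only a universal constant, and no such verification is given. Moreover, the two-point inequality $\tfrac12(|||x+y|||^{p}+|||x-y|||^{p})\leq|||x|||^{p}+c'|||y|||^{p}$ does not by itself yield $\rho_{|||\cdot|||}(\tau)=O(\tau^{p})$ without a further argument (this equivalence is itself a nontrivial lemma of the type recorded in Theorem \ref{equivalent smooth and dual mapping}). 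Your closing remark that Pisier ``sidesteps'' these issues by dualizing to $q$-uniform convexity of $X^{*}$ is the correct pointer to an actual proof, but invoking it is deferring to the very theorem being proved. As it stands, the hard direction remains unproved, so the proposal should be regarded as a correct reduction of the theorem to Pisier's renorming argument rather than a self-contained proof.
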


\section{Azuma-type inequalities for $p$-uniformly smooth space-valued martingales}\label{main section}

In this section we will study Azuma-type inequalities for martingales with values in $p$-uniformly smooth space. In addition, as we will show below, the $p$-uniform smoothness of the image space is necessary for Azuma-type inequalities to hold. At the end of this section, Azuma-type inequality for self-normalized sums will also be discussed. In the rest of this paper, we will simply use $|\cdot|$ to stand for either Euclidean or Hilbert norm.

We now briefly recall the definition of functions of ``Mittag-Leffler type''. For $\alpha,~\beta>0$ and $z\in\C$, then
\begin{equation}\nonumber
E_{\alpha,\beta}(z)=\sum\limits_{k=0}^{\infty}\frac{z^{k}}{\Gamma(\alpha k+\beta)}
\end{equation}
is said to be two-parameter function of ``\emph{Mittag-Leffler type}'', which plays an important role in the fractional calculus (see \cite{Is}).

Recall that a random process $w=(w_{j})_{j=1}^{\infty}$ is called predictable relatively to the filtration $(\mathscr{F}_{j})_{j=0}^{\infty}$ if $w_{j}$ is $\mathscr{F}_{j-1}$-measurable for each $j\in\N$. We will simply call $w=(w_{j})_{j=1}^{\infty}$ predictable if there no confusion occurs. The following theorem is a simple variant of Pinelis' Azuma inequality (see \cite[Theorem 3.6]{Pin2}) for $2$-uniformly smooth space-valued conditionally symmetric martingales. The proof presented below is elementary and is slightly simpler than that in \cite{Pin2}. Hilbert space-valued martingales will suffice for our needs and in this case fruitful structures of Hilbert space could made the proof easier. In particular, we could compute the differential of $|\cdot|^{2}$ directly via the parallelogram law without the approximation argument (i.e. \cite[Lemma 2.2]{Pin2}).



\begin{thm}\label{Hilbert valued martingale concentration inequality}
Suppose that $f=(f_{j})_{j=0}^{\infty}$ is a $\HS$-valued martingale \big{(}relatively to the filtration $(\mathscr{F}_{j})_{j=0}^{\infty}$\big{)} such that there exists a nonnegative predictable sequence $w=(w_{j})_{j=1}^{\infty}$ which satisfies $|d_{j,f}|\leq w_{j}~a.s.$ for each $j\in \N$. Then, for $b\geq\Big{\|}\sum_{j=1}^{\infty} w^{2}_{j} \Big{\|}_{\infty}$ and every $r>0$, we have
\begin{equation}\nonumber
\Pb\{f^*\geq r\}\leq 2\exp\Big{\{}-\frac{r^{2}}{2b}\Big{\}}.
\end{equation}
\end{thm}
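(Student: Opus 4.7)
The plan is to execute the exponential-supermartingale proof of Azuma's inequality in the Hilbert setting, using the parallelogram identity $|x+d|^{2} = |x|^{2} + 2\langle x,d\rangle + |d|^{2}$ in place of the scalar Hoeffding lemma. Writing $A_{j} := \sum_{k=1}^{j} w_{k}^{2}$, which is predictable and bounded by $b$ almost surely, I would show that for each fixed $\lambda > 0$ the process
\[
M_{j} \;:=\; \cosh\!\bigl(\lambda|f_{j} - f_{0}|\bigr)\,\exp\!\bigl(-\lambda^{2} A_{j}/2\bigr), \qquad j \in \N_{0},
\]
is a nonnegative supermartingale with $M_{0} = 1$.

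The supermartingale property reduces, by predictability of $A$, to the following one-step estimate: for every $x \in \HS$ and every centered $\HS$-valued random vector $d$ with $|d| \leq w$ almost surely,
\[
\E\cosh\!\bigl(\lambda|x+d|\bigr) \;\leq\; e^{\lambda^{2}w^{2}/2}\cosh\!\bigl(\lambda|x|\bigr).
\]
To prove this I would exploit the parallelogram identity together with the orthogonal decomposition $d = Ze + \xi$, where $e = x/|x|$, $Z = \langle e,d\rangle$, and $\xi \perp e$, which yields $\E Z = 0$, $|Z| \leq w$, $|\xi|^{2} = |d|^{2} - Z^{2}$, and $|x+d|^{2} = (|x|+Z)^{2} + |\xi|^{2}$. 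The elementary inequality $\cosh(\lambda\sqrt{A+B}) \leq \cosh(\lambda\sqrt{A})\exp(\lambda^{2}B/2)$ (valid for $A,B \geq 0$ and easily verified by comparing Taylor coefficients), applied with $A = (|x|+Z)^{2}$ and $B = |\xi|^{2}$, combined with the scalar Hoeffding bound $\E e^{\pm\lambda Z}\leq e^{\lambda^{2}w^{2}/2}$ applied to the real centered variable $Z$, produces the displayed estimate.

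Granting the supermartingale property of $M$, the tail bound follows from a standard optional-stopping argument. Let $\tau := \inf\{j\in\N : |f_{j}-f_{0}|\geq r\}$; then $\{f^{*}\geq r\} = \{\tau<\infty\}$. The Optional Sampling Theorem (Theorem \ref{optional sampling theorem}) applied to $M$ and the bounded stopping time $\tau\wedge n$ gives $\E M_{\tau\wedge n}\leq M_{0} = 1$. On $\{\tau\leq n\}$ one has $\cosh(\lambda|f_{\tau} - f_{0}|)\geq\cosh(\lambda r)\geq\tfrac{1}{2}e^{\lambda r}$ and $A_{\tau}\leq b$, so
\[
\Pb\{\tau\leq n\}\;\leq\; 2\exp\!\bigl(\lambda^{2}b/2 - \lambda r\bigr).
\]
Letting $n\to\infty$ and optimizing in $\lambda > 0$ (the optimum is $\lambda = r/b$) produces the claimed $2\exp(-r^{2}/(2b))$.

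The main obstacle is the one-step Hilbert inequality. Unlike the scalar Hoeffding lemma, the parallelogram identity introduces the positive extra term $|d|^{2}$, which must be controlled jointly with the mean-zero cancellation in $\langle x,d\rangle$. A crude moment bound that ignores this interplay only yields the looser sub-exponential factor $\cosh(\lambda(|x|+w))$ in place of the sharp $e^{\lambda^{2}w^{2}/2}\cosh(\lambda|x|)$, which would give the wrong constant $r^{2}/b$ instead of $r^{2}/(2b)$ in the exponent. Recovering the correct constant relies on the parallel-perpendicular decomposition of $d$ relative to $x$, which absorbs the perpendicular magnitude into the exponent via $|\xi|^{2}\leq w^{2}-Z^{2}$ while the parallel component $Z$ carries the full weight of the one-dimensional Hoeffding lemma.
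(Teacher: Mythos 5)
Your overall architecture --- the supermartingale $M_j=\cosh(\lambda|f_j-f_0|)e^{-\lambda^2A_j/2}$, optional stopping at $\tau\wedge n$, and optimization in $\lambda$ --- is exactly the paper's, and that part is sound. The gap is in your proof of the one-step estimate $\E\cosh(\lambda|x+d|)\leq e^{\lambda^2w^2/2}\cosh(\lambda|x|)$. After the decomposition $d=Ze+\xi$ and your (correct) elementary inequality, you are left with $\E\big[\cosh(\lambda(|x|+Z))e^{\lambda^2|\xi|^2/2}\big]$, and you propose to finish by inserting $|\xi|^2\leq w^2-Z^2$ and invoking Hoeffding's lemma for $Z$. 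But once the factor $e^{\lambda^2w^2/2}$ is pulled out, what remains to be shown is
\begin{equation}\nonumber
\E\big[\cosh(\lambda(|x|+Z))e^{-\lambda^2Z^2/2}\big]\leq\cosh(\lambda|x|),
\end{equation}
and (letting $|x|\to\infty$) this forces the self-normalized inequality $\E[e^{\lambda Z-\lambda^2Z^2/2}]\leq1$. That is strictly stronger than Hoeffding's lemma and is false for general centered bounded $Z$: take $Z=\tfrac12$ with probability $\tfrac23$ and $Z=-1$ with probability $\tfrac13$ (so $\E Z=0$, $|Z|\leq 1$); at $\lambda=1$ one computes $\tfrac23e^{3/8}+\tfrac13e^{-3/2}\approx1.044>1$. (The needed inequality does hold for symmetric $Z$, via $\cosh t\leq e^{t^2/2}$, so your route works for conditionally symmetric martingales; but the theorem concerns arbitrary martingales dominated by a predictable sequence.) The fallback of bounding $|\xi|\leq w$ crudely and applying Hoeffding to $Z$ separately yields $e^{\lambda^2w^2}$ instead of $e^{\lambda^2w^2/2}$, hence $2\exp(-r^2/(4b))$ rather than the claimed constant --- precisely the loss you identified but did not actually avoid.

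The paper closes this step without any decomposition: following Pinelis, it sets $\varphi(t)=\E_{j-1}\cosh(\lambda|f_{j-1}+td_{j,f}|)$, uses only the first-moment information $\E_{j-1}d_{j,f}=0$ to get $\varphi'(0)=0$, and bounds the second derivative pointwise by $\varphi''(t)\leq\lambda^2w_j^2\varphi(t)$ via the parallelogram law; integrating this differential inequality gives $\varphi(1)\leq e^{\lambda^2w_j^2/2}\varphi(0)$ with no symmetry assumption and no self-normalized exponential bound. You would need to replace your Hoeffding step with such a second-order Taylor/ODE argument to make the one-step lemma, and hence the theorem, go through.
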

\begin{proof}
For a $C^{2}$-function $u$ defined on $\R$, let $\phi(t)=\cosh u(t)=E_{2,1}(u^{2}(t))$. Then,
\begin{equation}\label{twice derivate}
\begin{split}
\phi^{''}&=\big{(}\sum\limits_{k=1}^{\infty}\frac{u^{2k-1}\cdot u^{'}}{\Gamma(2k)}\big{)}^{'}\\
         &=\sum\limits_{k=1}^{\infty}\frac{(2k-1)u^{2k-2}\cdot(u^{'})^{2}}{\Gamma(2k)}+\sum\limits_{k=1}^{\infty}\frac{u^{2k-1}\cdot u^{''}}{\Gamma(2k)}\\
         &=\sum\limits_{k=0}^{\infty}\frac{u^{2k}\cdot (u^{'})^{2}}{\Gamma(2k+1)}+\sum\limits_{k=0}^{\infty}\frac{u^{2k}\cdot u\cdot u^{''}}{\Gamma(2k+2)}\\
         &=E_{2,1}(u^{2})(u^{'})^{2}+E_{2,2}(u^{2})\cdot u\cdot u^{''}\\
         &\leq E_{2,1}(u^{2})\cdot (u^{2})^{''}
\end{split}
\end{equation}
For a $\HS$-valued martingale $f=(f_{j})_{j=0}^{\infty}$ and $\lambda>0$, without loss of generality, suppose that $f_{0}=0~a.s.$, we define $\varphi(t)\coloneqq \E_{j-1}\cosh(\lambda|f_{j-1}+td_{j,f}|)$. Then, by the linearity of conditional expectation and \eqref{twice derivate} we have
\begin{equation}\label{estimate of twice derivate}
\varphi^{''}(t)\leq \lambda^{2}\E_{j-1}\Big{\{} E_{2,1}(|f_{j-1}+td_{j,f}|^{2})\cdot (|f_{j-1}+td_{j,f}|^{2})^{''}\langle d_{j,f},d_{j,f}\rangle\Big{\}}.
\end{equation}
By the parallelogram law it follows that $(|x+ty|^{2})^{''}\langle y,y\rangle=|y|^{2}$ for all $x,~y\in\HS$ and $t\in[0,1]$.
Therefore, \eqref{estimate of twice derivate} becomes into
\begin{equation}\nonumber
\varphi^{''}(t)\leq \lambda^{2}\E_{j-1}\Big{\{} E_{2,1}(|f_{j-1}+td_{j,f}|^{2})\cdot |d_{j,f}|^{2}\Big{\}},
\end{equation}
for all $t\in[0,1]$.

Note that $|d_{j,f}|\leq w_{j}$ and $w_{j}$ is $\mathscr{F}_{j-1}$-measurable for each $j\in\N$, then we have
\begin{equation}\label{estimate of twice derivate via w}
\varphi^{''}(t)\leq (\lambda\cdot w_{j})^{2}\cdot \varphi(t),
\end{equation}
for all $t\in[0,1]$. Since $\varphi^{'}(0)=0$, then solving this differential inequality (i.e. Gronwall inequality) yields that
\begin{equation}\label{super-martingale}
\varphi(1)\leq \exp\left\{\frac{\lambda^{2}w_{j}^{2}}{2} \right\}\varphi(0),
\end{equation}
that is, $\E_{j-1}\cosh(\lambda|f_{j}|)\leq \exp\Big{\{}\frac{\lambda^{2}w_{j}^{2}}{2} \Big{\}}\cosh(\lambda|f_{j-1}|)$ for all $j\in\N$. Let
\[
g_{n}=\frac{\cosh(\lambda|f_{n}|)}{\prod\limits_{j=1}^{n}\exp\{(\lambda\cdot w_{j})^{2}/2\}}
\]
for all $n\in\N_{0}$, then $(g_{n})_{n=0}^{\infty}$ forms a nonnegative super-martingale. Define $\sigma:(\Omega,\mathscr{F},\Pb)\to \N\cup\{\infty\}$ by $\sigma=\inf\{n\in\N_{0}:|f_{n}|\geq r\}$, then, it is clear that $\sigma$ is a stopping time.  Hence, by the ``Optional Sampling Theorem'' (Theorem \ref{optional sampling theorem}) and the Fatou's lemma, we have
\begin{equation}\label{super-martingale inequality 1}
\int_{\{\sigma<\infty\}}g_{\sigma}d\Pb\leq \E(g_{0})=1.
\end{equation}
Combining $b\geq\big{\|}\sum\limits_{j=1}^{\infty} w^{2}_{j}\big{\|}_{\infty}$ with \eqref{super-martingale inequality 1}, we have
\[
\Pb\{f^{*}\geq r\}\leq \frac{\exp\big{\{}b\lambda^{2}/2\big{\}}}{\cosh(\lambda r)}
\]
for all $r>0$ and $\lambda>0$.

Therefore, the fact $\cosh(\lambda r)\geq \frac{e^{\lambda r}}{2}$ yields that
\begin{equation}\label{estimate of Hilbert valued inequality}
\Pb\{f^{*}\geq r\}\leq\inf\limits_{\lambda>0}\Big{\{} 2\exp\big{\{} b\lambda^{2}/2-\lambda\cdot r \big{\}} \Big{\}}.
\end{equation}
Minimize \eqref{estimate of Hilbert valued inequality}, by choosing $\lambda=\frac{r}{b}>0$, we have
\begin{equation}\nonumber
\Pb\{f^{*}\geq r\}\leq 2\exp\Big{\{} -\frac{r^{2}}{2b} \Big{\}}.
\end{equation}
\end{proof}

The next lemma is a ``martingale dimension reduction'' argument, which reduces $p$-uniformly smooth space-valued martingale to an $\R^{2}$-valued one. The first ``martingale dimension reduction'' argument was provided by Kallenberg and Sztencel  in their study of continuous-time Hilbert space-valued martingales. They stated that any inequality for $\R^{2}$-valued continuous-time martingales which only involves some basic processes of the martingales remains true for arbitrary dimension with the same values of all constants (see \cite[Theorem 3.1]{K-S} for the details). However, the original proof of this theorem is long and quiet difficult, while Kwapie\'n and Woyczynski \cite[Proposition 5.8.3]{K-W} provided a simple proof of this theorem for discrete-time martingales. More precisely, given a Hilbert space-valued martingale $(f_{j})_{j=0}^{\infty}$, by Kawapie\'n and Woyczynski's construction, one could construct an $\R^{2}$-valued martingale $(N_{j})_{j=0}^{\infty}$ satisfies that
\[
|f_{j}|=|N_{j}|~~~{\rm and}~~~|d_{j,f}|=|d_{j,N}|
\]
for all $j\in \N$. The main idea of Kwapie\'n and Woyczynski's construction is to define an $\R^{2}$-valued martingale $(N_{j})_{j=0}^{\infty}$ inductively such that $\langle d_{j,f},f_{j-1}\rangle=\langle d_{j,N}, N_{j-1}\rangle$ and $|d_{j,f}|=|d_{j,N}|$ for all $j\in \N$. Then the structure of Euclidean norm, i.e.
\begin{equation}\label{orthogonality}
|x+y|^{2}=|x|^{2}+2\langle x,y\rangle+|y^{2}|~~\forall x,~y\in \R^{2},
\end{equation}
ensures that $(N_{j})_{j=0}^{\infty}$ satisfies
\[
|N_{j}|^{2}=|N_{j-1}|^{2}+2\langle N_{j-1},d_{j,N}\rangle+|d_{j,N}|^{2}~~\forall j\in \N.
\]
Then, by induction, one gets the $\R^{2}$-valued martingale $(N_{j})_{j=0}^{\infty}$ as required. However, the Kwapie\'n and Woyczynski's approach relies on orthogonality and \eqref{orthogonality}, which are only available in Hilbert space. In their remarkable work on Markov type and  threshold embeddings of metric spaces, Ding, Lee and Peres
generalized the Kwapie\'n and Woyczynski's method by providing a ``martingale dimension reduction'' argument for martingales with values in general $p$-uniformly smooth space. The key insight here, due to Ding, Lee and Peres, is that for martingales with values in $p$-uniformly smooth space, one can use the inequality \eqref{constant} instead of \eqref{orthogonality} (which is only available for Hilbert space) in the construction. And if one could control $N_{j}-N_{j-1}$ appropriately during their construction, then the constructed $\R^{2}$-valued martingale $(N_{j})_{j=0}^{\infty}$ will satisfy the desired properties (see \cite[Lemma 4.2]{D-L-P} for the details).

Our following lemma is inspired by the construction of Ding, Lee and Peres \cite[Lemma 4.2]{D-L-P} and is even simpler than that in \cite{D-L-P} with better constants. However, our approach is only available for certain types of martingales (such as dyadic martingales, martingales with bounded differences). Before turning into the technical details we shall describe the main idea of our construction.

Given a $p$-uniformly smooth space-valued martingale $(f_{j})_{j=0}^{\infty}$ with $f_{0}=0~a.s.$ we shall put $N_{0}=0~a.s.$ and construct $(N_{j})_{j=0}^{\infty}$ inductively as follows. Suppose that $(N_{j})_{j=0}^{n-1}$ have been constructed, we define $N_{j}$ as the sum of two orthogonal components. The first component is to ensure that $2\langle N_{j-1},N_{j}-N_{j-1}\rangle$ ``roughly'' equals to $p\langle J_{p}(f_{j-1}),f_{j}-f_{j-1}\rangle$ (see \eqref{preserves inner product} below). However, by some intrinsic differences between $p$-uniformly smooth spaces and Hilbert space, these two terms as above could not equal to each other in general. Hence, an appropriate truncation will be used to make sure that the constructed $N_{j}$ ``under controlled''. And in order to compensate the information lost by truncation, we shall put the lost information to the second component (the orthogonal part of the first component). Finally, to make the constructed sequence $(N_{j})_{j=0}^{\infty}$ still a martingale, we shall enlarge the filtration and attach a random sign in an appropriate way.

To organize our proof better, for $x=(x_{1},x_{2})\in \R^{2}$, we define $x^{\bot}=(x_{1}^{\bot},x_{2}^{\bot})$ as follows.
\begin{equation}\nonumber
(x^{\bot}_{1},x^{\bot}_{2})=
\begin{cases}
(-x_{2},x_{1})~~~~~{\rm if}~x\not=0,\\
(1,0),~~~~~~~~~~{\rm otherwise}.
\end{cases}
\end{equation}
It is clear that $\langle x,x^{\bot}\rangle=0$ for all $x\in \R^{2}$.
\begin{lem}\label{martingale dimension reduction lemma}
Let $f=(f_{j})_{j=0}^{\infty}$ be a martingale \big{(}relatively to the filtration $(\mathscr{F}_{j})_{j=0}^{\infty}$\big{)} with values in $p$-uniformly smooth space $X$ ($1<p\leq 2$) such that there exists a nonnegative predictable sequence $w=(w_{j})_{j=1}^{\infty}$ with $\|d_{j,f}\|\leq w_{j}~a.s.$ for all $j\in\N$. Then, there exists an $\R^{2}$-valued martingale $(N_{j})_{j=0}^{\infty}$ (relatively to another filtration) such that for all $j\in\N$,
\begin{enumerate}[i)]
\item $\|f_{j}-f_{0}\|^{p}\leq |N_{j}-N_{0}|^{2}$,
\item $|N_{j}-N_{j-1}|^{2}\leq K w_{j}^{p}$,
\end{enumerate}
where $K=O(s_{p}(X))$.
\end{lem}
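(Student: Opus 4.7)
My plan is to construct $(N_{j})_{j=0}^{\infty}$ by induction, adapting the Ding--Lee--Peres scheme so as to exploit the pointwise bound $\|d_{j,f}\|\leq w_{j}$. After translating we may take $f_{0}=0=N_{0}$, and the inductive claim will package property i) as the stronger statement $|N_{j}|^{2}\geq\|f_{j}\|^{p}$. To have room for a symmetric ``orthogonal kick'' at each step, I enlarge the filtration by an independent Rademacher sequence $(\epsilon_{j})_{j=1}^{\infty}$ and set $\mathscr{G}_{j}=\mathscr{F}_{j}\vee\sigma(\epsilon_{1},\ldots,\epsilon_{j})$; the $\epsilon_{j}$ will supply the randomness that Hilbert-space orthogonality provided for free in Kwapie\'n--Woyczynski's construction.

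The increment will be expressed in the frame $(e_{j-1},e_{j-1}^{\perp})$ with $e_{j-1}=N_{j-1}/|N_{j-1}|$ (using the convention stated just before the lemma when $N_{j-1}=0$), so that
\[
d_{j,N}=a_{j}\,e_{j-1}+b_{j}\,e_{j-1}^{\perp},\qquad |N_{j}|^{2}=|N_{j-1}|^{2}+2|N_{j-1}|a_{j}+a_{j}^{2}+b_{j}^{2}.
\]
Set $\xi_{j}:=p\langle J_{p}(f_{j-1}),d_{j,f}\rangle$, so Theorem \ref{equivalent smooth and dual mapping} gives $\|f_{j}\|^{p}\leq\|f_{j-1}\|^{p}+\xi_{j}+cw_{j}^{p}$ with $c=O(s_{p}(X))$, while $\E_{j-1}\xi_{j}=0$ and $|\xi_{j}|\leq p\,\|f_{j-1}\|^{p-1}w_{j}$. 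The naive choice $a_{j}=\xi_{j}/(2|N_{j-1}|)$, $b_{j}=\epsilon_{j}\sqrt{c w_{j}^{p}}$ would close the induction directly, but $a_{j}$ may blow up when $|N_{j-1}|$ is small. I therefore truncate $\alpha_{j}:=\xi_{j}/(2|N_{j-1}|)$ at level $L_{j}=K_{0}w_{j}^{p/2}$ for a universal constant $K_{0}$ to be fixed, take $a_{j}$ to be this truncation minus its $\mathscr{F}_{j-1}$-conditional mean (which restores $\E_{j-1}a_{j}=0$), and then inflate the perpendicular term to $b_{j}=\epsilon_{j}\sqrt{c w_{j}^{p}+R_{j}}$, where the $\mathscr{F}_{j}$-measurable deficit $R_{j}\geq 0$ absorbs the contribution lost both by truncation and by the recentering, and is $O(w_{j}^{p})$.

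Three things will need to be verified. First, boundedness, $a_{j}^{2}+b_{j}^{2}\leq Kw_{j}^{p}$ with $K=O(s_{p}(X))$, from $|a_{j}|\leq 2L_{j}$ and $R_{j}=O(w_{j}^{p})$. Second, the $(\mathscr{G}_{j})$-martingale property, from the centering of $a_{j}$ and the independence of $\epsilon_{j}$ from $\mathscr{G}_{j-1}\vee\mathscr{F}_{j}$. Third, the crucial lower bound $|N_{j}|^{2}\geq\|f_{j}\|^{p}$: on the event where no truncation occurs, $2|N_{j-1}|a_{j}=\xi_{j}$ and $b_{j}^{2}\geq c w_{j}^{p}$, which together with the $p$-smoothness expansion yields the claim; on the truncation event, $|\alpha_{j}|>L_{j}$ forces $\|f_{j-1}\|$ to be small relative to $w_{j}$ (because $|\alpha_{j}|\leq \tfrac{p}{2}w_{j}\|f_{j-1}\|^{p/2-1}$ by $|N_{j-1}|\geq\|f_{j-1}\|^{p/2}$), and then $\|f_{j}\|^{p}\leq(\|f_{j-1}\|+w_{j})^{p}=O(w_{j}^{p})$, so the inflated perpendicular term $b_{j}^{2}$ alone dominates. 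The main obstacle I expect is calibrating $K_{0}$ and $R_{j}$ simultaneously so that these three inequalities all close up with a single constant $K=O(s_{p}(X))$; the balancing point is exactly where $\|f_{j-1}\|\asymp w_{j}$, and the homogeneity $|\xi_{j}|\leq p\|f_{j-1}\|^{p-1}w_{j}$ should be the quantitative input that makes this calibration possible.
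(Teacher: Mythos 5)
Your plan is essentially the paper's proof (both adapt the Ding--Lee--Peres scheme): enlarge the filtration by independent Rademachers, split $d_{j,N}$ into a component along $N_{j-1}$ driven by $\alpha_j=p\langle J_p(f_{j-1}),d_{j,f}\rangle/(2|N_{j-1}|)$ and an orthogonal kick of size $\asymp w_j^{p/2}$, truncate the parallel part when $|N_{j-1}|$ is small relative to $w_j$, and let the orthogonal part absorb the loss; the quantitative engine $|p\langle J_p(f_{j-1}),d_{j,f}\rangle|\le p\|f_{j-1}\|^{p-1}w_j\le p|N_{j-1}|^{2(p-1)/p}w_j$ is identical. The only substantive difference is the truncation device, and it is the one place your write-up does not quite close. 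The paper multiplies $\alpha_j$ by $\mathds{1}_{A_{j-1}}$ with $A_{j-1}=\{w_j^p\le|N_{j-1}|^2\}$, which is measurable at time $j-1$ in the enlarged filtration, so the parallel term keeps zero conditional mean and no recentering is needed (and the orthogonal coefficient can be the deterministic $\sqrt{c+p}\,w_j^{p/2}$). Your value-clipping $T_{L_j}(\alpha_j)$ is not predictable, so you recentre, and then your claim that ``on the event where no truncation occurs, $2|N_{j-1}|a_j=\xi_j$'' is false as stated: $a_j$ carries the extra term $-\E_{j-1}\left[T_{L_j}(\alpha_j)\right]$, which need not vanish on that event. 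The repair is contained in your own homogeneity bound: since $|\alpha_j|\le\frac p2 w_j\|f_{j-1}\|^{p/2-1}\le\frac p2 w_j|N_{j-1}|^{(p-2)/p}$ is dominated by a quantity measurable at time $j-1$, whether clipping can occur at all is decided predictably; on the predictable event where it cannot, $T_{L_j}(\alpha_j)=\alpha_j$ conditionally a.s.\ and the recentering term vanishes identically, while on the complement $\|f_{j-1}\|=O(w_j)$, so the clipping loss, the recentering bias, and $\|f_j\|^p$ itself are all $O(w_j^p)$ and are absorbed by $R_j$. With that observation (and $K_0\ge p$, which also makes the degenerate case $p=2$ truncation-free), your three verifications do close with $K=O(s_p(X))$; the paper's indicator truncation simply sidesteps the recentering issue altogether.
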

\begin{proof}
Let $(\varepsilon_{j})_{j=1}^{\infty}$ be a sequence of independent Rademacher random variables and enlarge $\mathscr{F}_{j}$ by $\mathscr{F}^{\prime}_{j}=\sigma\big{(}\mathscr{F}_{j},\varepsilon_{1},\cdots,\varepsilon_{j} \big{)}$, i.e.
the $\sigma$-algebra generated by $\mathscr{F}_{j}$ and Rademacher random variables $\{\varepsilon_{1},\cdots,\varepsilon_{j}\}$ for all $j\in \N$. Then, it is clear that $(f_{j})_{j=0}^{\infty}$ is still a martingale and $(w_{j})_{j=1}^{\infty}$ is still predictable relatively to the new filtration $(\mathscr{F}^{\prime}_{j})_{j=0}^{\infty}$.


We now construct the martingale $(N_{j})_{j=0}^{\infty}$ inductively as follows. Without loss of generality we assume that $f_{0}=0~a.s.$ and put $N_{0}\coloneqq 0~a.s.$. Suppose that $(N_{n})_{n=0}^{j-1}$ have been constructed, then, let $A_{j-1}=\{w_{j}^{p}\leq |N_{j-1}|^{2}\}$ (a suitable truncation, which in order to control the constructed martingale appropriately, see \eqref{preserves inner product}, \eqref{estimate N 2} and \eqref{upper bound for differences} below) and define

\begin{equation}\label{definition of martingale}
\begin{split}
N_{j}-N_{j-1}&=\frac{p\langle J_{p}(f_{j-1}),f_{j}-f_{j-1}\rangle \cdot\mathds{1}_{A_{j-1}}}{2}\cdot\frac{N_{j-1}}{|N_{j-1}|^{2}+\mathds{1}_{\{N_{j-1}=0\}}}\\
             &+\sqrt{c+p}\cdot\varepsilon_{j}w_{j}^{\frac{p}{2}}\cdot\frac{N_{j-1}^{\bot}}{|N_{j-1}|+\mathds{1}_{\{N_{j-1}=0\}}}.
\end{split}
\end{equation}

Since $\langle N_{j-1},N_{j-1}^{\bot}\rangle=0$ and $A_{j-1}=\{w_{j}^{p}\leq |N_{j-1}|^{2}\}$ , then, as we expect,
\begin{equation}\label{preserves inner product}
2\langle N_{j-1},N_{j}-N_{j-1}\rangle=p\langle J_{p}(f_{j-1}),f_{j}-f_{j-1}\rangle\cdot\mathds{1}_{A_{j-1}}.
\end{equation}

Firstly, we verify that $(N_{j})_{j=0}^{\infty}$ is a martingale relatively to the filtration $(\mathscr{F}_{j}^{\prime})_{j=0}^{\infty}$.
Indeed, note here that $w_{j}$ is $\mathscr{F}^{\prime}_{j-1}$-measurable and $\varepsilon_{j}$ is independent of $\mathscr{F}_{j-1}^{\prime}$. Then, by the linearity of conditional expectation, we have
\begin{equation}\nonumber
\begin{split}
\E(N_{j}-N_{j-1}|\mathscr{F}^{\prime}_{j-1})&=\frac{p}{2}N_{j-1}\cdot\E\Big{(}\frac{\langle J_{p}(f_{j-1}),f_{j}-f_{j-1}\rangle \cdot\mathds{1}_{A_{j-1}}}{|N_{j-1}|^{2}+\mathds{1}_{\{N_{j-1}=0\}}}|\mathscr{F}^{\prime}_{j-1}\Big{)}\\
                                            &+\sqrt{c+p}\cdot w_{j}^{\frac{p}{2}}\E(\varepsilon_{j})\cdot \frac{N_{j-1}^{\bot}}{|N_{j-1}|+\mathds{1}_{\{N_{j-1}=0\}}}\\
                           &=\frac{p}{2}N_{j-1}\cdot\frac{\langle J_{p}(f_{j-1}),\E(f_{j}-f_{j-1}|\mathscr{F}^{\prime}_{j-1})\rangle\cdot \mathds{1}_{A_{j-1}}}{|N_{j-1}|^{2}+\mathds{1}_{\{N_{j-1}=0\}}}\\
                           &=0,
\end{split}
\end{equation}
for all $j\in\N$, which yields that $(N_{j})_{j=0}^{\infty}$ is a martingale relatively to the filtration $(\mathscr{F}^{\prime}_{j})_{j=0}^{\infty}$.

Secondly, we shall verify that the $\R^{2}$-valued martingale $(N_{j})_{j=0}^{\infty}$ satisfies the desired properties $i)$ and $ii)$. In order to do this the proof will be divided into two cases.

{\bf Case {\uppercase\expandafter{\romannumeral1}}.} On the event $\{\omega\in \Omega:N_{j-1}(\omega)\not=0\}$.

Since $X$ is a $p$-uniformly smooth space, then, Theorem \ref{equivalent smooth and dual mapping} yields that for all $j\in \N$
\begin{equation}\label{p-uniform smoothness}
\begin{split}
\|f_{j}\|^{p}&\leq \|f_{j-1}\|^{p}+p\langle J_{p}(f_{j-1}),f_{j}-f_{j-1}\rangle+c\|d_{j,f}\|^{p}\\
             &\leq \|f_{j-1}\|^{p}+p\langle J_{p}(f_{j-1}),f_{j}-f_{j-1}\rangle+cw_{j}^{p},
\end{split}
\end{equation}
where $c=O(s_{p}(X))$.

Suppose that $i)$ holds true for all $k=0,\cdots,j-1$, then \eqref{p-uniform smoothness} implies that
\begin{equation}\label{right hand side}
\|f_{j}\|^{p}\leq |N_{j-1}|^{2}+p\langle J_{p}(f_{j-1}),f_{j}-f_{j-1}\rangle+cw_{j}^{p},
\end{equation}
with $c=O(s_{p}(X))$.

On the other hand, since $\langle N_{j-1},N_{j-1}^{\bot}\rangle=0$, we have
\begin{equation}\label{estimate N 1}
\begin{split}
|N_{j}|^{2}&=|N_{j-1}|^{2}\Big{(} 1+\frac{p}{2}\cdot\frac{\langle J_{p}(f_{j-1}),f_{j}-f_{j-1}\rangle \cdot\mathds{1}_{A_{j-1}}}{|N_{j-1}|^{2}} \Big{)}^{2}+(c+p)w_{j}^{p}\\
                 &\geq |N_{j-1}|^{2}\Big{(}1+p\cdot\frac{\langle J_{p}(f_{j-1}),f_{j}-f_{j-1}\rangle \cdot\mathds{1}_{A_{j-1}}}{|N_{j-1}|^{2}} \Big{)}+(c+p)w_{j}^{p}\\
                 &=|N_{j-1}|^{2}+p\cdot\langle J_{p}(f_{j-1}),f_{j}-f_{j-1}\rangle \cdot\mathds{1}_{A_{j-1}}+(c+p)w_{j}^{p}.
\end{split}
\end{equation}
Then, it now suffices to estimate the term $p\langle J_{p}(f_{j-1}),f_{j}-f_{j-1}\rangle \cdot\mathds{1}_{A_{j-1}^{c}}$, where $A_{j-1}^{c}$ is the complementary set of $A_{j-1}$. Note that $A_{j-1}^{c}=\{w_{j}^{p}>|N_{j-1}|^{2}\}$, we have
\begin{equation}\label{estimate N 2}
\begin{split}
p\langle J_{p}(f_{j-1}),f_{j}-f_{j-1}\rangle \cdot\mathds{1}_{A_{j-1}^{c}}&\leq p\|f_{j-1}\|^{p-1}\|f_{j}-f_{j-1}\|\cdot\mathds{1}_{A_{j-1}^{c}}\\
                                                                         &\leq p|N_{j-1}|^{\frac{2(p-1)}{p}}\|f_{j}-f_{j-1}\|\cdot\mathds{1}_{A_{j-1}^{c}}\\
                                                                         &\leq pw_{j}^{p},
\end{split}
\end{equation}
Then, combining with \eqref{right hand side}, \eqref{estimate N 1} and \eqref{estimate N 2} it follows that $\|f_{j}\|^{p}\leq |N_{j}|^{2}$, which completes our proof of $i)$.

To prove that $(N_{j})_{j=0}^{\infty}$ satisfies $ii)$, we note here that $\langle N_{j-1},N_{j-1}^{\bot}\rangle=0$, then,
\begin{equation}\nonumber
|N_{j}-N_{j-1}|^{2}=\frac{p^{2}}{4}\cdot\frac{\langle J_{p}(f_{j-1}),f_{j}-f_{j-1}\rangle^{2}\cdot \mathds{1}_{A_{j-1}}}{|N_{j-1}|^{2}}+(c+p)w_{j}^{p}.
\end{equation}
By $i)$, it follows that
\begin{equation}\label{upper bound for differences}
\begin{split}
|N_{j}-N_{j-1}|^{2}&\leq \frac{p^{2}}{4}\cdot\frac{\|f_{j-1}\|^{2(p-1)}\|f_{j}-f_{j-1}\|^{2}\cdot\mathds{1}_{A_{j-1}}}{|N_{j-1}|^{2}}+(c+p)w_{j}^{p}\\
                         &\leq \frac{p^{2}}{4}\cdot\frac{|N_{j-1}|^{\frac{4(p-1)}{p}}w_{j}^{2}\cdot\mathds{1}_{A_{j-1}}}{|N_{j-1}|^{2}}+(c+p)w_{j}^{p}\\
                         &=\frac{p^{2}}{4}\cdot\frac{w_{j}^{2}\cdot\mathds{1}_{A_{j-1}}}{|N_{j-1}|^{\frac{2(2-p)}{p}}}+(c+p)w_{j}^{p},
\end{split}
\end{equation}
Note that $1<p\leq 2$ and $A_{j-1}=\{w_{j}^{p}\leq |N_{j-1}|^{2}\}$, then $\frac{4-2p}{p}\geq 0$ and \eqref{upper bound for differences} can be further estimated as follows
\begin{equation}\nonumber
\begin{split}
|N_{j}-N_{j-1}|^{2}&\leq \frac{p^{2}}{4}\cdot\frac{w_{j}^{2}}{w_{j}^{2-p}}+(c+p)w_{j}^{p}\\
                         &=\big{(}\frac{p^{2}}{4}+c+p \big{)}w_{j}^{p}
\end{split}
\end{equation}
Denote that $K=\frac{p^{2}}{4}+c+p\leq c+3$, then
\[
|N_{j}-N_{j-1}|^{2}\leq Kw_{j}^{p},~\forall j\in \N
\]
with $K=O(s_{p}(X))$.

{\bf Case {\uppercase\expandafter{\romannumeral2}}.} On the event $\{\omega\in \Omega:N_{j-1}(\omega)=0\}$, it is easy to verify that the martingale $(N_{j})_{j=0}^{\infty}$ satisfies the desired properties $i)$ and $ii)$.

Indeed, by induction, we have $\|f_{j-1}-f_{0}\|^{p}\leq |N_{j-1}-N_{0}|^{2}$, which yields that $f_{j-1}=0$ on the event $\{\omega\in\Omega:N_{j-1}(\omega)=0\}$. Hence,
\begin{equation}\nonumber
|N_{j}-N_{j-1}|^{2}=|N_{j}|^{2}=(c+p)w_{j}^{p}\geq (c+p)\|f_{j}-f_{j-1}\|^{p}\geq \|f_{j}\|^{p},
\end{equation}
which completes our proof.
\end{proof}

With the help of the Lemma \ref{martingale dimension reduction lemma} we can now prove the following Azuma-type inequality for $p$-uniformly smooth space-valued martingales.

\begin{thm}\label{predictable sequence dominated case}
Let $f=(f_{j})_{j=0}^{\infty}$ be a martingale with values in $p$-uniformly smooth space ($1<p\leq 2$) such that there exists a non-negative predictable sequence $w=(w_{j})_{j=1}^{\infty}$ with $\|d_{j,f}\|\leq w_{j}$ for all $j\in\N$. Then, there exists a constant $K$ depending only on $X$ such that for all $r\geq 0$,
\begin{equation}\nonumber
\Pb\{f^{*}\geq r\}\leq 2 \exp\left\{-\frac{r^{p}}{2K\Big{\|}\sum_{j=1}^{\infty}w_{j}^{p}\Big{\|}_{\infty}} \right\},
\end{equation}
where $K=O(s_{p}(X))$.  
\end{thm}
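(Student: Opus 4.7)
The strategy is to combine the two pillars already established in the paper: the Hilbert-space Azuma inequality of Theorem \ref{Hilbert valued martingale concentration inequality} and the martingale dimension reduction of Lemma \ref{martingale dimension reduction lemma}. The point of the reduction lemma is precisely to transfer questions about tails of $X$-valued martingales, where no moment generating function technology is directly available, to questions about tails of $\mathbb{R}^2$-valued martingales, where the parallelogram-law argument of Theorem \ref{Hilbert valued martingale concentration inequality} applies.

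Concretely, I would begin by assuming without loss of generality that $f_0 = 0$ and invoking Lemma \ref{martingale dimension reduction lemma} to produce an $\mathbb{R}^2$-valued martingale $N = (N_j)_{j=0}^\infty$ (on the enlarged filtration $(\mathscr{F}^{\prime}_j)$) with $N_0 = 0$ such that
\[
\|f_j\|^p \leq |N_j|^2 \qquad \text{and} \qquad |N_j - N_{j-1}|^2 \leq K w_j^p
\]
for every $j \in \mathbb{N}$, where $K = O(s_p(X))$. Set $\tilde w_j := \sqrt{K}\, w_j^{p/2}$; since $w_j$ is $\mathscr{F}_{j-1}$-measurable and therefore also $\mathscr{F}^{\prime}_{j-1}$-measurable, the sequence $(\tilde w_j)$ is non-negative and predictable relatively to $(\mathscr{F}^{\prime}_j)$, and $|d_{j,N}| \leq \tilde w_j$ almost surely.

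Next I would relate the maximal functions. From $\|f_j\|^p \leq |N_j|^2$ it is immediate that $\{\|f_j\| \geq r\} \subseteq \{|N_j| \geq r^{p/2}\}$ for each $j$, and hence
\[
\{f^* \geq r\} \subseteq \{N^* \geq r^{p/2}\},
\]
where $N^* = \sup_{j} |N_j - N_0|$. Applying Theorem \ref{Hilbert valued martingale concentration inequality} to the $\mathbb{R}^2$-valued (hence Hilbert-space-valued) martingale $N$ with the predictable envelope $\tilde w$ and with
\[
b = \Bigl\|\sum_{j=1}^{\infty} \tilde w_j^2 \Bigr\|_\infty = K \Bigl\|\sum_{j=1}^{\infty} w_j^p \Bigr\|_\infty,
\]
one obtains
\[
\mathbb{P}\{N^* \geq r^{p/2}\} \leq 2 \exp\!\left\{-\frac{(r^{p/2})^2}{2b}\right\} = 2\exp\!\left\{-\frac{r^p}{2K\bigl\|\sum_{j=1}^\infty w_j^p\bigr\|_\infty}\right\},
\]
and the conclusion follows.

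Because the two heavy-lifting ingredients are already in place, I do not expect a serious obstacle. The only delicate point to verify carefully is that the dimension reduction delivers a \emph{pathwise} inequality $\|f_j - f_0\|^p \leq |N_j - N_0|^2$ (so that the inclusion of events above is valid), and that the predictability of $(w_j)$ survives the filtration enlargement used in the construction of $N$ (so that Theorem \ref{Hilbert valued martingale concentration inequality} is legitimately applicable with the envelope $\tilde w_j$). Both of these are built into the statement of Lemma \ref{martingale dimension reduction lemma}, so after checking them the remainder of the argument is the short chain of inequalities displayed above.
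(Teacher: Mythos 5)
Your proposal is correct and follows essentially the same route as the paper's own proof: apply Lemma \ref{martingale dimension reduction lemma} to obtain the dominated $\mathbb{R}^{2}$-valued martingale $N$, feed the predictable envelope $\sqrt{K}\,w_j^{p/2}$ into Theorem \ref{Hilbert valued martingale concentration inequality}, and transfer the tail bound back via the inclusion $\{f^{*}\geq r\}\subseteq\{N^{*}\geq r^{p/2}\}$. Your explicit verification that predictability of $(w_j)$ survives the filtration enlargement is a detail the paper leaves implicit, but the argument is the same.
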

\begin{proof}
Since $w=(w_{j})_{j=0}^{\infty}$ is a predictable sequence, then apply Lemma \ref{martingale dimension reduction lemma}, there exists an $\R^{2}$-valued martingale $N=(N_{j})_{j=0}^{\infty}$ (relative another filtration) satisfying
\begin{enumerate}[i)]
\item $\|f_{j}\|^{p}\leq |N_{j}|^{2}$,
\item $|N_{j}-N_{j-1}|^{2}\leq K w_{j}^{p}$,
\end{enumerate}
for all $j\in \N$, where $K=O(s_{p}(X))$.

By Theorem \ref{Hilbert valued martingale concentration inequality}, we have for all $r\geq 0$,
\begin{equation}\label{Hilbert space valued setting}
\Pb\{N^{*}\geq r\}\leq 2\exp\left\{-\frac{r^{2}}{2K\Big{\|}\sum_{j=1}^{\infty}w_{j}^{p}\Big{\|}_{\infty}} \right\}.
\end{equation}
By $i)$ we have $\{\|f_{n}\|\geq t^{\frac{2}{p}}\}\subseteq\{|N_{n}|^{2}\geq t^{2}\}=\{|N_{n}|\geq t\}$ for all $n\in\N$ and $t>0$. Therefore, for $r>0$, we have  $\{\|f_{n}\|\geq r\}\subseteq\{\|N_{n}\|\geq r^{\frac{p}{2}}\}$ and hence, combining with \eqref{Hilbert space valued setting}, it follows that
\begin{equation}
\Pb\{f^{*}\geq r\}\leq 2\exp\left\{-\frac{r^{p}}{2K\Big{\|}\sum_{j=1}^{\infty}w_{j}^{p}\Big{\|}_{\infty}} \right\}.
\end{equation}
This completes our proof.
\end{proof}
\begin{rem}
For the case $\Big{\|}\sum_{j=1}^{\infty}w_{j}^{p} \Big{\|}_{\infty}=\infty$ the inequality \ref{Azuma type} holds true trivially.
\end{rem}
The following two inequalities are easy corollaries of Theorem \ref{predictable sequence dominated case}. The first one is a generalization of Pinelis' inequality for $2$-uniformly smooth space-valued martingales \cite{Pin1} and the second one can be viewed as an improvement of Naor's Azuma inequality.
\begin{cor}\label{refinement of Azuma inequality}
Suppose that $f=(f_{j})_{j=0}^{\infty}$ is a conditionally symmetric martingale with values in a $p$-uniformly smooth space $X$ ($1<p\leq 2$). If $b$ is such that $b\geq\|S_{p}^{p}(f)\|_{\infty}$, then the following inequality holds for all $r\geq 0$,
\begin{equation}\nonumber
\Pb\{f^{*}\geq r\}\leq 2\exp\Big{\{}-\frac{r^{p}}{2Kb}\Big{\}},
\end{equation}
where $K$ depends only on $X$.
\end{cor}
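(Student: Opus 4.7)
The plan is to reduce to Theorem \ref{predictable sequence dominated case} by exploiting the enlarged filtration provided by Proposition \ref{Burkholder's replace filtration argument}. The difficulty is that the hypothesis gives only a pathwise bound on $\sum_j \|d_{j,f}\|^p$, not a pointwise bound of each $\|d_{j,f}\|$ by a predictable quantity; in the original filtration $(\mathscr{F}_j)$ the difference $\|d_{j,f}\|$ is generally \emph{not} $\mathscr{F}_{j-1}$-measurable, so Theorem \ref{predictable sequence dominated case} cannot be applied directly.

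The observation that unlocks the proof is that conditional symmetry is precisely what is needed to make the norm of the next difference predictable. By Proposition \ref{Burkholder's replace filtration argument}, $f$ remains a martingale relative to the enlarged filtration
\[
\mathscr{G}_{j}=\sigma(d_{0,f},\dots,d_{j,f},\|d_{j+1,f}\|),\qquad j\in \N_{0}.
\]
Relative to $(\mathscr{G}_j)_{j=0}^\infty$, the sequence $w_{j}\coloneqq \|d_{j,f}\|$ is non-negative and predictable, since $\|d_{j,f}\|$ is $\mathscr{G}_{j-1}$-measurable by the very construction of the filtration. Moreover, the pathwise inequality $\|d_{j,f}\|\leq w_{j}$ is trivially satisfied (with equality).

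I would then simply invoke Theorem \ref{predictable sequence dominated case} with this choice of $w=(w_j)_{j=1}^\infty$. Since
\[
\sum_{j=1}^{\infty} w_{j}^{p}=\sum_{j=1}^{\infty}\|d_{j,f}\|^{p}=S_{p}^{p}(f),
\]
the hypothesis $b\geq \|S_{p}^{p}(f)\|_{\infty}$ yields $\bigl\|\sum_{j=1}^\infty w_j^p\bigr\|_\infty \leq b$, and Theorem \ref{predictable sequence dominated case} produces
\[
\Pb\{f^{*}\geq r\}\leq 2\exp\Bigl\{-\frac{r^{p}}{2Kb}\Bigr\}
\]
with $K=O(s_{p}(X))$, which is exactly the claimed estimate. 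The main conceptual step is therefore the passage to the enlarged filtration, after which everything is routine; no further geometric or analytic work specific to conditional symmetry is required.
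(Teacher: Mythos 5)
Your proposal is correct and follows exactly the paper's own argument: apply Proposition \ref{Burkholder's replace filtration argument} to pass to the enlarged filtration $(\mathscr{G}_{j})_{j=0}^{\infty}$ under which $w_{j}=\|d_{j,f}\|$ becomes predictable, and then invoke Theorem \ref{predictable sequence dominated case} with $\bigl\|\sum_{j}w_{j}^{p}\bigr\|_{\infty}=\|S_{p}^{p}(f)\|_{\infty}\leq b$. No gaps.
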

\begin{proof}
Since $f=(f_{j})_{j=0}^{\infty}$ is a conditionally symmetric martingale with valued in a $p$-uniformly smooth space $X$, then, by Proposition \ref{Burkholder's replace filtration argument}, there exists a filtration $(\mathscr{G}_{j})_{j=0}^{\infty}$ such that $f=(f_{j})_{j=0}^{\infty}$ is a martingale and $(\|d_{j,f}\|)_{j=1}^{\infty}$ is predictable relatively to the filtration $(\mathscr{G}_{j})_{j=0}^{\infty}$.

Then, by Theorem \ref{predictable sequence dominated case}, we have for all $r\geq 0$,
\begin{equation}\nonumber
\Pb\{f^{*}\geq r\}\leq 2\exp\Big{\{}-\frac{r^{p}}{2Kb} \Big{\}},
\end{equation}
where $K=O(s_{p}(X))$. This completes our proof.
\end{proof}


\begin{cor}
Let $f=(f_{j})_{j=0}^{\infty}$ be martingale with values in $p$-uniformly smooth Banach space $X$ ($1<p\leq 2$). If $b$ is such that $b\geq\sum_{j=1}^{\infty}\|d_{j,f}\|_{\infty}^{p}$, then the following inequality holds for all $r\geq 0$,
\begin{equation}\nonumber
\Pb\{f^{*}\geq r\}\leq 2\exp\Big{\{}-\frac{r^{p}}{2Kb} \Big{\}},
\end{equation}
where $K$ depends only on $X$.
\end{cor}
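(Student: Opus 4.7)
The plan is to reduce this corollary to Theorem \ref{predictable sequence dominated case} by choosing the deterministic dominating sequence $w_j := \|d_{j,f}\|_\infty$. Since each $w_j$ is a constant (an $L^\infty$-norm), the sequence $w = (w_j)_{j=1}^{\infty}$ is trivially predictable relative to any filtration, and by the very definition of the $L^\infty$-norm one has $\|d_{j,f}\| \leq w_j$ almost surely for every $j \in \N$. Thus the hypotheses of Theorem \ref{predictable sequence dominated case} are satisfied.

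With this choice, the quantity appearing in the denominator of the Azuma-type bound becomes
\[
\Bigl\|\sum_{j=1}^{\infty} w_j^p \Bigr\|_{\infty} = \sum_{j=1}^{\infty} \|d_{j,f}\|_{\infty}^{p} \leq b,
\]
because a sum of constants is itself a constant and hence equal to its own essential supremum. Plugging this into the conclusion of Theorem \ref{predictable sequence dominated case} yields
\[
\Pb\{f^{*} \geq r\} \leq 2\exp\!\Bigl\{-\frac{r^{p}}{2K\bigl\|\sum_{j=1}^{\infty} w_j^p\bigr\|_{\infty}}\Bigr\} \leq 2\exp\!\Bigl\{-\frac{r^{p}}{2Kb}\Bigr\},
\]
with the same constant $K = O(s_p(X))$ furnished by Theorem \ref{predictable sequence dominated case}.

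There is essentially no obstacle here: the corollary is the ``deterministic envelope'' specialization of the predictable-envelope result, and every step is routine. The only mild point to keep in mind is the degenerate case when $\sum_{j=1}^{\infty}\|d_{j,f}\|_{\infty}^{p} = \infty$, in which the claimed inequality is vacuous (as noted in the remark following Theorem \ref{predictable sequence dominated case}), so one may assume the sum is finite without loss of generality.
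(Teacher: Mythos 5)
Your proposal is correct and coincides with the paper's own argument: the paper likewise sets $w_{j}=\|d_{j,f}\|_{\infty}$, notes that this deterministic sequence is predictable with $\|d_{j,f}\|\leq w_{j}$, and invokes Theorem \ref{predictable sequence dominated case}. Your write-up simply spells out the routine verification in more detail.
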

\begin{proof}
Obviously $\big{(}\|d_{j,f}\|_{\infty} \big{)}_{j=0}^{\infty}$ is a predictable sequence such that $\|d_{j,f}\|\leq\|d_{j,f}\|_{\infty}$ for each $j\in\N$.
\end{proof}

Theorem \ref{predictable sequence dominated case} states that certain type of Azuma inequality holds for martingales with values in $p$-uniformly smooth space and it is natural to consider the inverse question: Is the $p$-uniform smoothness of the image space necessary for Azuma-type inequalities to hold? We shall answer this question affirmatively in the following. Before doing this we now introduce the definition of ``Azuma type'' for Banach spaces.
\begin{defn}
A Banach space $X$ is said to have \emph{Azuma type} $p$, if there exists a constant $K>0$ depending on $X$ such that for every martingale $f=(f_{j})_{j=0}^{\infty}$ with values in $X$ and every predictable sequence $w=(w_{j})_{j=1}^{\infty}$ with $\|d_{j,f}\|\leq w_{j}$ for all $j\in\N$, then the following inequality holds, for $r>0$,
\begin{equation}\label{Azuma type}
\Pb\{f^{*}\geq r\}\leq 2\exp\left\{-\frac{r^{p}}{K \Big{\|}\sum_{j=1}^{\infty}w_{j}^{p}\Big{\|}_{\infty}} \right\}.
\end{equation}
The smallest constant $K$ such that inequality \eqref{Azuma type} holds is called the Azuma type constant of $X$ denoted by $K_{p,X}$.
\end{defn}
\begin{rem}
The constant ``$2$'' that appears in the definition of ``\emph{Azuma type}'' does not have any special meaning, which can be replaced by any other absolute positive constants.
\end{rem}

In other word, the Theorem \ref{predictable sequence dominated case} asserts that $p$-uniformly smooth spaces are of Azuma type $p$. We now at the position to prove that a Banach space $X$ is of Azuma type $p$ then it must be $p$-uniformly smooth, up to a linear isomorphism. Our proof of this statement is based on a ``\emph{good-$\lambda$ inequality}'' of Burkholder \cite{Bu1}.

\begin{lem}\label{good lambda inequality}
Let $X$ be a Banach space with Azuma type $p$ with constant $K_{p,X}$, then for every conditionally symmetric martingale $f=(f_{j})_{j=0}^{\infty}$ \big{(}relatively to the filtration $(\mathscr{F}_{j})_{j=0}^{\infty}$\big{)} the following inequality holds. For $\beta>0$ and $0<\delta<\beta-1$ we have
\begin{equation}\nonumber
\Pb\{f^*>\beta\lambda, S_{p}(f)\leq \delta \lambda\}\leq 2 \exp\left\{-\frac{(\beta-1-\delta)^{p}}{K_{p,X} \delta^{p}} \right\}\Pb\{f^*>\lambda\},
\end{equation}
for all $\lambda>0$.
\end{lem}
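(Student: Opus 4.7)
The plan is a Burkholder-style good-$\lambda$ argument executed via two stopping times, reducing the statement to the Azuma-type hypothesis applied to a suitably shifted and stopped martingale. Conditional symmetry is used only once, to guarantee that the differences of the shifted martingale are dominated by a predictable sequence, via Proposition~\ref{Burkholder's replace filtration argument}.

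Assume $f_{0}=0$ and pass to the enlarged filtration $\mathscr{G}_{j}\coloneqq\sigma(d_{0,f},\ldots,d_{j,f},\|d_{j+1,f}\|)$ afforded by Proposition~\ref{Burkholder's replace filtration argument}, so that $f$ is still a martingale and $(\|d_{j,f}\|)_{j=1}^{\infty}$ is $(\mathscr{G}_{j})$-predictable. Introduce the first-crossing time $\mu\coloneqq\inf\{n\geq 0:\|f_{n}\|>\lambda\}$, a $(\mathscr{G}_{j})$-stopping time satisfying $\{\mu<\infty\}=\{f^{*}>\lambda\}$. On the event $E\coloneqq\{f^{*}>\beta\lambda,\,S_{p}(f)\leq\delta\lambda\}$, the inequality $\|d_{\mu,f}\|^{p}\leq S_{p}(f)^{p}\leq(\delta\lambda)^{p}$ together with $\|f_{\mu-1}\|\leq\lambda$ gives $\|f_{\mu}\|\leq(1+\delta)\lambda$; consequently there must exist $n\geq\mu$ with $\|f_{n}-f_{\mu}\|>(\beta-1-\delta)\lambda$.

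On $\{\mu<\infty\}$ consider the shifted martingale $g_{n}\coloneqq f_{\mu+n}-f_{\mu}$ with respect to $\tilde{\mathscr{G}}_{n}\coloneqq\mathscr{G}_{\mu+n}$; its differences $d_{n,g}=d_{\mu+n,f}$ satisfy that $\|d_{n,g}\|$ is $\tilde{\mathscr{G}}_{n-1}$-measurable. To manufacture an almost-sure cap on the $p$-norm sum, define
\[
\tau\coloneqq\inf\Bigl\{n\geq 1:\sum_{j=1}^{n}\|d_{\mu+j,f}\|^{p}>(\delta\lambda)^{p}\Bigr\},
\]
which is a $\tilde{\mathscr{G}}$-stopping time because the partial sums are $\tilde{\mathscr{G}}$-predictable, and put $\sigma\coloneqq\tau-1$. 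The stopped martingale $g^{\sigma}$ has differences dominated by the predictable sequence $v_{n}\coloneqq\|d_{n,g}\|\mathds{1}_{\{n\leq\sigma\}}$, and by construction $\bigl\|\sum_{n=1}^{\infty}v_{n}^{p}\bigr\|_{\infty}\leq(\delta\lambda)^{p}$. Moreover on $E$ we have $\sum_{j=\mu+1}^{\infty}\|d_{j,f}\|^{p}\leq(\delta\lambda)^{p}$, so $\tau=\infty$, $g^{\sigma}=g$, and consequently
\[
E\subseteq\{\mu<\infty\}\cap\{(g^{\sigma})^{*}>(\beta-1-\delta)\lambda\}.
\]

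Applying the Azuma type $p$ hypothesis to $g^{\sigma}$ conditionally on $\mathscr{G}_{\mu}$ produces
\[
\Pb\bigl\{(g^{\sigma})^{*}>(\beta-1-\delta)\lambda\,\big|\,\mathscr{G}_{\mu}\bigr\}\leq 2\exp\!\left\{-\frac{(\beta-1-\delta)^{p}}{K_{p,X}\delta^{p}}\right\}
\]
on $\{\mu<\infty\}$, after which integrating against $\mathds{1}_{\{\mu<\infty\}}$ and using $\Pb\{\mu<\infty\}=\Pb\{f^{*}>\lambda\}$ closes the argument. The main obstacle is the bookkeeping that legitimizes the conditional application of Azuma type $p$ with the stated constant — essentially a regular-conditional-distribution argument on $\{\mu<\infty\}$ — and the related need to insert the auxiliary stopping time $\sigma$ so that $\sum v_{n}^{p}$ admits a deterministic $\|\cdot\|_{\infty}$-bound rather than merely an almost-sure bound on the relevant event.
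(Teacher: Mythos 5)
Your argument is correct and is essentially the paper's proof: both are Burkholder good-$\lambda$ arguments that stop $f$ at the first crossing of $\lambda$, truncate by a predictable rule so that $S_{p}$ of the auxiliary martingale is capped by $\delta\lambda$, and then apply the Azuma type $p$ hypothesis conditionally on $\mathscr{G}_{\mu}$ to extract the factor $\Pb\{f^{*}>\lambda\}$. The only difference is packaging — the paper keeps the original time axis and uses $h_{n}=\sum_{j}\mathds{1}_{\{\mu<j\leq\nu\wedge\sigma\}}d_{j,f}$ with $\mathscr{G}_{j-1}$-measurable indicators rather than time-shifting by $\mu$, which reduces (but does not eliminate) the regular-conditional-distribution bookkeeping you flag, since the paper's final display also passes through exactly that conditional application of the hypothesis on $\{\mu<\infty\}$.
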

\begin{proof}
For conditionally symmetric martingale $f=(f_{j})_{j=0}^{\infty}$, applying Proposition \ref{Burkholder's replace filtration argument}, there exits a filtration $(\mathscr{G}_{j})_{j=0}^{\infty}$ such that $(f_{j})_{j=0}^{\infty}$ is also a martingale relatively to the filtration $(\mathscr{G}_{j})_{j=0}^{\infty}$ and $\|d_{j,f}\|$ is $\mathscr{G}_{j-1}$-measurable for each $j\in\N$. Define stopping times as follows
\[
\mu=\inf\{n\in\N_{0}:\|f_{n}\|>\lambda\},
\]
\[
\nu=\inf\{n\in\N_{0}:\|f_{n}\|>\beta\lambda\},
\]
\[
\sigma=\inf\{n\in\N_{0}:S_{p,n+1}(f)>\delta\lambda\}.
\]
Then it is clear that $\{f^{*}>\lambda\}=\{\mu<\infty\}$, $\{f^{*}>\beta\lambda\}=\{\nu<\infty\}$ and $\{S_{p}(f)\leq \delta\lambda\}=\{\sigma=\infty\}$.
Let
\begin{equation}\nonumber
h_{n}=\sum\limits_{j=0}^{n}\mathds{1}_{\{\mu<j\leq \nu\wedge\sigma\}}d_{j,f},
\end{equation}
for all $n\in \N$ and $h_{0}\coloneqq 0~a.s.$ for convenience. We now verify that $h=(h_{j})_{j=0}^{\infty}$ is a martingale relatively to the filtration $(\mathscr{G}_{j})_{j=0}^{\infty}$. Indeed, it suffices to show that $\{\mu<j\leq\nu\wedge\sigma\}$ is $\mathscr{G}_{j-1}$-measurable for each $j\in\N$. Since
\begin{equation}\nonumber
\{\mu<n\}=\bigcup\limits_{j=0}^{n-1}\{\|f_{j}\|>\lambda\},
\end{equation}
\begin{equation}\nonumber
\{n\leq \nu\}=\bigcap\limits_{j=0}^{n-1}\{\|f_{j}\|\leq \beta\lambda\},
\end{equation}
and
\begin{equation}\nonumber
\{n\leq\sigma\}=\bigcap\limits_{j=0}^{n}\{S_{p,j}(f)\leq \delta\lambda\},
\end{equation}
then by the fact that $\|d_{j,f}\|$ are $\mathscr{G}_{j-1}$-measurable we have $\{\mu<j\leq\nu\wedge\sigma\}$ is $\mathscr{G}_{j-1}$ measurable for each $j\in\N$, which entails that $h=(h_{j})_{j=0}^{\infty}$ is a martingale with $\|d_{j,h}\|$ is $\mathscr{G}_{j-1}$-measurable for each $j\in\N$.

By the definition of stopping times $\mu$, $\nu$ and $\sigma$, we have
\begin{equation}\nonumber
\{f^{*}>\beta\lambda,S_{p}(f)\leq \delta\lambda\}=\{\nu<\infty,\sigma=\infty\}\subseteq\{h^{*}>(\beta-1-\delta)\lambda\}.
\end{equation}

We now estimate the term $\|S_{p}(h)\|_{\infty}$. It is clear that $h=0~a.s.$ on $\{\mu=\infty\}$, then, by the definition of $h=(h_{j})_{j=0}^{\infty}$ and $\sigma$ we have
$S_{p}(h)\leq S_{p,\sigma}(f)\leq\delta\lambda$ on $\{\mu<\infty\}$ almost surely.

Hence, by the fact that $X$ is of Azuma type $p$ with Azum type constant $K_{p,X}$, then the following inequality holds
\begin{equation}\nonumber
\begin{split}
\Pb\{f^{*}>\beta\lambda,S_{p}(f)\leq \delta\lambda\}&\leq\Pb\{h^{*}>(\beta-1-\delta)\lambda\}\\
                                                    &=\int_{\{\mu<\infty\}}\mathds{1}_{\{h^{*}>(\beta-1-\delta)\lambda\}}d\Pb\\
                                                    &=\int\E(\mathds{1}_{\{h^{*}>(\beta-1-\delta)\lambda\}}\cdot\mathds{1}_{\{\mu<\infty\}}|\mathscr{G}_{\mu})d\Pb\\
                                                    &=\Pb\{h^{*}>(\beta-1-\delta)\lambda\}\cdot\Pb\{\mu<\infty\}\\
                                                    &\leq 2\exp\left\{-\frac{(\beta-1-\delta)^{p}}{K_{p,X}\delta^{p}}
                                                    \right\}\Pb\{f^{*}>\lambda\}.
\end{split}
\end{equation}
\end{proof}

Combining with the ``good-$\lambda$ inequality'' above and the renorming theorem of Pisier \cite{Pis1} (corresponding to Theorem \ref{Pisier's renorming theorem}), we can now assert that a Banach space which is of Azuma type $p$ must be linear isomorphic to a $p$-uniformly smooth space.
\begin{thm}\label{p-uniform smoothness is necessary}
Suppose that $X$ is a Banach space with Azuma type $p$ ($1<p\leq 2$). Then, $X$ is linear isomorphic to a $p$-uniformly smooth space.
\end{thm}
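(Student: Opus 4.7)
The plan is to combine the good-$\lambda$ inequality of Lemma \ref{good lambda inequality} with the renorming theorem of Pisier (Theorem \ref{Pisier's renorming theorem}). Since every dyadic martingale is conditionally symmetric, Lemma \ref{good lambda inequality} is applicable to all such martingales, so it suffices to verify condition iii) of Theorem \ref{Pisier's renorming theorem}: namely, to produce a constant $C$ such that every $X$-valued dyadic martingale $f=(f_n)_{n=0}^{\infty}$ satisfies
\[
\sup_{n\in\N}\E\|f_n\|^p \leq C\sum_{n=1}^{\infty}\E\|d_{n,f}\|^p.
\]

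First, I would fix such a dyadic martingale $f$, assume without loss of generality that $f_0=0~a.s.$, and reduce to the case $\E(f^*)^p<\infty$ by the standard stopping-time truncation $\tau_M=\inf\{n:\|f_n\|>M\}$, passing $M\to\infty$ only after the desired $L^p$-inequality has been established. This a priori finiteness is precisely what legitimizes the absorption step carried out below.

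Next, for parameters $\beta>1+\delta>1$ to be chosen shortly, the inclusion $\{f^*>\beta\lambda\}\subseteq\{f^*>\beta\lambda,\,S_p(f)\leq\delta\lambda\}\cup\{S_p(f)>\delta\lambda\}$ together with Lemma \ref{good lambda inequality} yields
\[
\Pb\{f^*>\beta\lambda\}\leq 2\exp\Big\{-\tfrac{(\beta-1-\delta)^{p}}{K_{p,X}\,\delta^{p}}\Big\}\Pb\{f^*>\lambda\}+\Pb\{S_p(f)>\delta\lambda\}.
\]
Multiplying by $p\lambda^{p-1}$ and integrating on $(0,\infty)$, the usual substitutions $u=\beta\lambda$ and $u=\delta\lambda$ give
\[
\beta^{-p}\,\E(f^*)^{p}\leq 2\exp\Big\{-\tfrac{(\beta-1-\delta)^{p}}{K_{p,X}\,\delta^{p}}\Big\}\E(f^*)^{p}+\delta^{-p}\E S_p(f)^{p}.
\]
I would then fix $\beta=2$ and choose $\delta>0$ small enough that the exponential factor on the right is strictly less than $\tfrac12\beta^{-p}$; this is possible because $(1-\delta)^{p}/\delta^{p}\to\infty$ as $\delta\to 0^{+}$. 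The $\E(f^*)^{p}$ term can then be absorbed into the left-hand side, producing
\[
\E(f^*)^{p}\leq C\,\E S_p(f)^{p}=C\sum_{n=1}^{\infty}\E\|d_{n,f}\|^{p},
\]
where the last equality is the definition of $S_p(f)$ and $C$ depends only on $p$ and $K_{p,X}$.

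Finally, since $\E\|f_n\|^{p}\leq \E(f^*)^{p}$ for every $n$, letting the truncation parameter $M\to\infty$ (by monotone convergence, or noting that the inequality is trivial when the right-hand side diverges) yields condition iii) of Theorem \ref{Pisier's renorming theorem}. That renorming theorem then delivers a linear isomorphism of $X$ onto a $p$-uniformly smooth space, completing the proof. The main obstacle I anticipate is precisely the a priori integrability needed for the absorption trick; this is purely technical and is handled cleanly by the stopping-time truncation, leaving the good-$\lambda$ inequality of Lemma \ref{good lambda inequality} to provide the genuine content of the argument.
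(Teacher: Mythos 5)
Your proposal is correct and follows essentially the same route as the paper: apply Lemma \ref{good lambda inequality} to dyadic (hence conditionally symmetric) martingales and then invoke Pisier's renorming theorem. The only cosmetic difference is that you integrate the good-$\lambda$ inequality by hand (for the single exponent $r=p$, with the standard truncation to justify the absorption), whereas the paper cites Burkholder's Lemma 7.1 to obtain the moment inequality $\|f^*\|_r\leq C_r\|S_p(f)\|_r$ for all $r$; the content is the same.
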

\begin{proof}
Suppose that $X$ has the Azuma type $p$ with Azuma type constant $K_{p,X}$. Then, by Lemma \ref{good lambda inequality}, for every $L^{p}$ dyadic martingale $f=(f_{j})_{j=0}^{\infty}$ with values in $X$, we have, for all $\lambda>0$,
\[
\Pb\{f^*>\beta\lambda, S_{p}(f)\leq \delta \lambda\}\leq 2\exp\left\{-\frac{(\beta-1-\delta)^{p}}{K_{p,X} \delta^{p}} \right\}\Pb\{f^*>\lambda\}.
\]

Choose $\beta=2$ and $\delta=\frac{1}{1+\big{(}(r+2)\log2 K_{p,X}\big{)}^{1/p}}$,
then, \cite[Lemma 7.1]{Bu1} yields that
\begin{equation}\nonumber
\|f^{*}\|_{r}^{r}\leq 4^{r+1}\big{(}(r+2)\log2\cdot K_{p,X}\big{)}^{\frac{r}{p}} \|S_{p}(f)\|_{r}^{r},~1\leq r<\infty.
\end{equation}
By the Pisier's renorming theorem (see Theorem \ref{Pisier's renorming theorem}), it follows that $X$ is linear isomorphic to
a $p$-uniformly smooth space.
\end{proof}
\begin{rem}
Combining with Theorem \ref{predictable sequence dominated case} and Theorem \ref{p-uniform smoothness is necessary}, we obtain a characterization of $p$-uniformly smooth spaces in terms of Azuma-type inequalities.
\end{rem}

We conclude this section by providing a further refinement of the Azuma-type inequality for Banach space-valued martingales. Observe that if $f=(f_{j})_{j=0}^{\infty}$ is a conditionally symmetric martingales with values in $p$-uniformly smooth space, then the Corollary \ref{refinement of Azuma inequality} yields that, for all $r\geq 0$,
\begin{equation}\label{Azuma inequality in normal sense}
\Pb\Big{\{}\|f_{n}-f_{0}\|\geq r \Big{\}}\leq 2\exp\left\{-\frac{r^{p}}{2K\big{\|}S_{p,n}(f)\big{\|}_{\infty}^{p}} \right\},
\end{equation}
where $K=O(s_{p}(X))$. Equivalently, \eqref{Azuma inequality in normal sense} can be restated as follows
\begin{equation}\nonumber
\Pb\left\{\frac{\|f_{n}-f_{0}\|}{\big{\|}S_{p,n}(f)\big{\|}_{\infty}}\geq r \right\}\leq 2\exp\Big{\{}-\frac{r^{p}}{2K}\Big{\}}.
\end{equation}

Note that $\left\{\frac{\|f_{n}-f_{0}\|}{\big{\|}S_{p,n}(f) \big{\|}_{\infty}}\geq r \right\}\subseteq \left\{\frac{\|f_{n}-f_{0}\|}{S_{p,n}(f)}\geq r \right\}$ for all $r\geq 0$, hence, the following Azuma-type inequality for self-normalized sums is an improvement of the classical Azuma inequality for conditionally symmetric martingales.

\begin{thm}\label{refinement of Azuma type inequality for self-normaized sums}
Let $f=(f_{n})_{n=0}^{\infty}$ be a conditionally symmetric martingale with values in $p$-uniformly smooth Banach space. Then, for every $r\geq 0$, the following inequality holds
\begin{equation}\label{self-normalized Auama type inequality}
\Pb\left\{\frac{\|f_{n}-f_{0}\|}{S_{p,n}(f)}\geq r \right\}\leq 4\exp\Big{\{}-\frac{r^{p}}{2K} \Big{\}},
\end{equation}
for all $n\in \N$, where $K=O(s_{p}(X))$.
\end{thm}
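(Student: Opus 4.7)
The strategy I would use is a Rademacher randomization of the martingale differences, which lets one fix the norms $(\|d_{j,f}\|)_{j\leq n}$ as deterministic data and then invoke the Azuma-type inequality already established in Corollary \ref{refinement of Azuma inequality} conditionally.

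First I would introduce, on an auxiliary probability space, a sequence $(\varepsilon_j)_{j=1}^{\infty}$ of independent Rademacher variables independent of $(d_{j,f})$. Iterating the defining property of conditional symmetry one coordinate at a time yields, for every fixed $\eta\in\{\pm1\}^{n}$, the equality in law $(d_{1,f},\ldots,d_{n,f})\stackrel{d}{=}(\eta_{1}d_{1,f},\ldots,\eta_{n}d_{n,f})$. Averaging over $\eta$ and using the independence of $(\varepsilon_j)$ then gives the joint equidistribution
\[
\bigl(\|f_{n}-f_{0}\|,\,S_{p,n}(f)\bigr)\stackrel{d}{=}\Bigl(\bigl\|\textstyle\sum_{j=1}^{n}\varepsilon_{j}d_{j,f}\bigr\|,\,S_{p,n}(f)\Bigr),
\]
where the second coordinate is unchanged because $\|\varepsilon_{j}d_{j,f}\|=\|d_{j,f}\|$.

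Next I would condition on the $\sigma$-algebra $\mathscr{D}=\sigma(d_{1,f},\ldots,d_{n,f})$. On this conditional probability space the process $g_{k}:=\sum_{j=1}^{k}\varepsilon_{j}d_{j,f}$ is a conditionally symmetric $X$-valued martingale with respect to the Rademacher filtration, whose differences have deterministic norms. Hence $S_{p}^{p}(g)=\sum_{j=1}^{n}\|d_{j,f}\|^{p}=S_{p,n}^{p}(f)(\omega)$ is a known constant on the conditional space, and Corollary \ref{refinement of Azuma inequality} applies conditionally with $b=S_{p,n}^{p}(f)(\omega)$ to yield
\[
\Pb\bigl\{g^{*}\geq r\,S_{p,n}(f)\,\big|\,\mathscr{D}\bigr\}\leq 2\exp\!\Bigl\{-\tfrac{r^{p}}{2K}\Bigr\}.
\]
Taking expectation over $\mathscr{D}$, using $g^{*}\geq\|g_{n}\|=\|\sum_{j=1}^{n}\varepsilon_{j}d_{j,f}\|$, and applying the joint equidistribution from the first step gives
\[
\Pb\Bigl\{\frac{\|f_{n}-f_{0}\|}{S_{p,n}(f)}\geq r\Bigr\}\leq 2\exp\!\Bigl\{-\tfrac{r^{p}}{2K}\Bigr\}\leq 4\exp\!\Bigl\{-\tfrac{r^{p}}{2K}\Bigr\}.
\]

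The main obstacle is the joint equidistribution step, because a single use of conditional symmetry only permits sign-flipping the last coordinate; one must show that iterating this through all $n$ coordinates, combined with the independence of the auxiliary Rademacher variables, preserves the pair $(\|f_{n}-f_{0}\|,S_{p,n}(f))$ in law. The other delicate point is verifying that Corollary \ref{refinement of Azuma inequality} legitimately applies on the conditional space: this is where the randomization pays off, since fixing the $(d_{j,f})$ reduces the problem to a Rademacher-sign martingale with prescribed coefficients in a $p$-uniformly smooth space, exactly the setting where the previously proved Azuma-type bound provides a \emph{deterministic} $\|S_{p}^{p}\|_{\infty}$ equal to $S_{p,n}^{p}(f)(\omega)$, so that the scaling $r\mapsto rS_{p,n}(f)(\omega)$ absorbs all dependence on $\omega$.
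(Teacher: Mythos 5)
Your argument breaks down at the very first step, the joint equidistribution. The claim that conditional symmetry can be iterated to give $(d_{1,f},\ldots,d_{n,f})\stackrel{d}{=}(\eta_{1}d_{1,f},\ldots,\eta_{n}d_{n,f})$ for every fixed $\eta\in\{\pm1\}^{n}$ is false: the definition only controls the conditional law of $d_{j,f}$ given the \emph{past}, so flipping the sign of a middle coordinate may change the conditional law of the future increments. Take $\sigma_{1},\sigma_{2}$ i.i.d.\ Rademacher, $d_{1}=\sigma_{1}$ and $d_{2}=\sigma_{2}(1+\mathds{1}_{\{\sigma_{1}=-1\}})$; this is conditionally symmetric, yet $\Pb\{(d_{1},d_{2})=(-1,1)\}=0$ while $\Pb\{(-d_{1},d_{2})=(-1,1)\}=1/4$. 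More importantly, even the weaker statement you actually need --- that $(\|f_{n}-f_{0}\|,S_{p,n}(f))$ and $(\|\sum_{j}\varepsilon_{j}d_{j,f}\|,S_{p,n}(f))$ are equidistributed --- fails. Let $\sigma_{1},\sigma_{2},\sigma_{3}$ be i.i.d.\ Rademacher and set $d_{1}=\sigma_{1}$, $d_{2}=\sigma_{2}(2+\mathds{1}_{\{\sigma_{1}=-1\}})$, $d_{3}=\sigma_{3}(10+10\cdot\mathds{1}_{\{\sigma_{2}=-1\}})$. This is a real-valued conditionally symmetric martingale, and $S_{2,3}(f)$ is a bijective function of $(|d_{2}|,|d_{3}|)$. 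On the event $\{|d_{2}|=2,\,|d_{3}|=10\}$ (probability $1/4$) one has $f_{3}=3+10\sigma_{3}$, so $|f_{3}|$ is uniform on $\{7,13\}$ conditionally; but $|\varepsilon_{1}d_{1}+\varepsilon_{2}d_{2}+\varepsilon_{3}d_{3}|=|\pm1\pm2\pm10|$ is uniform on $\{7,9,11,13\}$ conditionally. Hence the two pairs have different laws, and the reduction to a Rademacher sum with frozen coefficients is not available. The underlying obstruction is exactly what makes self-normalization delicate: conditioning on (or freezing) the increment norms reveals information about the signs of earlier increments and destroys the martingale structure.

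The paper avoids this entirely. It never conditions on $S_{p,n}(f)$; instead it works with the supermartingale $g_{n}=\cosh(\lambda|f_{n}|)\exp\{-\tfrac{\lambda^{2}}{2}S_{2,n}^{2}(w)\}$ from Theorem \ref{Hilbert valued martingale concentration inequality} and splits the integrand over the event $B=\{|f_{n}|\geq rS_{2,n}(w)\}$ by Cauchy--Schwarz, which produces the self-normalized bound $\Pb(B)\leq 2e^{-r^{2}/4}\Pb(B)^{1/2}$ and hence the constant $4$; the passage to $p$-uniformly smooth spaces is then done by Proposition \ref{Burkholder's replace filtration argument} and Lemma \ref{martingale dimension reduction lemma}, exactly as in your Step concerning the reduction. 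That your route would yield the constant $2$ rather than $4$ is itself a warning sign: the factor $4$ in \eqref{self-normalized Auama type inequality} comes precisely from the Cauchy--Schwarz step needed to decouple $\|f_{n}\|$ from the random normalizer, a step your argument attempts to bypass with an equidistribution that does not hold.
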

\begin{proof}
We begin our proof with a special case, namely Hilbert space-valued martingales such that the norm of martingale differences dominated by non-negative predictable sequence. And then we conclude our proof by applying the ``martingale dimension reduction'' lemma to reduce the case from $p$-uniformly smooth space-valued to the case of Hilbert space-valued. Without loss of generality we assume martingale $(f_{j})_{j=0}^{\infty}$ satisfies $f_{0}=0~a.s.$.

{\bf Step \uppercase\expandafter{\romannumeral1}.} Suppose that $f=(f_{j})_{j=0}^{\infty}$ be a Hilbert space-valued martingale such that there exists a predictable sequence $w=(w_{j})_{j=1}^{\infty}$ with $|d_{j,f}|\leq w_{j}$ for all $j\in\N$ and denote that $S_{2,n}(w)=\big{(}\sum_{j=1}^{n} w_{j}^{2} \big{)}^{1/2}$ for all $n\in \N$.

Let $B=\{\frac{|f_{n}|}{S_{2,n}(w)}\geq r\}$ for $r>0$, then by the fact that $e^{t}\leq 2\cosh(t)\leq 2e^{t}$ for all $t\in \R$, and the Cauchy-Schwarz inequality, we have the following

\begin{small}
\begin{equation}\label{estimate for self-normalized}
\begin{split}
\Pb(B)&\leq 2\inf\limits_{\lambda>0}\int\frac{\cosh\frac{\lambda}{2}|f_{n}|}{\exp\big{\{}\frac{\lambda r}{2}S_{2,n}(w)\big{\}}}\cdot\mathds{1}_{B}~d\Pb\\
      &= 2\inf\limits_{\lambda>0}\int \frac{\cosh\frac{\lambda}{2}|f_{n}|}{\exp\big{\{}\frac{\lambda^{2}}{4}S_{2,n}^{2}(w)\big{\}}}\cdot \exp\Big{\{}\frac{\lambda^{2}}{4} S_{2,n}^{2}(w)-\frac{\lambda r}{2}S_{2,n}(w)\Big{\}}\cdot\mathds{1}_{B}~d\Pb\\
      &\leq 2\inf\limits_{\lambda>0}\!\Big{(}\!\int\frac{\cosh \lambda |f_{n}|}{\exp\big{\{}\frac{\lambda^{2}}{2}S_{2,n}^{2}(w)\! \big{\}}}~d\Pb \Big{)}^{1/2}\!\cdot\!\Big{(}\!\int\exp\big{\{}\!\frac{\lambda^{2}}{2}S_{2,n}^{2}(w)\!-\!\lambda rS_{2,n}(w)\!\big{\}}\!\cdot\! \mathds{1}_{B}~ d\Pb\Big{)}^{1/2}.
\end{split}
\end{equation}
\end{small}

By the proof of Theorem \ref{Hilbert valued martingale concentration inequality}, we know that $\left\{\frac{\cosh \lambda |f_{n}|}{\exp\big{\{}\frac{\lambda^{2}}{2} S_{2,n}^{2}(w)\big{\}}} \right\}_{n=0}^{\infty}$ forms a super-martingale. Then, \eqref{estimate for self-normalized} becomes into
\begin{equation}\nonumber
\begin{split}
\Pb(B)&\leq 2\inf\limits_{\lambda>0}\Big{(}\int \exp\big{\{}\frac{\lambda^{2}}{2}S_{2,n}^{2}(w)-\lambda rS_{2,n}(w)\big{\}}\cdot \mathds{1}_{B}~d\Pb \Big{)}^{1/2}\\
      &=2\int \big{(}\exp\big{\{}-\frac{r^{2}}{2}\big{\}}\cdot \mathds{1}_{B}~d\Pb\big{)}^{1/2}\\
      &=2\exp\big{\{}-\frac{r^{2}}{4} \big{\}}\cdot\Pb(B)^{1/2}.
\end{split}
\end{equation}
Therefore,
\begin{equation}\label{self-normalized sums}
\Pb\left\{\frac{|f_{n}|}{S_{2,n}(w)}\geq r \right\}=\Pb(B)\leq 4\exp\big{\{}-\frac{r^{2}}{2} \big{\}},
\end{equation}
for all $n\in \N$.

{\bf Step \uppercase\expandafter{\romannumeral2}.} Suppose that $f=(f_{j})_{j=0}^{\infty}$ is a conditionally symmetric martingale with values in $p$-uniformly smooth space, then there exists another filtration $(\mathscr{G}_{j})_{j=0}^{\infty}$ such that $f=(f_{j})_{j=0}^{\infty}$ is a martingale relatively to the filtration $\mathscr{G}$ and $\|d_{j,f}\|$ is $\mathscr{G}_{j-1}$-measurable for each $j\in\N$. By Lemma \ref{martingale dimension reduction lemma}, there exists an $\R^{2}$-valued martingale $\{N_{j}\}_{j=0}^{\infty}$ such that for all $j\in \N$
\begin{enumerate}[i)]
\item $\|f_{j}\|^{p}\leq |N_{j}|^{2}$,
\item $|N_{j}-N_{j-1}|^{2}\leq K \|f_{j}-f_{j-1}\|^{p}$,
\end{enumerate}
where $K=O(s_{p}(X))$.

Then, applying \eqref{self-normalized sums} we have the following
\begin{equation}\nonumber
\begin{split}
\Pb\left\{\frac{\|f_{n}\|}{S_{p,n}(f)}\geq r \right\}&=\Pb\left\{\frac{\|f_{n}\|^{p}}{S_{p,n}^{p}(f)}\geq r^{p} \right\}\\
                                                      &\leq\Pb\left\{\frac{|N_{n}|^{2}}{S_{2,n}^{2}(N_{n})}\geq\frac{r^{p}}{K}\right\}\\
                                                      &\leq 4 \exp\big{\{}-\frac{r^{p}}{2K} \big{\}},
\end{split}
\end{equation}
for all $n\in \N$, which completes our proof.
\end{proof}

\begin{rem}
The constant ``$4$'' of \eqref{self-normalized Auama type inequality} is not the best possible and it will be ``$2$'' for real-valued conditionally symmetric martingales. However, if we ignore the universal constant here, the inequality \eqref{self-normalized Auama type inequality} indeed provides the right degree of concentration.
\end{rem}

\section{Some further inequalities for Banach space-valued martingales}

In this section, methods developed in the previous section will be used to derive some further inequalities for Banach space-valued martingales, such as moment inequalities for double dyadic martingales and De la Pe\~{n}a-type inequality for conditionally symmetric martingales.


\subsection{Moment inequalities for Banach space-valued double dyadic martingales}

Moment inequalities for double dyadic martingales are important to Harmonic analysis. Pipher \cite{Pip} provided a moment inequality for double dyadic martingales (see the definition below) and then applied it to the study the exponential square integrability of $|f-f_{Q}|$ over $Q$ in the bidisc case of two parameter kernel. And Ba\~{n}uelos \cite{Ba} extended Phipher's results to continuous-time martingales on Brownian filtration and then applied them to the study of Riesz transforms.


In the rest of this subsection we will apply the methods developed in Section \ref{main section} to deduce moment inequalities for Banach space-valued double dyadic martingales. Recall that a random process $f=(f_{n})_{n=0}^{\infty}$ is said to be \emph{dyadic martingale} (or, \emph{Paley-Walsh martingale}) if $f=(f_{n})_{n=0}^{\infty}$ is martingale relatively to the filtration $(\mathscr{F}_{n})_{n=0}^{\infty}$, where $\mathscr{F}_{n}$ are $\sigma$-sub-algebra generated by the dyadic intervals of length $2^{-n}$ in $[0,1]$. A double dyadic martingale is a double indexed random process $(f^{j}_{n})_{j,n}$ such that $(f^{j}_{n})_{j,n}$ is a dyadic martingale in $j$ for each fixed $n$ and also a dyadic martingale in $n$ for each fixed $j$. The following inequalities can be viewed as a variant of \cite[Theorem 1.1]{Ba} for Banach space-valued double dyadic martingales.

\begin{thm} \label{double dyadic martingale inequalities}
Suppose that $f^{j}=(f^{j}_{n})_{n=0}^{\infty}$ are dyadic martingales with values in $p$-uniformly smooth (resp. $q$-uniformly convex) Banach space for $1<p\leq 2$ (resp. $2\leq q<\infty$), $j=1,2,\cdots,m$. Then, there exists constant $K$ depending only on $X$ such that following inequalities hold
\begin{equation}\label{doble martingale type}
\left\|\big{(}\sum\limits_{j=1}^{m}\|f^{j}_{n}\|^{p}\big{)}^{1/p}\right\|_{r}\leq K r^{1/p}\left\|\big{(}\sum\limits_{j=1}^{m}S_{p,n}^{p}(f^{j})\big{)}^{1/p} \right\|_{r},
\end{equation}
\begin{equation}\label{double martingale cotype}
\left(\mbox{resp.}~ \left\|\big{(}\sum\limits_{j=1}^{m}S_{q,n}^{q}(f^{j})\big{)}^{1/q} \right\|_{r}\leq K r^{1/q} \left\|\big{(}\sum\limits_{j=1}^{m}\|f^{j}_{n}\|^{q}\big{)}^{1/q} \right\|_{r},\right)
\end{equation}
for all $2\leq r<\infty$ and $n\in\N$.
\end{thm}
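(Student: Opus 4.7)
The plan is to reduce \eqref{doble martingale type} to a single one-parameter moment inequality by bundling the $m$ dyadic martingales into one martingale valued in an $\ell^p$-sum of copies of $X$. Concretely, set $\mathbf{F}_n = (f^1_n,\ldots,f^m_n)$, regarded as a random variable in $\ell^p_m(X)$ (i.e.\ $X^m$ with norm $\|\mathbf{x}\|_{\ell^p_m(X)} = \bigl(\sum_{j=1}^m \|x_j\|^p\bigr)^{1/p}$). Since each $f^j$ is a dyadic martingale with respect to the common dyadic filtration in the $n$-variable, $\mathbf{F}=(\mathbf{F}_n)_{n=0}^{\infty}$ is a dyadic (hence conditionally symmetric) $\ell^p_m(X)$-valued martingale, and its norm and $p$-variation recover exactly the two sides of \eqref{doble martingale type}:
\[
\|\mathbf{F}_n\|_{\ell^p_m(X)} = \Bigl(\textstyle\sum_{j=1}^m\|f^j_n\|^p\Bigr)^{1/p}, \qquad S_{p,n}(\mathbf{F}) = \Bigl(\textstyle\sum_{j=1}^m S_{p,n}^p(f^j)\Bigr)^{1/p}.
\]

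The first step is to verify that $\ell^p_m(X)$ is itself $p$-uniformly smooth with $s_p(\ell^p_m(X)) = O(s_p(X))$, uniformly in $m$. Using Theorem \ref{equivalent smooth and dual mapping} coordinate-by-coordinate, we sum the defining inequality \eqref{constant} over $j$; the mixed term $\sum_j \langle J_p(x_j), y_j\rangle$ defines a functional on $\ell^p_m(X)$ via the isometric identification $(\ell^p_m(X))^* = \ell^q_m(X^*)$ with $1/p+1/q=1$, and one checks that $(J_p(x_j))_j$ realizes $J_p(\mathbf{x})$ on the sum space. This gives \eqref{constant} on $\ell^p_m(X)$ with the same constant $c$.

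The second step is the Burkholder-type moment inequality
\[
\|\mathbf{F}_n^*\|_r \;\le\; K'\, r^{1/p}\, \|S_{p,n}(\mathbf{F})\|_r, \qquad r \ge 2,
\]
valid for any conditionally symmetric martingale in a $p$-uniformly smooth space. This is essentially the content produced inside the proof of Theorem \ref{p-uniform smoothness is necessary}: apply the good-$\lambda$ inequality of Lemma \ref{good lambda inequality} (available because $\mathbf{F}$ is dyadic, hence conditionally symmetric, and $\ell^p_m(X)$ is $p$-uniformly smooth by Step~1) with $\beta=2$ and the $r$-dependent $\delta$ used there, then invoke \cite[Lemma 7.1]{Bu1}; the resulting constant has the form $K' = O(s_p(X)^{1/p})$. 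Stopping $\mathbf{F}$ at time $n$ converts $\mathbf{F}^*$ and $S_p(\mathbf{F})$ into their truncated versions. Since $\mathscr{F}_0$ is trivial and the $f^j$ are dyadic martingales we may assume $\mathbf{F}_0 = 0$, so $\|\mathbf{F}_n\| \le \mathbf{F}_n^*$, and \eqref{doble martingale type} drops out.

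The inequality \eqref{double martingale cotype} is obtained by the dual of the same reduction: form $\mathbf{F}_n \in \ell^q_m(X)$, which inherits $q$-uniform convexity from $X$ coordinate-wise with the same constant, and apply the dual Burkholder-type inequality
\[
\|S_q(\mathbf{F})\|_r \;\le\; K''\, r^{1/q}\, \|\mathbf{F}^*\|_r,\qquad r\ge 2,
\]
for conditionally symmetric martingales in a $q$-uniformly convex space. This is derived by a good-$\lambda$ argument parallel to Lemma \ref{good lambda inequality}, but with the roles of $f^*$ and $S_q(f)$ exchanged, and with the reverse-sign $q$-uniform convexity inequality $\|x+y\|^q \ge \|x\|^q + q\langle J_q(x), y\rangle + c^{-1}\|y\|^q$ taking the place of \eqref{constant}; one stops as soon as $S_q$ becomes large and controls the stopped piece by a martingale with bounded maximal function.

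\textbf{Main obstacle.} The delicate piece is the convex side. The paper develops the good-$\lambda$/Burkholder machinery only on the smooth side, so establishing \eqref{double martingale cotype} requires setting up the symmetric convex analog of Lemma \ref{good lambda inequality} from scratch, driven by the $q$-uniform convexity inequality rather than \eqref{constant}. In addition, one must verify by direct computation that the $q$-uniform convexity constant of $\ell^q_m(X)$ is independent of $m$. Both tasks are standard in spirit but are the only places where genuine verification, rather than citation of earlier results in the excerpt, is required.
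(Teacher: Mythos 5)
Your treatment of \eqref{doble martingale type} is correct and takes a mildly different, arguably cleaner, route than the paper. You bundle the $m$ martingales into a single $\ell^p_m(X)$-valued dyadic martingale $\mathbf{F}$, check that $\ell^p_m(X)$ is $p$-uniformly smooth with constant $O(s_p(X))$ uniformly in $m$ (your verification via $(J_p(x_j))_j\in J_p(\mathbf{x})$ under the identification $(\ell^p_m(X))^*=\ell^{q}_m(X^*)$ is sound), and then run the good-$\lambda$ inequality of Lemma \ref{good lambda inequality} plus Burkholder's Lemma 7.1 exactly as in Theorem \ref{p-uniform smoothness is necessary}. The paper instead applies the dimension-reduction Lemma \ref{martingale dimension reduction lemma} to each $f^j$ separately and then proves a multi-martingale Hilbert-space concentration inequality (Lemma \ref{Hilbert space-valued double dyadic martingale inequalities}) for $\bigl(\sum_j|N^j_n|^2\bigr)^{1/2}$ before running the same good-$\lambda$ argument; this is morally your bundling performed after, rather than before, the reduction to $\R^2$. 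Either order works, and your version avoids restating the Hilbert-space lemma for $m$-tuples.

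For \eqref{double martingale cotype}, however, there is a genuine gap. You propose to build, from scratch, a convex-side good-$\lambda$ inequality with the roles of $f^*$ and $S_q(f)$ exchanged, "stopping as soon as $S_q$ becomes large and controlling the stopped piece by a martingale with bounded maximal function." The missing ingredient is precisely the estimate that makes this work: an exponential (indeed $\exp(-cr^q)$-type) tail bound for $S_q(h)$ of a martingale $h$ with $\|h^*\|_\infty$ bounded, valued in a $q$-uniformly convex space. Nothing in the paper supplies this --- the dimension-reduction lemma and the Azuma-type Theorem \ref{predictable sequence dominated case} only go in the smooth direction (domination of $\|f_n\|$ by the square function, not conversely) --- and you do not construct it. Such a bound is true and classical for real dyadic martingales, but establishing it in the $q$-uniformly convex vector-valued setting is a substantive piece of work, not a routine sign flip in \eqref{constant}. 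The paper avoids the issue entirely by duality: since $X^*$ is $q'$-uniformly smooth, it chooses near-optimal functionals $\varphi_{l,j}\in L^{r'}(X^*)$ with $\bigl\|\bigl(\sum_{j,l}\|\varphi_{l,j}\|^{q'}\bigr)^{1/q'}\bigr\|_{r'}\le 1$, writes $\bigl\|\bigl(\sum_j S_{q,n}^q(f^j)\bigr)^{1/q}\bigr\|_r$ as the pairing $\int\sum_j\langle f^j_n,g^j_n\rangle\,d\Pb$ with $g^j_n=\sum_l(\E_l-\E_{l-1})\varphi_{l,j}$, and bounds $\bigl\|\bigl(\sum_j\|g^j_n\|^{q'}\bigr)^{1/q'}\bigr\|_{r'}$ using the already-proved smooth-side inequality for the $X^*$-valued martingales $g^j$ together with an estimate from Pisier's book. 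If you want to complete your argument, the cleanest fix is to adopt this duality step rather than to develop the convex good-$\lambda$ machinery.
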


To prove the theorem we will first deal with Hilbert space-valued dyadic martingales and then apply the ``martingale dimension reduction'' argument to reduce martingales from $p$-uniformly smooth space-valued to $\R^{2}$-valued. The following lemma is a variant of Theorem \ref{Hilbert valued martingale concentration inequality}.
\begin{lem}\label{Hilbert space-valued double dyadic martingale inequalities}
Suppose that $f^{j}=(f^{j}_{n})_{n=0}^{\infty}$ are dyadic martingales with values in Hilbert space $\HS$, for $j=1,2,\cdots,m$. Then, for every $r\geq 0$
\begin{equation}\nonumber
\Pb\left\{\Big{(}\sum\limits_{j=1}^{m}|f^{j}_{n}|^{2}\Big{)}^{1/2}\geq r \right\}\leq 2\exp\left\{-\frac{r^{2}}{2\Big{\|}\sum_{j=1}^{m}S_{2,n}^{2}(f^{j})\Big{\|}_{\infty}} \right\},
\end{equation}
for all $n\in \N$.
\end{lem}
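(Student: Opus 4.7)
The strategy is to bundle the $m$ individual dyadic martingales into a single martingale $F_{n}=(f^{1}_{n},\dots,f^{m}_{n})$ taking values in the direct sum $\HS^{m}$, equipped with the natural inner product $\langle(x_{j}),(y_{j})\rangle=\sum_{j}\langle x_{j},y_{j}\rangle$. Since $\HS^{m}$ is itself a Hilbert space and each $f^{j}$ is a martingale with respect to the common dyadic filtration $(\mathscr{F}_{n})_{n=0}^{\infty}$, so is $F$. The two identities that drive the reduction are
\[
|F_{n}|^{2}=\sum_{j=1}^{m}|f^{j}_{n}|^{2},\qquad |d_{k,F}|^{2}=\sum_{j=1}^{m}|d_{k,f^{j}}|^{2},
\]
which turn the problem into controlling the maximal function of a single Hilbert-valued martingale.

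The key observation needed to invoke Theorem \ref{Hilbert valued martingale concentration inequality} is that $|d_{k,F}|$ is predictable. For a dyadic martingale, on each atom $A$ of $\mathscr{F}_{k-1}$ the value $f^{j}_{k-1}|_{A}$ is a constant $c\in\HS$, while $f^{j}_{k}|_{A}$ takes exactly the two values $c\pm v$ on the two halves of $A$ for some $v\in\HS$ (fixed by the martingale property $\E_{k-1}f^{j}_{k}=f^{j}_{k-1}$). Thus $|d_{k,f^{j}}|=|v|$ is constant on $A$, hence $\mathscr{F}_{k-1}$-measurable. Combining coordinates, the sequence $w_{k}:=|d_{k,F}|$ is a non-negative predictable sequence satisfying (in fact with equality) $|d_{k,F}|\leq w_{k}$, and
\[
\sum_{k=1}^{n}w_{k}^{2}=\sum_{j=1}^{m}S_{2,n}^{2}(f^{j}).
\]

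With predictability in hand I would truncate at level $n$: set $\widetilde F_{k}=F_{k\wedge n}$ and $\widetilde w_{k}=w_{k}\mathds{1}_{\{k\leq n\}}$, so that $|d_{k,\widetilde F}|\leq\widetilde w_{k}$ for every $k\in\N$ while $\sum_{k=1}^{\infty}\widetilde w_{k}^{2}=\sum_{j=1}^{m}S_{2,n}^{2}(f^{j})$. Assuming (without loss of generality) $f^{j}_{0}=0$ so that $F_{0}=0$, the inclusion
\[
\Big\{\big(\textstyle\sum_{j=1}^{m}|f^{j}_{n}|^{2}\big)^{1/2}\geq r\Big\}\subseteq\{\widetilde F^{*}\geq r\}
\]
is immediate, and Theorem \ref{Hilbert valued martingale concentration inequality} applied to $\widetilde F$ with the bound $b=\big\|\sum_{j=1}^{m}S_{2,n}^{2}(f^{j})\big\|_{\infty}$ yields precisely the stated estimate. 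I do not anticipate any real obstacle: the whole argument is a transparent reduction, and the only non-formal ingredient is the standard fact that $|d_{k,f^{j}}|$ is predictable for a dyadic martingale, which is what allows us to activate the hypothesis of the Pinelis-type inequality already proved in Theorem \ref{Hilbert valued martingale concentration inequality}.
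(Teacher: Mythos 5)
Your proof is correct and is essentially the paper's argument: the paper's function $u(t)=\bigl(\sum_{j=1}^{m}|f^{j}_{n-1}+td_{n,f^{j}}|^{2}\bigr)^{1/2}$ is exactly the norm of your bundled $\HS^{m}$-valued martingale, and both proofs hinge on the same two facts, namely that $\ell_{2}^{m}(\HS)$ is again a Hilbert space and that dyadicity makes $|d_{k,f^{j}}|$ predictable. The only cosmetic difference is that you invoke Theorem \ref{Hilbert valued martingale concentration inequality} as a black box (after truncating at $n$), whereas the paper reruns its supermartingale construction for $u(t)$; both routes yield the same bound.
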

\begin{proof}
Let $u(t)=\big{(}\sum_{j=1}^{m}|f^{j}_{n-1}+td_{n,f^{j}}|^{2}\big{)}^{1/2}$ and $\varphi(t)=\E_{n}\big{(}\cosh\lambda u(t)\big{)}$. Note that $f^{j}=(f^{j}_{n})_{n=0}^{\infty}$ are dyadic martingales for all $j=1,\cdots,m$, and hence $\left|d_{n,f^{j}}\right|$ is $\mathscr{F}_{n-1}$-measurable for each $n\in \N$ and $j=1,2\cdots,m$.

By the proof of Theorem \ref{Hilbert valued martingale concentration inequality}, we have
$\varphi(1)\leq \exp\left\{\frac{\lambda^{2}\sum_{j=1}^{m}\left|d_{n,f^{j}}\right|^{2}}{2} \right\}\varphi(0)$,
then it follows that $(g_{n})_{n=0}^{\infty}$ forms a super-martingale, where
\begin{equation}\nonumber
g_{n}=\frac{\cosh\lambda \big{(}\sum\limits_{j=1}^{m}|f^{j}_{n}|^{2}\big{)}^{1/2}}{\prod\limits_{j=1}^{m}\exp\Big{\{}\frac{\lambda^{2}S_{2,n}^{2}(f^{j})}{2}\Big{\}}},~~n\in\N_{0},~\lambda>0.
\end{equation}
Consequently, the result follows from the Markov inequality and the fact that $\int g_{n} d\Pb\leq\int g_{0} d\Pb=1$.
\end{proof}

\begin{proof}[Proof of Theorem \ref{double dyadic martingale inequalities}]
{\bf Step \uppercase\expandafter{\romannumeral1}.} Suppose that $f^{j}=(f^{j}_{n})_{n=0}^{\infty}$, $j=1,\cdots,m$ are dyadic martingales  with values in $p$-uniformly smooth space, then, by the ``martingale dimension reduction'' argument (Lemma \ref{martingale dimension reduction lemma}), it follows that there exists $\R^{2}$-valued martingales $N^{j}=(N^{j}_{n})_{n=0}^{\infty}$ such that
\begin{equation}\label{domain of martingales}
\|f^{j}_{n}\|^{p}\leq |N^{j}_{n}|^{2},
\end{equation}
and
\begin{equation}\label{domain of martingale differences}
|N^{j}_{n}-N^{j}_{n-1}|^{2}\leq K_{1}\|d_{n,f^{j}}\|^{p},
\end{equation}
$j=1,\cdots,m$ with $K_{1}=O(s_{p}(X))$.

Hence, analogous to the proof of Theorem \ref{p-uniform smoothness is necessary}, we can deduce the following inequality by using Lemma \ref{good lambda inequality} and Lemma \ref{Hilbert space-valued double dyadic martingale inequalities},
\begin{equation}\nonumber
\left\|\Big{(}\sum\limits_{j=1}^{m}\|f^{j}_{n}\|^{p}\Big{)}^{1/2} \right\|_{\gamma}^{\gamma}\leq (\gamma K_{1})^{\gamma/2}\left\|\Big{(}\sum\limits_{j=1}^{m}S_{p,n}^{p}(f^{j})\Big{)}^{1/2} \right\|_{\gamma}^{\gamma},
\end{equation}
for all $1\leq \gamma<\infty$. Therefore, the following holds
\begin{equation}\label{change variable}
\int \Big{(}\sum\limits_{j=1}^{m}\|f^{j}_{n}\|^{p} \Big{)}^{\delta} d\Pb\leq (2\delta K_{1})^{\delta} \int \Big{(}\sum\limits_{j=1}^{m} S_{p,n}^{p}(f^{j})\Big{)}^{\delta} d\Pb,
\end{equation}
for $\frac{1}{2}\leq \delta=\frac{\gamma}{2}<\infty$. Note that $1<p\leq 2$, hence, by \eqref{change variable}, the following inequality holds true for all $1\leq r<\infty$,
\begin{equation}\nonumber
\int\Big{(}\sum\limits_{j=1}^{m}\|f^{j}_{n}\|^{p} \Big{)}^{r/p} d\Pb\leq \left(\frac{2rK_{1}}{p}\right)^{r/p}\int \Big{(}\sum\limits_{j=1}^{m}S_{p,n}^{p}(f^{j})\Big{)}^{r/p} d\Pb.
\end{equation}
Consequently
\begin{equation}\nonumber
\left\|\Big{(}\sum\limits_{j=1}^{m}\|f^{j}_{n}\|^{p}\Big{)}^{1/p} \right\|_{r}\leq (2K_{1})^{1/p} r^{1/p} \left\|\Big{(}\sum\limits_{j=1}^{m}S_{p,n}^{p}(f^{j})\Big{)}^{1/p} \right\|_{r},
\end{equation}
for all $1\leq r<\infty$.

{\bf Step \uppercase\expandafter{\romannumeral2}.} We prove the inequality \eqref{double martingale cotype} via duality argument. For $2\leq q,~r<\infty$, denote that $r^{\prime}=\frac{r}{r-1}$ and $q^{\prime}=\frac{q}{q-1}$, then, for any $0<\varepsilon<1$, there exists a sequence $(\varphi_{l,j})_{j=1,l=1}^{m,n}\in L^{r^{\prime}}(X^*)$ such that
\begin{equation}\label{existence of sequence}
\left\|\Big{(}\sum\limits_{j=1}^{m}\sum\limits_{l=1}^{n} \|\varphi_{l,j}\|^{q^{\prime}}\Big{)}^{1/q^{\prime}} \right\|_{r^{\prime}}\leq 1,
\end{equation}
and
\begin{equation}\label{estimate by duality}
\begin{split}
(1-\varepsilon)\left\|\Big{(}\sum\limits_{j=1}^{m}S_{q,n}^{q}(f^{j})\Big{)}^{1/q} \right\|_{r}
&\leq \int\sum\limits_{j=1}^{m}\sum\limits_{l=1}^{n}\langle d_{l,f^{j}},\varphi_{l,j}\rangle d\Pb\\
&=\int\sum\limits_{j=1}^{m}\sum\limits_{l=1}^{n}\langle d_{l,f^{j}},(\E_{l}-\E_{l-1})\varphi_{l,j}\rangle d\Pb\\
&=\int \sum\limits_{j=1}^{m}\langle f^{j}_{n},g^{j}_{n}\rangle d\Pb,
\end{split}
\end{equation}
where $g^{j}_{n}=\sum_{l=1}^{n}(\E_{l}-\E_{l-1})\varphi_{l,j}$ for all $j=1,\cdots,m$.
We now estimate the term $\int \sum_{j=1}^{m}\langle f^{j}_{n},g^{j}_{n}\rangle d\Pb$ by a duality argument. Indeed,
if $X$ is of $q$-uniformly convex then $X^*$ must be $q^{\prime}$-uniformly smooth and hence, by the first part of our proof and the H\"older inequality, we have
\begin{equation}\label{Pisier's interpolation argument}
\begin{split}
\int\sum\limits_{j=1}^{m}\langle f^{j}_{n},g^{j}_{n}\rangle d\Pb&\leq\int\sum\limits_{j=1}^{m}\|f^{j}_{n}\|\cdot\|g^{j}_{n}\| d\Pb\\
                                                                &\leq\int\Big{(}\sum\limits_{j=1}^{m}\|f^{j}_{n}\|^{q}\Big{)}^{1/q}\cdot\Big{(}\sum\limits_{j=1}^{m}\|g^{j}_{n}\|^{q^{\prime}}\Big{)}^{1/q^{\prime}}d\Pb\\
                                                                &\leq\left\|\Big{(}\sum\limits_{j=1}^{m}\|f^{j}_{n}\|^{q}\Big{)}^{1/q}\right\|_{r}\cdot\left\|\Big{(}\sum\limits_{j=1}^{m}\|g^{j}_{n}\|^{q^{\prime}}\Big{)}^{1/q^{\prime}}\right\|_{r^{\prime}}\\
                                                                &\leq 2r^{\prime}(r^{\prime}\!-\!1)^{-\!1/q}\left\|\Big{(}\sum\limits_{j=1}^{m}\|f^{j}_{n}\|^{q}\Big{)}^{1/q}\right\|_{r}\!\!\cdot\left\|\!\Big{(}\!\sum\limits_{j=1}^{m}\sum\limits_{l=1}^{n} \|\varphi_{l,j}\|^{q^{\prime}}\!\Big{)}^{1/q^{\prime}}\! \right\|_{r^{\prime}}\!\!,
\end{split}
\end{equation}
where the last inequality follows from \cite[p. 418]{Pis2} and the triangle inequality.

Combining with \eqref{existence of sequence}, \eqref{estimate by duality}, \eqref{Pisier's interpolation argument} and note that $0<\varepsilon<1$ is arbitrary, then there exists a constant $K_{2}$ only depending on $X$ such that
\begin{equation}\nonumber
\left\|\Big{(}\sum\limits_{j=1}^{m}S_{q,n}^{q}(f^{j})\Big{)}^{1/q} \right\|_{r}\leq K_{2} r^{\prime(1+1/q^{\prime})}(r^{\prime}-1)^{-1/q}\left\|\Big{(}\sum\limits_{j=1}^{m}\|f^{j}_{n}\|^{q}\Big{)}^{1/q}\right\|_{r}.
\end{equation}
Note here that $2\leq r<\infty$, then it entails $r^{\prime(1+1/q^{\prime})}(r^{\prime}-1)^{-1/q}\leq 4 r^{1/q}$. Therefore, letting $K=4K_{2}$ we have
\begin{equation}\nonumber
\left\|\Big{(}\sum\limits_{j=1}^{m}\|f^{j}_{n}\|^{q}\Big{)}^{1/q}\right\|_{r}\leq K r^{1/q} \left\|\Big{(}\sum\limits_{j=1}^{m}S_{q,n}^{q}(f^{j})\Big{)}^{1/q}\right\|_{r},
\end{equation}
for all $2\leq r<\infty$. This completes our proof.
\end{proof}

\begin{cor}
A Banach space $X$ is linearly isomorphic to a Hilbert space if and only if the following two inequalities hold for $X$-valued dyadic martingales $f^{j}=(f^{j}_{n})_{n=0}^{\infty}$, $j=1,\cdots,m$, there exists a constant $K$ only depending on $X$ such that
\begin{equation}\label{double martingale type 2}
\left\|\Big{(}\sum\limits_{j=1}^{m}\|f^{j}_{n}\|^{2}\Big{)}^{1/2} \right\|_{r}\leq K r^{1/2}\left\|\Big{(}\sum\limits_{j=1}^{m}S_{2,n}^{2}(f^{j})\Big{)}^{1/2} \right\|_{r},
\end{equation}
and
\begin{equation}\label{double martingale cotype 2}
\left\|\Big{(}\sum\limits_{j=1}^{m}S_{2,n}^{2}(f^{j})\Big{)}^{1/2}\right\|_{r}\leq K r^{1/2}\left\|\Big{(}\sum\limits_{j=1}^{m}\|f^{j}_{n}\|^{2}\Big{)} \right\|_{r},
\end{equation}
for all $2\leq r<\infty$.
\end{cor}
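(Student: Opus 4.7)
The plan is to prove both directions by specializing the two-parameter inequalities to the single-martingale case ($m = 1$) and then invoking classical renorming theory. The ``only if'' direction is essentially a direct corollary of Theorem \ref{double dyadic martingale inequalities}, while the ``if'' direction reduces to assembling three classical ingredients: Pisier's renorming theorem for $2$-uniform smoothness (Theorem \ref{Pisier's renorming theorem}), its dual counterpart for $2$-uniform convexity, and Kwapie\'n's theorem identifying spaces that are simultaneously $2$-uniformly smooth and $2$-uniformly convex with Hilbert spaces.

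First I would handle the ``only if'' direction. Assume $X$ is linearly isomorphic to a Hilbert space. Then, after renorming, $X$ is both $2$-uniformly smooth and $2$-uniformly convex (the parallelogram law giving both $s_2(X) < \infty$ and the $2$-uniform convexity constant finite). Applying Theorem \ref{double dyadic martingale inequalities} first with $p = 2$ and then with $q = 2$ yields \eqref{double martingale type 2} and \eqref{double martingale cotype 2}, respectively, with a constant $K$ that depends only on the Banach--Mazur distance of $X$ to $\HS$.

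For the ``if'' direction I would specialize each of \eqref{double martingale type 2} and \eqref{double martingale cotype 2} to $m = 1$. The first inequality then becomes $\|f_n\|_r \leq K r^{1/2}\|S_{2,n}(f)\|_r$ for every $X$-valued dyadic martingale $f$ and every $2 \leq r < \infty$. Taking $r = 2$ in particular gives $\E\|f_n\|^2 \leq 2K^2 \sum_{j=1}^{n} \E\|d_{j,f}\|^2$, which is condition iii) of Pisier's renorming theorem (Theorem \ref{Pisier's renorming theorem}) for $p = 2$; hence $X$ is linearly isomorphic to a $2$-uniformly smooth space. Symmetrically, \eqref{double martingale cotype 2} with $m = 1$ and $r = 2$ yields $\E S_{2,n}^{2}(f) \leq 2K^2 \E\|f_n\|^2$, and the dual form of Pisier's renorming theorem (obtained by passing to $X^*$, which is $2$-uniformly smooth iff $X$ is $2$-uniformly convex, see \cite{Pis1}) identifies this with $X$ being linearly isomorphic to a $2$-uniformly convex space. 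Finally, I would invoke Kwapie\'n's theorem: any Banach space that is simultaneously linearly isomorphic to a $2$-uniformly smooth and to a $2$-uniformly convex space is linearly isomorphic to a Hilbert space.

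The main obstacle is conceptual rather than computational: one must verify that the $m = 1$ specialization of \eqref{double martingale cotype 2} really delivers $2$-uniform convexity (up to isomorphism), since the excerpt only states Pisier's theorem in the $p$-uniformly smooth direction. This is standard via duality, but it is the one nontrivial input in the argument; the other steps --- the $p = q = 2$ specialization of Theorem \ref{double dyadic martingale inequalities} and the appeal to Kwapie\'n's theorem --- are straightforward once the $2$-uniform smoothness and $2$-uniform convexity of $X$ have been established.
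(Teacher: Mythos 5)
Your proposal is correct and follows essentially the same route as the paper: the ``only if'' direction is the $p=q=2$ specialization of Theorem \ref{double dyadic martingale inequalities}, and the ``if'' direction specializes to $m=1$ to extract martingale type $2$ and martingale cotype $2$ (equivalently, via Pisier's renorming theorems, isomorphism to a $2$-uniformly smooth and a $2$-uniformly convex space) and then concludes by Kwapie\'n's theorem. The paper states this more tersely but the underlying argument is identical; your explicit $r=2$, $m=1$ reduction and the duality remark for the cotype half are exactly the details the paper leaves implicit.
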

\begin{proof}
By the fact that Hilbert space is $2$-uniformly smooth and $2$-uniformly convex, then \eqref{double martingale type 2} and
\eqref{double martingale cotype 2} follow from Theorem \ref{double dyadic martingale inequalities} directly. Hence, the ``only if'' part is proven.

For the ``if'' part, we assume that inequalities \eqref{double martingale type 2} and \eqref{double martingale cotype 2} hold true for $X$-valued dyadic martingales. Then, $X$ must have martingale type $2$ and martingale cotype $2$. By a theorem of Kwapie\'{n}, it follows that $X$ is linearly isomorphic to a Hilbert space.
\end{proof}

\subsection{De la Pe\~{n}a-type inequalities for $p$-uniformly smooth space-valued martingales}
We conclude this paper by extending the De la Pe\~{n}a-type concentration inequalities for martingales with values in $p$-uniformly smooth space.

Recall that the following martingale inequality is known as the Freedman inequality, which was proved by Freedman \cite{Fr} in 1975.
\begin{thm}[Freedman inequality]\label{Freedman inequality}
For every real-valued martingale $f=(f_{j})_{j=0}^{\infty}$ with $|d_{j,f}|\leq 1~a.s.$ for all $j\in\N$ and any positive numbers $a,~b$. The following inequality holds
\begin{equation}\nonumber
\Pb\{f_{n}-f_{0}\geq r,~s_{2,n}^{2}(f)\leq b~\mbox{for some }n\}\leq\Big{(}\frac{b}{r+b}\Big{)}^{r+b}e^{r}\leq\exp\Big{\{}-\frac{r^{2}}{2(r+b)}\Big{\}},
\end{equation}
where $s_{2,n}^{2}(f)=\sum_{j=1}^{n}\E_{j-1}|d_{j,f}|^{2}$.
\end{thm}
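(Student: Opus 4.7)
The plan is to construct an exponential supermartingale whose compensator is the conditional quadratic variation $s_{2,n}^{2}(f)$, then stop at the first time the event in the theorem occurs and apply Theorem \ref{optional sampling theorem}. For each fixed $\lambda>0$, set $\psi(\lambda)=e^{\lambda}-1-\lambda$ and define
\[
M_{n}=\exp\bigl(\lambda(f_{n}-f_{0})-\psi(\lambda)\,s_{2,n}^{2}(f)\bigr).
\]
The key analytic input is the one-sided estimate $e^{\lambda x}\leq 1+\lambda x+\psi(\lambda)x^{2}$, valid for all $x\leq 1$ and $\lambda>0$. Taking $\E_{j-1}$ of this inequality applied to $x=d_{j,f}$, and using $|d_{j,f}|\leq 1$ together with $\E_{j-1}(d_{j,f})=0$, one obtains $\E_{j-1}(e^{\lambda d_{j,f}})\leq \exp(\psi(\lambda)\E_{j-1}(d_{j,f}^{2}))$. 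Since the increment $s_{2,n}^{2}(f)-s_{2,n-1}^{2}(f)=\E_{n-1}(d_{n,f}^{2})$ is $\mathscr{F}_{n-1}$-measurable, this yields $\E_{n-1}(M_{n})\leq M_{n-1}$, so $(M_{n})$ is a nonnegative supermartingale with $M_{0}=1$.

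Next I introduce the stopping time
\[
\tau=\inf\{n\in\N:f_{n}-f_{0}\geq r\text{ and }s_{2,n}^{2}(f)\leq b\}.
\]
Because $s_{2,n}^{2}(f)$ is $\mathscr{F}_{n-1}$-measurable (hence $\mathscr{F}_{n}$-measurable), $\tau$ is indeed a stopping time, and the event in the statement is exactly $\{\tau<\infty\}$. On this event, the very definition of $\tau$ gives $M_{\tau}\geq \exp(\lambda r-\psi(\lambda)b)$. Applying Theorem \ref{optional sampling theorem} to the bounded stopping time $\tau\wedge N$ yields $\E(M_{\tau\wedge N})\leq 1$ for every $N\in\N$, and Fatou's lemma then gives $\E(M_{\tau}\mathds{1}_{\{\tau<\infty\}})\leq 1$. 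Combining the last two observations,
\[
\Pb(\tau<\infty)\leq\exp\bigl(-\lambda r+\psi(\lambda)b\bigr),\qquad\lambda>0.
\]

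It remains to minimise the right-hand side over $\lambda>0$. Differentiating the exponent one finds the optimiser $\lambda^{*}=\log(1+r/b)$, and substituting this back produces the first inequality
\[
\Pb(\tau<\infty)\leq e^{r}\Bigl(\frac{b}{r+b}\Bigr)^{r+b}.
\]
The second inequality $\leq\exp(-r^{2}/(2(r+b)))$ reduces to the elementary calculus fact $(1+u)\log(1+u)-u\geq u^{2}/(2(1+u))$ for $u=r/b\geq 0$, which is standard.

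The genuinely new step is the supermartingale construction with the \emph{conditional} quadratic variation $s_{2,n}^{2}(f)$ in the exponent; once that is in place, the stopping-time argument and the one-variable optimisation are routine. The two-sided bound $|d_{j,f}|\leq 1$ enters only to secure the pointwise inequality $e^{\lambda x}\leq 1+\lambda x+\psi(\lambda)x^{2}$ on the relevant range of $x$, while the predictability of $s_{2,n}^{2}(f)$ is what makes the truncation embedded in the definition of $\tau$ a legitimate stopping-time construction.
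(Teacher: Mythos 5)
Your proof is correct. Note, however, that the paper offers no proof of this statement at all: Theorem \ref{Freedman inequality} is recalled as a known result and attributed to Freedman's 1975 paper \cite{Fr}, so there is no internal argument to compare against. What you have written is essentially Freedman's original route — the exponential supermartingale $\exp(\lambda(f_{n}-f_{0})-\psi(\lambda)s_{2,n}^{2}(f))$ built from the pointwise bound $e^{\lambda x}\leq 1+\lambda x+\psi(\lambda)x^{2}$ for $x\leq 1$, stopped at the first entry into the event and optimised in $\lambda$ — and every step (the predictability of $s_{2,n}^{2}(f)$ making $\tau$ a stopping time, the optional sampling plus Fatou step, the optimiser $\lambda^{*}=\log(1+r/b)$, and the elementary inequality yielding the sub-Gaussian-type bound) checks out. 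The only cosmetic remark is that just the one-sided bound $d_{j,f}\leq 1$ is needed for the pointwise inequality, and that the ``$a$'' in the paper's statement is a typo for ``$r$''.
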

In order to avoid any boundness assumption on martingale differences, De la Pe\~{n}a \cite{Vi} proved the following inequality for conditionally symmetric martingales.

\begin{thm}[De la Pe\~{n}a]\label{delapena inequality 1}
Let $f=(f_{j})_{j=0}^{\infty}$ a real-valued conditionally symmetric martingale. Then for $r,~b>0$, we have
\begin{equation}\nonumber
\Pb\{f_{n}-f_{0}\geq r,~S_{2,n}^{2}(f)\leq b~\mbox{for some }n\}\leq \exp\Big{\{}-\frac{r^{2}}{2b} \Big{\}}.
\end{equation}
\end{thm}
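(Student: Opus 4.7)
My plan is to build a one-parameter family of exponential supermartingales that exploits conditional symmetry in place of any boundedness assumption on the differences, and then to apply the Optional Sampling Theorem at a stopping time chosen to realize the two-sided event $\{f_n-f_0\geq r,\,S_{2,n}^{2}(f)\leq b\}$ simultaneously.

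First, I would invoke Proposition~\ref{Burkholder's replace filtration argument} to pass to the enlarged filtration $(\mathscr{G}_j)_{j=0}^{\infty}$ with $\mathscr{G}_j=\sigma(d_{0,f},\ldots,d_{j,f},|d_{j+1,f}|)$. Under this filtration $f$ remains a martingale, while $|d_{n,f}|$ becomes $\mathscr{G}_{n-1}$-measurable, so $S_{2,n}^{2}(f)=\sum_{j=1}^{n}|d_{j,f}|^{2}$ is $\mathscr{G}_{n-1}$-measurable. Conditional symmetry moreover forces the conditional law of $d_{n,f}$ given $\mathscr{G}_{n-1}$ to place equal mass on $+|d_{n,f}|$ and $-|d_{n,f}|$, so for every $\lambda\in\R$,
\[
\E(e^{\lambda d_{n,f}}\mid\mathscr{G}_{n-1}) \;=\; \cosh(\lambda|d_{n,f}|) \;\leq\; \exp\!\Big(\tfrac{\lambda^{2}|d_{n,f}|^{2}}{2}\Big),
\]
where the last step is the termwise bound $\cosh x\leq e^{x^{2}/2}$. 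Consequently, for each fixed $\lambda>0$ the process
\[
M_n \;\coloneqq\; \exp\!\Big(\lambda(f_n-f_0)-\tfrac{\lambda^{2}}{2}S_{2,n}^{2}(f)\Big),\qquad n\in\N_{0},
\]
is a nonnegative $(\mathscr{G}_n)$-supermartingale with $M_0=1$, since $S_{2,n}^{2}(f)$ is $\mathscr{G}_{n-1}$-measurable and hence $\E(M_n\mid\mathscr{G}_{n-1})\leq M_{n-1}$ by the cosh estimate above.

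Next I would introduce the stopping time
\[
\tau \;=\; \inf\{n\in\N_{0}\,:\,f_n-f_0\geq r\text{ and }S_{2,n}^{2}(f)\leq b\},
\]
so that $\{\tau<\infty\}$ is exactly the event in the statement. On $\{\tau<\infty\}$ we have $M_{\tau}\geq\exp(\lambda r-\tfrac{\lambda^{2}b}{2})$ by construction. Applying part~ii) of Theorem~\ref{optional sampling theorem} to the bounded stopping time $\tau\wedge N$ and then letting $N\to\infty$ via Fatou's lemma (the same maneuver used around \eqref{super-martingale inequality 1}), I obtain
\[
\exp\!\Big(\lambda r-\tfrac{\lambda^{2}b}{2}\Big)\cdot\Pb\{\tau<\infty\}\;\leq\;\E(M_0)\;=\;1.
\]
Optimizing in $\lambda$ by choosing $\lambda=r/b>0$ then gives $\Pb\{\tau<\infty\}\leq\exp(-r^{2}/(2b))$, which is precisely the stated inequality.

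The main delicate point is the supermartingale step. It requires simultaneously that $S_{2,n}^{2}(f)$ be predictable with respect to the working filtration---forcing the switch from $\mathscr{F}$ to $\mathscr{G}$---and that conditional symmetry survive this enlargement so that $\E(e^{\lambda d_{n,f}}\mid\mathscr{G}_{n-1})$ collapses cleanly to $\cosh(\lambda|d_{n,f}|)$; both features are secured by Proposition~\ref{Burkholder's replace filtration argument}. Once the exponential supermartingale is in place, the optional sampling argument and the one-variable minimization in $\lambda$ are entirely routine, and crucially no bounded-differences hypothesis is needed.
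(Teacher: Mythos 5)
Your proof is correct. Note that the paper does not actually prove Theorem~\ref{delapena inequality 1}; it is quoted from De la Pe\~{n}a's paper and used as motivation, so there is no in-paper argument to compare against. Your route --- enlarge the filtration via Proposition~\ref{Burkholder's replace filtration argument} so that $|d_{n,f}|$ (hence $S_{2,n}^{2}(f)$) becomes predictable, observe that the conditional law of $d_{n,f}$ is then the symmetric two-point law on $\pm|d_{n,f}|$ (this also follows just from the conditional mean-zero property together with the two-point support), bound $\cosh(\lambda|d_{n,f}|)\leq e^{\lambda^{2}|d_{n,f}|^{2}/2}$ to get the exponential supermartingale $M_n$, and finish with optional sampling at $\tau\wedge N$, Fatou, and optimization over $\lambda$ --- is exactly the standard De la Pe\~{n}a argument, and it is the same supermartingale-plus-stopping-time machinery the paper deploys in the proof of Theorem~\ref{Hilbert valued martingale concentration inequality} and in Step~I of Theorems~\ref{refinement of Azuma type inequality for self-normaized sums} and the final De la Pe\~{n}a-type theorem. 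All steps check out, including the measurability of $\tau$ with respect to $(\mathscr{G}_n)$ and the identification of $\{\tau<\infty\}$ with the stated event.
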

\begin{thm}[De la Pe\~{n}a]\label{delapena inequality 2}
Let $f=(f_{j})_{j=0}^{\infty}$ be a real-valued conditionally symmetric martingale. Then, for $\beta,~r,~b>0$ and $\alpha\geq 0$, the following inequality holds
\begin{equation}\nonumber
\Pb\left\{\frac{f_{n}-f_{0}}{\alpha+\beta S_{2,n}^{2}(f)}\geq r,~\frac{1}{S_{2,n}^{2}(f)}\leq b~\mbox{for some }n \right\}\leq \exp\left\{-r^{2}\big{(}\frac{\beta^{2}}{2b}+\alpha\beta\big{)} \right\}.
\end{equation}
\end{thm}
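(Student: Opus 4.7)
The plan is to build the standard exponential supermartingale associated with a conditionally symmetric martingale, pick the parameter so that the exponent becomes the desired quadratic form in $r$, and conclude via optional stopping. No Banach-space machinery is needed, since the statement concerns real-valued martingales.

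First, using Proposition \ref{Burkholder's replace filtration argument} I pass to the enlarged filtration $(\mathscr{G}_j)$ with respect to which $|d_{j,f}|$ becomes $\mathscr{G}_{j-1}$-measurable while $f$ remains a martingale. Conditional symmetry gives
\[
\E_{j-1}\exp(\lambda d_{j,f})=\E_{j-1}\cosh(\lambda d_{j,f})\le \exp\!\Big(\tfrac{\lambda^{2}}{2}|d_{j,f}|^{2}\Big),
\]
by the elementary bound $\cosh(x)\le e^{x^{2}/2}$ and the predictability of $|d_{j,f}|$. Hence, for every $\lambda>0$, the process
\[
M^{\lambda}_{n}\coloneqq \exp\!\Big(\lambda(f_{n}-f_{0})-\tfrac{\lambda^{2}}{2}S_{2,n}^{2}(f)\Big)
\]
is a nonnegative supermartingale with $\E(M_{0}^{\lambda})=1$.

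Next, I introduce the stopping time
\[
\tau=\inf\!\Big\{n\in\N_{0}: \tfrac{f_{n}-f_{0}}{\alpha+\beta S_{2,n}^{2}(f)}\ge r,\ \tfrac{1}{S_{2,n}^{2}(f)}\le b\Big\},
\]
so that the event in the theorem equals $\{\tau<\infty\}$. I choose $\lambda=r\beta$. On $\{\tau<\infty\}$ one has $f_{\tau}-f_{0}\ge r\alpha+r\beta S_{2,\tau}^{2}(f)$ and $S_{2,\tau}^{2}(f)\ge 1/b$, so a direct computation gives
\[
\lambda(f_{\tau}-f_{0})-\tfrac{\lambda^{2}}{2}S_{2,\tau}^{2}(f)\ge r^{2}\alpha\beta+\tfrac{r^{2}\beta^{2}}{2}S_{2,\tau}^{2}(f)\ge r^{2}\!\left(\alpha\beta+\tfrac{\beta^{2}}{2b}\right),
\]
i.e.\ $M_{\tau}^{\lambda}\ge \exp\!\big(r^{2}(\alpha\beta+\beta^{2}/(2b))\big)$ on $\{\tau<\infty\}$. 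Combining Theorem \ref{optional sampling theorem}(ii) with Fatou's lemma yields $\E(M_{\tau}^{\lambda}\mathds{1}_{\{\tau<\infty\}})\le 1$, which immediately gives
\[
\Pb\{\tau<\infty\}\le \exp\!\Big(-r^{2}\Big(\tfrac{\beta^{2}}{2b}+\alpha\beta\Big)\Big),
\]
as claimed.

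The only real subtlety, which I would write out carefully, is the verification that the choice $\lambda=r\beta$ is the exact minimizer of the exponent $\tfrac{\lambda^{2}}{2b}-\lambda r\beta$ appearing after we discard the non-negative remainder $r^{2}\alpha\beta$; any other choice would recover a weaker version of the same bound. Everything else (construction of $M^{\lambda}$, the conditional-symmetry step, and the stopping-time argument) is entirely standard, so I do not anticipate a technical obstacle beyond bookkeeping.
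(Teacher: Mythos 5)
Your proof is correct, and it is worth noting that the paper does not actually prove Theorem \ref{delapena inequality 2} at all --- it is quoted from De la Pe\~{n}a \cite{Vi} as a known result, so the only in-paper analogue of your argument is Step I of the proof of the final theorem (the $p$-uniformly smooth extension). There the author works with the norm $|f_n|$, builds the supermartingale $\cosh(\lambda|f_n|)/\prod_j\exp\{\lambda^2|d_{j,f}|^2/2\}$, and then splits off the exponent via Cauchy--Schwarz, which costs a factor $2$ from $\cosh(x)\ge e^{x}/2$ and another factor $2$ from the square root, hence the constant $4$. Your route exploits the fact that the real-valued statement is one-sided: conditional symmetry plus predictability of $|d_{j,f}|$ under the Burkholder filtration give $\E_{j-1}e^{\lambda d_{j,f}}=\cosh(\lambda|d_{j,f}|)\le e^{\lambda^2|d_{j,f}|^2/2}$ directly, so $M_n^{\lambda}$ is a genuine nonnegative supermartingale for the signed sum and no Cauchy--Schwarz step is needed; this is exactly what recovers the sharp constant $1$ in the statement. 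The chain of inequalities with $\lambda=r\beta$ and $S_{2,\tau}^2(f)\ge 1/b$ is right. The one blemish is your closing aside: $\lambda=r\beta$ is not the minimizer of $\tfrac{\lambda^2}{2b}-\lambda r\beta$ (that would be $\lambda=r\beta b$); rather, $\lambda=r\beta$ is the choice that maximizes the coefficient $\lambda r\beta-\lambda^2/2$ multiplying $S_{2,\tau}^2(f)$ before one invokes $S_{2,\tau}^2(f)\ge 1/b$. This remark is not used in the argument, so the proof stands, but you should either correct or delete it.
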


By an argument from Pinelis \cite{Pin2} one can extend the Freedman inequality (Theorem \ref{Freedman inequality}) and the De la Pe\~{n}a inequality (Theorem \ref{delapena inequality 1}) to $2$-uniformly smooth space valued martingales. Moreover, by applying the ``martingale dimension reduction'' argument (Lemma \ref{martingale dimension reduction lemma}) one can further derive that Theorem \ref{Freedman inequality} and Theorem \ref{delapena inequality 1} hold true for martingales with values in $p$-uniformly smooth space by replacing $S_{2,n}^{2}(f)$ to $S_{p,n}^{p}(f)$ of the left hand side and $r$, $b$ to $r^{p/2}$, $Kb$ of the right hand side respectively with $K=O(s_{p}(X))$.

We now conclude this paper by extending the De la Pe\~{n}a inequality (Theorem \ref{delapena inequality 2}) to $p$-uniformly smooth space-valued conditionally symmetric martingales.

\begin{thm}
Let $f=(f_{j})_{j=0}^{\infty}$ be an $X$-valued conditionally symmetric martingale \big{(}relatively to the filtration $(\mathscr{F}_{j})_{j=0}^{\infty}$\big{)}, where $X$ is a $p$-uniformly smooth space ($1<p\leq 2$). Then, for each $\beta,~r,~b>0$ and $\alpha\geq 0$ the following inequality holds true,
\begin{equation}\label{p-delapena inequality}
\Pb\left\{\frac{\|f_{n}-f_{0}\|}{\big{(}\!\alpha\!+\!\beta S_{p,n}^{p}(f)\!\big{)}^{2/p}}\!\geq\! r,~\frac{1}{S_{p,n}^{p}(f)}\!\leq b~\mbox{for some }n \right\}\!\leq\! 4\!\exp\!
\left\{\!-\!r^{p}\big{(}\!\frac{\beta^{2}}{2bK}\!+\!\frac{\alpha\beta}{K}\!\big{)}\!\right\},
\end{equation}
where $K=O(s_{p}(X))$.
\end{thm}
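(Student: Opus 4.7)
The plan is to extend the proof strategy of Theorem \ref{refinement of Azuma type inequality for self-normaized sums}: combine the martingale dimension reduction of Lemma \ref{martingale dimension reduction lemma} with the classical De la Pe\~{n}a exponential supermartingale argument, adapted to $\R^{2}$-valued conditionally symmetric martingales.

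First I would apply Proposition \ref{Burkholder's replace filtration argument} to make $(\|d_{j,f}\|)_{j\geq 1}$ predictable, and then invoke Lemma \ref{martingale dimension reduction lemma} to obtain an $\R^{2}$-valued martingale $N=(N_{j})_{j=0}^{\infty}$ with (i) $\|f_{j}-f_{0}\|^{p}\leq|N_{j}-N_{0}|^{2}$ and (ii) $|d_{j,N}|^{2}\leq K\|d_{j,f}\|^{p}$, where $K=O(s_{p}(X))$; summing (ii) yields $S_{2,n}^{2}(N)\leq K\,S_{p,n}^{p}(f)$. From the explicit construction, $d_{j,N}$ is the sum of a factor which is an odd function of $d_{j,f}$ and a Rademacher-weighted predictable term, so conditional symmetry of $f$ is inherited by $N$. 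A further application of Proposition \ref{Burkholder's replace filtration argument} then provides a filtration $\mathscr{G}$ in which $N$ is a martingale, $|d_{j,N}|$ is $\mathscr{G}_{j-1}$-measurable, and events of the form $\{\|f_{n}-f_{0}\|\geq c_{n}\}$ and $\{S_{p,n}^{p}(f)\geq c'_{n}\}$ remain adapted.

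Second, I would reproduce the Mittag-Leffler/Gronwall computation from the proof of Theorem \ref{Hilbert valued martingale concentration inequality} keeping $|d_{j,N}|^{2}$ (now predictable) in place of $w_{j}^{2}$; this shows that
\[
M_{n}:=\cosh\bigl(\lambda|N_{n}-N_{0}|\bigr)\exp\Bigl\{-\tfrac{\lambda^{2}}{2}S_{2,n}^{2}(N)\Bigr\}
\]
is a nonnegative $\mathscr{G}$-supermartingale with $M_{0}=1$. Define the stopping time
\[
\tau=\inf\Bigl\{n\in\N:\,\|f_{n}-f_{0}\|\geq r(\alpha+\beta S_{p,n}^{p}(f))^{2/p}\ \text{and}\ S_{p,n}^{p}(f)\geq 1/b\Bigr\},
\]
whose complement in $\N\cup\{\infty\}$ is exactly the absence of the event in \eqref{p-delapena inequality}. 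Optional sampling plus Fatou gives $\E[M_{\tau}\mathds{1}_{\{\tau<\infty\}}]\leq 1$; using $\cosh(x)\geq e^{x}/2$ for $x\geq 0$, this yields $\E\bigl[e^{\lambda|N_{\tau}|-\lambda^{2}S_{2,\tau}^{2}(N)/2}\mathds{1}_{\{\tau<\infty\}}\bigr]\leq 2$. On $\{\tau<\infty\}$ (i) gives $|N_{\tau}|\geq r^{p/2}(\alpha+\beta S_{p,\tau}^{p}(f))$ and (ii) gives $S_{2,\tau}^{2}(N)\leq KS_{p,\tau}^{p}(f)$, hence with $\lambda=r^{p/2}\beta/K$,
\[
\lambda|N_{\tau}|-\tfrac{\lambda^{2}}{2}S_{2,\tau}^{2}(N)\geq \tfrac{r^{p}\alpha\beta}{K}+\tfrac{r^{p}\beta^{2}S_{p,\tau}^{p}(f)}{2K}\geq \tfrac{r^{p}\alpha\beta}{K}+\tfrac{r^{p}\beta^{2}}{2Kb}.
\]
Markov's inequality on the left-hand side of the preceding exponential bound delivers \eqref{p-delapena inequality}, in fact with constant $2$ in place of $4$.

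The main obstacle I anticipate is the filtration bookkeeping: $N$ lives on a Rademacher-enlarged probability space, and one must verify simultaneously that $|d_{j,N}|$ can be made predictable in a filtration that still sees the events defining $\tau$, and that the exponential supermartingale property of $M_{n}$ really survives under that enlargement. Verifying conditional symmetry of $N$ from the explicit construction in Lemma \ref{martingale dimension reduction lemma}, while morally clear, is a short but essential additional check.
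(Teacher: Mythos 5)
Your overall architecture (reduce to $\R^{2}$ via Lemma \ref{martingale dimension reduction lemma}, then run an exponential supermartingale plus optional sampling argument) matches the paper's, but your treatment of the key estimate is genuinely different and, once one detail is repaired, cleaner. The paper's Step I uses De la Pe\~{n}a's Cauchy--Schwarz self-normalization trick on $\cosh(\tfrac{\lambda}{2}|f_{\sigma}|)$, which produces a bound of the form $\Pb(B)\le 2(\cdots)^{1/2}\Pb(B)^{1/2}$ and hence the constant $4$; in Step II it also needs the extra lower bound $|d_{j,N}|^{2}\ge c\|d_{j,f}\|^{p}$ (the paper's \eqref{addition inequality}) in order to convert the condition $1/S_{p,n}^{p}(f)\le b$ into a condition on $S_{2,n}^{2}(N)$. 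You instead fix the deterministic $\lambda=r^{p/2}\beta/K$ and absorb $\tfrac{\lambda^{2}}{2}S_{2,\tau}^{2}(N)\le\tfrac{\lambda^{2}K}{2}S_{p,\tau}^{p}(f)$ into half of the term $\lambda r^{p/2}\beta S_{p,\tau}^{p}(f)$ coming from $|N_{\tau}|\ge r^{p/2}(\alpha+\beta S_{p,\tau}^{p}(f))$; this computation is correct, dispenses with both the Cauchy--Schwarz step and the lower bound \eqref{addition inequality}, and yields the stated inequality with constant $2$ in place of $4$.

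The one point that needs fixing is your claim that $|d_{j,N}|$ is, or can be made, predictable. It is not predictable in the Rademacher-enlarged filtration, since $|d_{j,N}|^{2}$ contains the term $\tfrac{p^{2}}{4}\langle J_{p}(f_{j-1}),d_{j,f}\rangle^{2}\mathds{1}_{A_{j-1}}/|N_{j-1}|^{2}$, which depends on $d_{j,f}$; and the route you sketch (conditional symmetry of $N$ followed by a second application of Proposition \ref{Burkholder's replace filtration argument}) would land you in a filtration generated by the $d_{j,N}$, in which $\tau$ --- defined through $f_{n}$ and $S_{p,n}^{p}(f)$ --- is no longer obviously a stopping time; this is exactly the bookkeeping danger you flag yourself. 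The repair is to avoid that detour entirely: after the single application of Proposition \ref{Burkholder's replace filtration argument} to $f$, the sequence $w_{j}=\|d_{j,f}\|$ is predictable in the enlarged filtration and $|d_{j,N}|^{2}\le Kw_{j}^{p}$, so the proof of Theorem \ref{Hilbert valued martingale concentration inequality} makes $\cosh(\lambda|N_{n}-N_{0}|)\exp\{-\tfrac{\lambda^{2}K}{2}S_{p,n}^{p}(f)\}$ a nonnegative supermartingale for a filtration in which $\tau$ is a stopping time. Since your final inequality chain only ever uses $S_{2,\tau}^{2}(N)\le KS_{p,\tau}^{p}(f)$, substituting this predictable majorant changes nothing downstream, and the argument closes.
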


\begin{proof}
We prove the inequality for $\HS$-valued conditionally symmetric martingales firstly and then apply the ``martingale dimension reduction'' argument (Lemma \ref{martingale dimension reduction lemma}) to prove the case for $p$-uniformly smooth space-valued martingales.

Without loss of generality, we assume that $f_{0}=0~a.s.$ throughout the following proof.

{\bf Step {\uppercase\expandafter{\romannumeral1}}.} Suppose that $f=(f_{j})_{j=0}^{\infty}$ is a $\HS$-valued conditionally symmetric martingale \big{(}relatively to the filtration $(\mathscr{F}_{j})_{j=0}^{\infty}$\big{)}, then applying
Proposition \ref{Burkholder's replace filtration argument} and \eqref{super-martingale} of Theorem \ref{Hilbert valued martingale concentration inequality}, there exists another filtration $(\mathscr{G}_{j})_{j=0}^{\infty}$ such that $f=(f_{j})_{j=0}^{\infty}$ is a martingale and $g=(g_{j})_{j=0}^{\infty}$ is a nonnegative super-martingale relatively to the filtration $(\mathscr{G}_{j})_{j=0}^{\infty}$, where
\begin{equation}\nonumber
g_{n}=\frac{\cosh\lambda|f_{n}|}{\prod\limits_{j=1}^{n}\exp\{\frac{\lambda^{2}|d_{j,f}|^{2}}{2}\}},~~n\in\N_{0}~\mbox{and}~\lambda>0,
\end{equation}

Note that Hilbert space is $2$-uniformly smooth, hence we now prove inequality \eqref{p-delapena inequality} for $p=2$. Indeed, let $B=\left\{\frac{|f_{n}|}{\alpha+\beta S_{2,n}^{2}(f)}\geq r,~\frac{1}{S_{2,n}^{2}(f)}\leq b~\mbox{for some }n \right\}$ and define $\sigma=\inf\left\{n\in\N_{0}:\frac{|f_{n}|}{\alpha+\beta S_{2,n}^{2}(f)}\geq r,~\frac{1}{S_{2,n}^{2}(f)}\leq b\right\}$. It is clear that $\sigma$ is a stopping time satisfying
\[
B=\left\{\frac{|f_{n}|}{\alpha+\beta S_{2,n}^{2}(f)}\geq r,~\frac{1}{S_{2,n}^{2}(f)}\leq b~\mbox{for some }n \right\}=\{\sigma<\infty\}.
\]
Then,
\begin{equation}\nonumber
\begin{split}
\Pb(B)&\leq 2\int \frac{\cosh\frac{\lambda}{2}|f_{\sigma}|}{\exp\big{\{}\frac{\lambda r}{2}(\alpha+\beta S_{2,\sigma}^{2}(f))\big{\}}}\cdot\mathds{1}_{\{\sigma<\infty\}}d\Pb\\
          &=2\! \int\!\frac{\cosh\frac{\lambda}{2}|f_{\sigma}|}{\prod\limits_{j=0}^{n}\!\exp\big{\{}\!\frac{\lambda^{2}|d_{\sigma,f}|^{2}}{4}\!\big{\}}}\!\cdot\!\exp\!\left\{\!\frac{\lambda^{2}S_{2,\sigma}^{2}(f)}{4}\!-\!\frac{\lambda r}{2}(\alpha\!+\!\beta S_{s,\sigma}^{2}(f))\!\right\}\!\cdot\!\mathds{1}_{\{\sigma<\infty\}} d\Pb,
\end{split}
\end{equation}
for all $\lambda>0$.

By the Cauchy-Schwarz inequality and the fact that $(g_{j})_{j=0}^{\infty}$ is a nonnegative super-martingale with $\E(g_{0})=1$, we have
\begin{equation}\nonumber
\begin{split}
\Pb(B)&\leq 2\inf\limits_{\lambda>0}\left(\int \exp\big{\{}\frac{\lambda^{2}S_{2,\sigma}^{2}(f)}{2}-(\alpha+\beta S_{2,\sigma}^{2}(f))\lambda r\big{\}} \cdot\mathds{1}_{B}d \Pb \right)^{\frac{1}{2}}\\
          &\leq 2 \left(\int \exp\big{\{}-\frac{r^{2}(\alpha+\beta S_{2,\sigma}^{2}(f))^{2}}{2 S_{2,n}^{2}(f)}\big{\}}\cdot\mathds{1}_{B}d\Pb \right)^{\frac{1}{2}}\\
          &\leq 2 \left(\int\exp\big{\{}-\alpha\beta r^{2}-\frac{r^{2}\beta^{2}S_{2,\sigma}^{2}(f)}{2} \big{\}}\cdot\mathds{1}_{B}d\Pb \right)^{\frac{1}{2}}\\
          &\leq 2 \left\{\exp-r^{2}(\alpha\beta+\frac{\beta^{2}}{2b}) \right\}^{\frac{1}{2}}\cdot \Pb(B)^{\frac{1}{2}}.
\end{split}
\end{equation}
Therefore,
\[
\begin{split}
\Pb(B)&=\Pb\left\{\frac{|f_{n}|}{\alpha+\beta S_{2,n}^{2}(f)}\geq r,~\frac{1}{S_{2,n}^{2}(f)}\leq b~\mbox{for some }n \right\}\\
      &\leq 4\exp\left\{-r^{2}(\frac{\beta^{2}}{2b}+\alpha\beta) \right\}.
\end{split}
\]

{\bf Step {\uppercase\expandafter{\romannumeral2}}.} We now turn to the case when $f=(f_{j})_{j=0}^{\infty}$ is a conditionally symmetric martingale with values in $p$-uniformly smooth space. By Lemma \ref{martingale dimension reduction lemma} there exists an $\R^{2}$-valued martingale $N=(N_{j})_{j=0}^{\infty}$ (relatively to another filtration) such that
\begin{enumerate}[i)]
\item $\|f_{j}\|^{p}\leq |N_{j}|^{2}$,
\item $|N_{j}-N_{j-1}|^{2}\leq K_{1}\|d_{j,f}\|^{p}$,
\end{enumerate}
for all $j\in\N$ with $K_{1}=O(s_{p}(X))$.

Furthermore, by the definition of $(N_{j})_{j=0}^{\infty}$ (see \eqref{definition of martingale}), we have for $j\in \N$
\begin{equation}\label{addition inequality}
\begin{split}
|N_{j}-N_{j-1}|^{2}&=\frac{p^{2}}{4}\cdot\frac{\langle J_{p}(f_{j-1}),f_{j}-f_{j-1}\rangle^{2}\cdot\mathds{1}_{A_{j-1}}}{|N_{j-1}|^{2}}+(c+p)\|f_{j}-f_{j-1}\|^{p}\\
                         &\geq (c+p)\|f_{j}-f_{j-1}\|^{p}-\frac{p^{2}}{4}\cdot\frac{\|f_{j}-f_{j-1}\|^{2}\cdot \mathds{1}_{A_{j-1}}}{\big{\|}N_{j-1}\big{\|}_{2}^{2(1-\frac{2}{p})}}\\
                         &\geq (c+p-\frac{p^{2}}{4})\|f_{j}-f_{j-1}\|^{p}\\
                         &\geq c\|f_{j}-f_{j-1}\|^{p},
\end{split}
\end{equation}
where $c=O(s_{p}(X))$. The last inequality of \eqref{addition inequality} follows from the fact that $1<p\leq 2$.

Then, by using $i)$, $ii)$ and \eqref{addition inequality}, we have
\begin{equation}\nonumber
\begin{split}
                    &\Pb\left\{\frac{\|f_{n}\|}{\big{(}\alpha+\beta S_{p,n}^{p}(f)\big{)}^{2/p}}\geq r,~\frac{1}{S_{p,n}^{p}(f)}\leq b~\mbox{for some }n \right\}\\
                  =&\Pb\left\{\frac{\|f_{n}\|^{\frac{p}{2}}}{\alpha+\frac{\beta}{K_{1}}K_{1} S_{p,n}^{p}(f)}\geq r^{\frac{p}{2}},~\frac{1}{c S_{p,n}^{p}(f)}\leq\frac{b}{c}~\mbox{for some }n \right\}\\
               \leq&\Pb\left\{\frac{|N_{n}|}{\alpha+\frac{\beta}{K_{1}}S_{2,n}^{2}(N)}\geq r^{\frac{p}{2}},~\frac{1}{S_{2,n}^{2}(N)}\leq\frac{b}{c}~\mbox{for some }n \right\}\\
               \leq& 4\exp\left\{-r^{p}\big{(}\frac{\beta^{2}}{2Kb}+\frac{\alpha\beta}{K} \big{)} \right\},
\end{split}
\end{equation}
where $K=O(s_{p}(X))$.
\end{proof}

\section*{Acknowledgements}
The author is partially supported by Natural Science Foundation of China (Grant No. 12071240). I would like to thank the Yau Mathematical Sciences Center of Tsinghua University for providing a great opportunity for me to complete this work. And I am grateful to the Associate Editor and the referees for their consideration.

\end{document}